\pgfplotsset{width=7cm,compat=1.18}
 \tikzset{
  on each segment/.style={
    decorate,
    decoration={
      show path construction,
      moveto code={},
      lineto code={
        \path [#1]
        (\tikzinputsegmentfirst) -- (\tikzinputsegmentlast);
      },
      curveto code={
        \path [#1] (\tikzinputsegmentfirst)
        .. controls
        (\tikzinputsegmentsupporta) and (\tikzinputsegmentsupportb)
        ..
        (\tikzinputsegmentlast);
      },
      closepath code={
        \path [#1]
        (\tikzinputsegmentfirst) -- (\tikzinputsegmentlast);
      },
    },
  },
  mid arrow/.style={postaction={decorate,decoration={
        markings,
        mark=at position .5 with {\arrow[#1]{stealth}}
      }}},
}
\numberwithin{equation}{section}
\theoremstyle{plain}
\newtheorem{theorem}{Theorem}[section]
\newtheorem{lemma}[theorem]{Lemma}
\newtheorem{corollary}[theorem]{Corollary}
\newtheorem{remark}{Remark}[section]
\theoremstyle{remark}
\newtheorem*{example}{Example}
\title{Three-Parameter Approximations of  Sums of Locally Dependent Random Variables via Stein's Method}
\author{
  Zhonggen Su, Xiaolin Wang \\
  School of Mathematical Sciences, Zhejiang University \\
  Hangzhou{\rm310058}, China\\
\texttt{\{suzhonggen, 12035020\}@zju.edu.cn}}
\begin{document}
\maketitle
\begin{abstract}
Let $\{X_{i}, i\in J\}$ be a  family of locally dependent non-negative integer-valued random variables with finite expectations and variances. We consider the sum $W=\sum_{i\in J}X_i$ and use Stein's method to establish general upper error bounds for the total variation distance $d_{TV}(W, M)$, where $M$ represents a three-parameter random variable. As a direct consequence, we obtain a  discretized normal approximation for $W$. As applications, we study in detail four well-known examples, which are counting vertices of all edges point inward, birthday problem, counting monochromatic edges in uniformly colored graphs, and triangles in the Erd\H{o}s-R\'{e}nyi random graph. Through delicate analysis and computations, we obtain sharper upper error bounds than existing results.
\end{abstract}
\keywords{Erd\H{o}s-R\'{e}nyi random graph; Local dependence; Monochromatic edges; Stein's method; Three-parameter distribution.}
\textbf{MSC(2020)}: 60F05, 60B10
\section{Introduction}\label{S2}
Locally dependent random variables appear widely in many applied fields, such as risk theory, extreme value theory, reliability theory, run and scan statistics, and graph theory. For related background, the interested reader is referred to \cite{barbour1992poisson,goovaerts1996compound,rollin2008symmetric, smith1988extreme,wainer1996reliability}. The purpose of this paper is to provide three-parameter approximations for sums of locally dependent random variables and to closely examine a few well-known examples.

Let  $\{X_{i}, i\in J\}$ be a family of discrete non-negative integer-valued random variables, where $J$ is an index set. Assume that $\{X_{i}, i\in J\}$
satisfies the following local dependence structure:

 (LD1)  For each $i \in J$, there exists an set $A_{i} \subset J$ such that $X_{i}$ is independent of $\{X_{j} : j \notin A_{i}\}$.

 (LD2)  For each $i \in J$ and $j \in A_{i},$ there exists an set $A_{ij} \supset A_{i}$ such that $\{X_{i}, X_{j}\}$ is independent of $\{X_{k} : k \notin A_{ij}\}$.

 (LD3)   For each $i \in J$, $j \in A_{i}$, and $k \in A_{ij}$, there exists an set $A_{ijk} \supset A_{ij}$ such that $\{X_{i}, X_{j}, X_{k}\}$ is independent of $\{X_{l} : l \notin A_{ijk}\}$.

 \noindent
We remark that such a  dependence structure follows  Ross \cite{ross2011fundamentals} and was also adopted by  Fang \cite{fang2019wasserstein}  and Liu  and Austern \cite{liu2022wasserstein}. Let $W=\sum_{i\in J}X_{i}$, and assume  that ${\rm E}W^2$ exists. Set $\mu={\rm E}W$ and $\sigma^{2}=\operatorname{Var}W$.
Our aim is to present a general approach for obtaining an effective approximation of $W$ by appropriate discrete random variables with three parameters, in terms of the total variation distance and the local distance. Recall that the total variation distance and the local distance between two integer-valued random variables $U$ and $V$ are defined as
 $$d_\mathrm{T V}(U, V)=\sup _{A \subseteq \mathbb{Z}}|{\rm P}(U \in A)-{\rm P}(V \in A)| $$
 and
$$d_{\mathrm{loc}}(U, V)=\sup _{a \in \mathbb{Z}}|{\rm P}(U=a)-{\rm P}(V=a)|.$$

Let $F$ and $G$ be two probability distribution functions, and denote by $F*G$ their convolution, namely, $F*G(z)=\int_{\mathbb{R}} F(z-x)dG(x)$. The major task of this paper is to  control the approximation error bounds $d_\mathrm{TV}(W, M_i)$, $i=1, 2, 3$, where    random variables  $M_i$  with three parameters are defined as follows:
\begin{eqnarray}
M_{1}\sim B(n,p)*\mathcal{P}(\lambda), \quad M_{2}\sim NB(r,p)* \mathcal{P}(\lambda), \quad M_{3}\sim \mathcal{P}(\lambda)* 2\mathcal{P}\bigg(\frac \omega 2\bigg)* 3\mathcal{P}\bigg(\frac \eta 3\bigg).\label{MMMM}
\end{eqnarray}
Here and in the sequel,  $X\sim F$ means that $X$ is a random variable with distribution $F$.

A key step is to properly select the parameters in (\ref{MMMM}). Our guiding principle is to ensure that the first three factorial cumulants of $M_{i}$ are equal (or approximately equal) to those of $W$. This allows us to effectively control $d_{\mathrm{TV}}(W, M_{i})$ as desired.

Specifically, given a  non-negative integer-valued random variable $U$, let $\phi_{U}(z)$ denote its probability generating function (PGF), i.e., $\phi_{U}(z)={\rm E}z^U$, and  the $j$-th factorial cumulant  $\Gamma_{j}(U)$  is defined as
\begin{equation*}\label{defff}
  \Gamma_{j}(U)=\frac{d^{j}}{d z^{j}} \log \phi_{U}(z)\bigg|_{z=1}.
 \end{equation*}
Let $m_{i}$ denote   the $i$-th  moment  of $W$. Then,  it is immediate that
\begin{eqnarray*}\label{wkk1}
 \Gamma_{1}(W)=m_{1},\quad \Gamma_{2}(W)= m_{2}-m_{1}^{2}-m_{1},
\end{eqnarray*}
and
\begin{eqnarray*}\label{wkk2}
 \Gamma_{3}(W)=m_{3}-3m_{1}m_{2}-3m_{2}+2m_{1}^{3}+3m_{1}^{2}+2m_{1}.
\end{eqnarray*}
Furthermore, some simple calculations yield
\begin{align}
    & \Gamma_{1}(M_{1})=np+\lambda, \quad\Gamma_{2}(M_{1})=-np^{2},\quad\Gamma_{3}(M_{1})=2np^{3},\label{k1}\\
    & \Gamma_{1}(M_{2})=\frac{rq}{p}+\lambda,\quad
    \Gamma_{2}(M_{2})=\frac{rq^{2}}{p^{2}},\quad
    \Gamma_{3}(M_{2})=2\frac{rq^{3}}{p^{3}}, \label{k2}\\
    & \Gamma_{1}(M_{3})=\lambda+\omega+\eta, \quad\Gamma_{2}(M_{3})=\omega+2\eta,\quad \Gamma_{3}(M_{3})=2\eta.\label{k3}
\end{align}

Now, consider the following system  of equations:
\begin{eqnarray}\label{G1t}
\left\{\!
\begin{array}{l}
    \Gamma_{1}(W)=\Gamma_{1}(M_{i}),\\
     \Gamma_{2}(W)=\Gamma_{2}(M_{i}),\\
   \Gamma_{3}(W)=\Gamma_{3}(M_{i}).
   \end{array}
    \right.
\end{eqnarray}
  Using (\ref{k1})--(\ref{k3}) and solving   (\ref{G1t}),  simple but lengthy calculations yield
\begin{align}\label{K1}
     &n=  \bigg\lfloor\frac{{4\Gamma_{2}(W)}^{3}}{{\Gamma_{3}(W)}^{2}}\bigg\rfloor, \quad p=-\frac{\Gamma_{3}(W)}{2\Gamma_{2}(W)}, \quad q=1-p,\quad \lambda=\mu-np, \quad \delta=\frac{{4\Gamma_{2}(W)}^{3}}{{\Gamma_{3}(W)}^{2}}-n,\\\label{K2}
     &
     r=\frac{4\Gamma_{2}(W)^{3}}{\Gamma_{3}(W)^{2}},  \quad p = \frac{2\Gamma_{2}(W)}{2\Gamma_{2}(W) +\Gamma_{3}(W)}, \quad q=1-p,\quad \lambda =\mu - \frac{rq}{p},\\\label{K3}
     &  \lambda=\Gamma_{1}(W)-\Gamma_{2}(W)+\frac{1}{2}\Gamma_{3}(W), \quad \omega=\Gamma_{2}(W)-\Gamma_{3}(W), \quad \eta=\frac{1}{2}\Gamma_{3}(W).
\end{align}
Note that $n$ in (\ref{K1}) may not be an integer, where we   take its integer part.

Another technical issue is:  which $M_i$ is selected to approximate the target variable $W$? It turns out to depend on the size of $\Gamma_{2}(W)$. Specifically, we consider three cases separately: (1) select $M_1$ if $\Gamma_{2}(W)> 0$;  (2) select $M_2$ if $\Gamma_{2}(W)< 0$; (3)  select $M_{3}$ if $\Gamma_{2}(W)/\mu\approx 0$. See Remark \ref{NR1} below for more details. We are now ready to state our main results. Some additional notations are needed. Set $ X_{A_{ijk}}=\{X_{l},l\in A_{ijk}\}$, and define
\begin{align*}
 &S_{2}(W)=\sum_{k=0}^{\infty} | {\rm P}(W=k)-2{\rm P}(W=k-1)+{\rm P}(W=k-2) |,  \notag\\
&
 S_{2}(W|X_{A_{ijk}})=\sum_{k=0}^{\infty} | {\rm P}(W=k|X_{A_{ijk}})-2{\rm P}(W=k-1|X_{A_{ijk}})+{\rm P}(W=k-2|X_{A_{ijk}}) |,  \notag\end{align*}
 and
\begin{align}
 S_{i,j,k}(W) =\operatorname{esssup}S_{2}(W|X_{A_{ijk}}),\quad
 S(W)=\sup _{i,j,k}S_{i,j,k}(W),
\end{align}
where $\operatorname{esssup}X$ denotes the essential supremum of $X$.

Also, define
\begin{align*}
\gamma_{i}&=\sum_{j \in A_{i}\backslash\{i\}}  \sum_{k \in A_{ij}}\sum_{l \in A_{ijk}}  \sum_{({\rm E})}{\rm E}X_{i}({\rm E})X_{j}{\rm E}X_{k}({\rm E})X_{l}\nonumber\\
     &\quad\,+\sum_{j \in A_{i}\backslash\{i\}}  \sum_{k \in A_{ij}\backslash A_{i}}\sum_{l \in A_{ijk}}  \sum_{({\rm E})}{\rm E}X_{i}({\rm E})X_{j}X_{k}({\rm E})X_{l},\nonumber\\
      \gamma&=\sum_{i\in J}\gamma_{i},
 \end{align*}
where $({\rm E})X_{i}$ stands for $X_{i}$ or ${\rm E}X_{i}$, and $\sum _{({\rm E})}$ denotes the sum over all possible choices of ${\rm E}$ in front of each $X_{i}$. For example,
 \begin{align*}
    \sum_{({\rm E})}{\rm E}X_{i}({\rm E})X_{j}({\rm E})X_{k}={\rm E}X_{i}X_{j}X_{k}+{\rm E}X_{i}X_{j}{\rm E}X_{k}+{\rm E}X_{i}{\rm E}X_{j}X_{k}+{\rm E}X_{i}{\rm E}X_{j}{\rm E}X_{k}.
\end{align*}

 \begin{theorem}\label{main} Use the above setting and notations.

{\rm(i)} Assume $\mu > \sigma^{2}$ and $2 \lambda  < q\lfloor n+ \lambda/p\rfloor$, where $\{n,p,q,\lambda\}$ is given in  \eqref{K1}. Then,  we have
$$d_\mathrm{TV}(W,M_{1})\leq C\frac{\gamma S(W)+p^2}{(1-2\theta_{1})q\mu},$$
where $\theta_{1} =\lambda/(\lfloor n+\lambda/p\rfloor q)$.

{\rm(ii)} Assume $\mu < \sigma^{2}$ and $2 \lambda q < rq+\lambda p $, where $\{r,p,q,\lambda\}$ is given in \eqref{K2}.
 Then,  we have
$$d_\mathrm{TV}(W,M_{2})\leq  C \bigg(1\vee \frac{q}{p}\bigg )\frac{\gamma S(W)}{(1-2\theta_{2})\mu},$$
where $\theta_{2} =\lambda q/(rq+\lambda p)$.

{\rm(iii)} Assume $\mu/\sigma^{2} \approx 1$ and $2 \omega+4\eta < \lambda+\omega+\eta$, where $\{\lambda, \omega, \eta\}$ is given in \eqref{K3}. Then,  we have
$$d_\mathrm{TV}(W,M_{3})\leq C\frac{\gamma S(W)}{(1-2\theta_{3})\mu},$$
where $\theta_{3} = (\omega+2\eta)/(\lambda+\omega+\eta)$.

Here and in the sequel, $C$ stands for a numerical constant whose value may differ from line to line.
\end{theorem}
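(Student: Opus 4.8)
The triangle inequality $d_{TV}(W,Y^d)\le d_{TV}(W,M_i)+d_{TV}(M_i,Y^d)$ is already recorded, so the ``moreover'' line reduces to showing $d_{TV}(M_i,Y^d)\le C/\sigma$, which is the soft half. Each $M_i$ in \eqref{MMMM} is an explicit finite convolution of classical lattice laws (binomial, negative binomial, Poisson, and the $2$- and $3$-dilations of Poisson), so a local limit theorem for such convolutions --- equivalently, a Stein argument for the discretized Gaussian applied directly to the nice law $M_i$ --- yields $d_{TV}\big(M_i,Y^d(\mathbb E M_i,\operatorname{Var}M_i)\big)\le C/\sqrt{\operatorname{Var}M_i}$. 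The parameter choices \eqref{K1}--\eqref{K3} force $\Gamma_1(M_i)=\mu$ exactly and $\Gamma_2(M_i)=\Gamma_2(W)$ up to the rounding error $\delta p^2<1$ (present only in case (i)); since $\operatorname{Var}(U)=\Gamma_2(U)+\Gamma_1(U)$ this gives $\mathbb E M_i=\mu$ and $\operatorname{Var}M_i=\sigma^2+O(1)$, so $Y^d(\mathbb E M_i,\operatorname{Var}M_i)$ is within $O(1/\sigma)$ of $Y^d(\mu,\sigma^2)$ in total variation. Hence $d_{TV}(M_i,Y^d)\le C/\sigma$ and the final inequality follow.

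The real content is the three bounds on $d_{TV}(W,M_i)$, and the plan is Stein's method with a tailor-made operator. Since $\phi_{M_i}$ satisfies a first-order linear ODE with polynomial coefficients --- immediate from the generating functions of the building blocks in \eqref{MMMM} --- one reads off a Stein operator $\mathcal A_i$, a finite linear combination of the maps $g\mapsto k\,g(k+a)$ and $g\mapsto g(k+a)$ over a few small integers $a$, characterised by $\mathbb E[\mathcal A_i g(M_i)]=0$ for all bounded $g$. For $A\subseteq\mathbb Z$ let $g_A$ solve $\mathcal A_i g_A=\mathbbm 1_A-\mathbb P(M_i\in A)$, so that $d_{TV}(W,M_i)=\sup_A|\mathbb E[\mathcal A_i g_A(W)]|$. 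The first analytic ingredient is the Stein factor: from the resolvent representation of $g_A$ attached to the Markov dynamics whose stationary law is $M_i$, one proves $\|g_A\|_\infty\le C\big((1-2\theta_1)q\mu\big)^{-1}$ for $i=1$, $\|g_A\|_\infty\le C(1\vee q/p)\big((1-2\theta_2)\mu\big)^{-1}$ for $i=2$, and $\|g_A\|_\infty\le C\big((1-2\theta_3)\mu\big)^{-1}$ for $i=3$. The hypotheses $2\lambda<q\lfloor n+\lambda/q\rfloor$, $2\lambda q<rq+\lambda p$, $2\omega+2\eta<\lambda+\omega+\eta$ are exactly what force $\theta_i<1/2$ and keep these resolvent bounds finite; the sensitive regime is $\theta_i\uparrow 1/2$, where the effective drift degenerates.

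The second, heavier, ingredient is the local expansion of $\mathbb E[\mathcal A_i g_A(W)]$. Its summand $\mathbb E[W g_A(W)]=\sum_{i\in J}\mathbb E[X_i g_A(W)]$ is treated neighbourhood by neighbourhood: for fixed $i\in J$, condition successively on $X_{A_i}$, $X_{A_{ij}}$, $X_{A_{ijk}}$ and Taylor-expand $g_A$, using (LD1)--(LD3) to sever the dependence at each stage. Summed over $i\in J$, the zeroth-, first- and second-order contributions reassemble --- by the very design of \eqref{K1}--\eqref{K3}, which matches the first three factorial cumulants of $M_i$ to those of $W$ --- into the remaining (the ``$g(k+a)$'') part of $\mathcal A_i g_A$, so they cancel up to an $O(1)$ cumulant-rounding error (this is what the additive $C/\mu$ in case (i) absorbs). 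What is left is a higher-order remainder: quadruple sums over the nested neighbourhoods of products of first moments of $X_i,X_j,X_k,X_l$ --- precisely $\gamma=\sum_i\gamma_i$, whose two groups of summands mirror the two ways the innermost index $l$ attaches through $A_{ijk}$ --- each multiplied by a conditional second difference of $g_A$ along $W$. Summation by parts inside $\mathbb E[\,\cdot\mid X_{A_{ijk}}]$ transfers the two differences from $g_A$ onto the conditional law of $W$, bounding each such term by $\|g_A\|_\infty\,\mathcal S_2(W\mid X_{A_{ijk}})\le\|g_A\|_\infty\,S(W)$. Collecting yields $d_{TV}(W,M_i)\le C\,\|g_A\|_\infty\,\gamma\,S(W)$ (plus $C/\mu$ in case (i)), and inserting the three Stein factors gives the displayed bounds.

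The main obstacle is twofold. First, $B(n,p)*\mathcal P(\lambda)$ and $NB(r,p)*\mathcal P(\lambda)$ lie outside the catalogue of laws with an off-the-shelf Stein theory, so one must both set up their Stein equations and --- more delicately --- prove Stein-factor bounds uniform over all admissible parameter tuples and carrying the exact $q$-, $\mu$- and $\theta_i$-dependence claimed; this is where the full strength of the hypotheses is consumed. Second, the three-level Taylor expansion must be organised so that the leftover is genuinely of the order of $\gamma$ (a degree-four interaction functional) against a \emph{second} difference of $g_A$, rather than a coarser estimate; tracking which nested neighbourhood each increment is expanded against, and verifying that the low-order terms telescope exactly into $\mathcal A_i g_A$ for the chosen parameters, is the combinatorially delicate part. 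By contrast, the triangle inequality and the discretized local CLT for the explicit law $M_i$ are comparatively routine.
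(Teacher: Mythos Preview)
Your outline matches the paper's proof closely: explicit Stein operators for the $M_i$ (the paper's Lemma~\ref{r}), a nested Newton expansion through $W_i,W_{ij},W_{ijk}$ exploiting (LD1)--(LD3), cancellation of the low-order terms via the cumulant matching \eqref{K1}--\eqref{K3} (the paper packages this as the identities $G_1=-\Gamma_2$, $G_2=-\Gamma_3/2$ in Lemma~\ref{LEMA4}), and a summation-by-parts step converting the remainder into $\gamma\,S(W)$ times a Stein factor; the final line is the triangle inequality plus the discretized CLT for $M_i$ (Lemma~\ref{L4.1}). You also correctly traced the extra $C/\mu$ in case~(i) to the rounding error $\delta$.

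One bookkeeping slip: having matched \emph{three} cumulants, the uncancelled remainder sits at \emph{third} order in $g$, i.e.\ involves $\Delta^3 g_A$, not $\Delta^2 g_A$. The paper then shifts two of the three differences onto the conditional pmf of $W$ (this is where $\mathcal S_2(W\mid X_{A_{ijk}})$ appears) and keeps one on $g$, so the Stein factor that enters is $\|\Delta g_A\|$, not $\|g_A\|_\infty$; Lemma~\ref{s} bounds $\|\Delta g_A\|$ by precisely the $((1-2\theta_i)\mu)^{-1}$-type quantities you quoted. With $\|g_A\|_\infty$ you would not in general recover the claimed $1/\mu$ dependence. Apart from this off-by-one in the difference order, your sketch is the paper's argument.
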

\begin{remark}\label{NR1}
We adapt different three-parameter random variables to approximate $W$ according to $\mu$ and $\sigma^2$. In principle, we prefer to use $M_1$ (resp. $M_2$) if $\mu$ is significantly greater (resp. less) than $\sigma^2$, while we would rather use $M_3$ if $\mu$ is of roughly the same magnitude as $\sigma^2$; see Section 3 for concrete examples. There is no strict rule regarding variable selection. Similar issues have arisen in the study of two-parameter approximations. For example, Barbour and Xia \cite{barbour1999poisson} used $CP(\lambda, \omega)$ to approximate the number of $2$-runs, while Brown and Xia \cite{brown2001stein} used $NB(r, p)$, both achieving $O(\sigma^{-1})$ error bounds.
\end{remark}

\begin{remark}\label{r1.3}
To apply Theorem \ref{main}, we usually need to efficiently estimate $S(W)$. In many real problems, this is equivalent to estimating $S_{2}(W)$. Here,  we  review the idea introduced in
\cite[Subsection 5.4]{rollin2008symmetric}.  Suppose there exists a random variable $Z$ defined on the same probability space as $W$, such that  $W$ conditioned on $Z=z$ can be represented as a sum of independent summands, i.e., $\mathcal{L}(W| Z=z)=\mathcal{L}(\sum_{i=1}^{n_{z}}X_{i}^{z})$, where $X_{i}^{z}$'s are independent for
each $z$ that $Z$ can attain. In turn, for   $U_z:= \sum_{i=1}^{n_{z}}X_{i}^{z}$, Barbour and Xia~\cite[Proposition 4.6]{barbour1999poisson} showed that
$$S_{2}(U_z)\leq \frac{8}{V_z-2v_z^{\star}},$$
where $V_{z}$ and $v_{z}^{\star}$ are  defined  for each $z$ as follows:
\begin{align}
&\label{2310}
 S_{1}(X_{i}^{z})=\sum_{k\in \mathbb{Z}}|{\rm P}(X_{i}^{z}=k+1)-{\rm P}(X_{i}^{z}=k)|,\\
&\label{2311}
    v_{i} = \min\{0.5, 1-0.5S_{1}(X_{i}^{z})\},\quad V_z =\sum_{i=1}^{n}v_{i}, \quad v_z^{\star}=\max_{i}v_{i}.
\end{align}
 Thus, we obtain
\begin{align}\label{2311}
   S_{2}(W)=  {\rm E}[S_{2}(W | Z)]={\rm E}[S_{2}(U_{ Z})]\leq  {\rm E}\frac{8}{V_{Z}-2v_{Z}^{\star}}.
\end{align}
\end{remark}
In addition to the above integer-valued approximation, it is natural to ask whether there is a good normal approximation for $W$.  Denote by $Y^{d}(\mu, \sigma^{2})$ the discretized normal random variable, i.e.,
$${\rm P} (Y^{d}(\mu, \sigma^{2})=k )=
 \int_{k-0.5}^{k+0.5}\frac{1}{\sqrt{2 \pi \sigma^{2}}} \exp \bigg[-\frac{(x-\mu)^{2}}{2 \sigma^{2}}\bigg]dx, \quad  k\in \mathbb{Z}.$$
 Obviously, we have the triangle inequality
\begin{eqnarray}\label{TE1}
    d_\mathrm{TV}(W,Y^{d}) \leq d_\mathrm{TV}(W,M_{i})+d_\mathrm{TV}(M_i,Y^{d}).
\end{eqnarray}
 Thus,  applying Lemma \ref{L4.1}, we have the following corollary.
\begin{corollary}\label{main2}
    For $i=1,2,3$,
    \begin{eqnarray*}
        d_\mathrm{TV}(W,Y^{d})\le d_\mathrm{TV}(W,M_{i})+\frac{C}{\sigma}.
    \end{eqnarray*}
\end{corollary}

Theorem \ref{main}, together with  \cite[Theorem 2.2]{rollin2015local}, easily provides the upper bound for the local  distance.
\begin{corollary}\label{C21} Use the same setting and notations as in Theorem $\ref{main}$.

 {\rm(i)} Assume $\mu > \sigma^{2}$, and then
   \begin{eqnarray*}
        d_{\rm loc}(W,Y^{d}) \leq  C\bigg[\frac{\gamma S(W)+p^2}{(1-2\theta_{1})q\mu}\bigg]^{1/2} [S_{2}(W)+S_{2}(Y^{d}) ]^{1/2}.
   \end{eqnarray*}

   {\rm(ii)} Assume $\mu < \sigma^{2}$, and then
   \begin{eqnarray*}
       d_{\rm loc}(W,Y^{d}) \leq  C\bigg[  (1\vee \frac{q}{p} )\frac{\gamma S(W)}{(1-2\theta_{2})\mu}\bigg]^{1/2} [S_{2}(W)+S_{2}(Y^{d}) ]^{1/2}.
   \end{eqnarray*}

  {\rm (iii)} Assume $\mu \approx \sigma^{2}$, and then
   \begin{eqnarray*}
       d_{\rm loc}(W,Y^{d}) \leq  C\bigg[\frac{\gamma S(W)}{(1-2\theta_{3})\mu}\bigg]^{1/2} [S_{2}(W)+S_{2}(Y^{d}) ]^{1/2}.
   \end{eqnarray*}
\end{corollary}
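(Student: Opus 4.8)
The plan is to obtain Corollary~\ref{C21} as a direct consequence of Theorem~\ref{main} by feeding its total variation bounds into Theorem~2.2 of \cite{rollin2015local}, which is the only external ingredient. That result is a discrete Landau--Kolmogorov (smoothing) inequality: for integer-valued random variables $U,V$ with summable second differences one has
\[
d_{loc}(U,V)\ \le\ C\,\bigl(d_{TV}(U,V)\bigr)^{1/2}\bigl(\mathcal{S}_{2}(U)+\mathcal{S}_{2}(V)\bigr)^{1/2}.
\]
The mechanism behind it is the elementary interpolation inequality $\|a\|_{\infty}\le C\,\|a\|_{1}^{1/2}\,\|\Delta^{2}a\|_{1}^{1/2}$ valid for $a\in\ell^{1}(\mathbb{Z})$, where $\Delta$ denotes the backward difference: applied to $a(k)=\mathbb{P}(U=k)-\mathbb{P}(V=k)$ it gives $\|a\|_{\infty}=d_{loc}(U,V)$, $\|a\|_{1}=2\,d_{TV}(U,V)$, and $\|\Delta^{2}a\|_{1}\le\mathcal{S}_{2}(U)+\mathcal{S}_{2}(V)$ by the triangle inequality.

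First I would specialize to $U=W$ and $V=Y^{d}(\mu,\sigma^{2})$. Here $\mathcal{S}_{2}(Y^{d})$ is finite, since $Y^{d}$ is the lattice aggregate of a Gaussian density whose second differences are absolutely summable (in fact $\mathcal{S}_{2}(Y^{d})=O(\sigma^{-2})$), while $\mathcal{S}_{2}(W)$ is finite under the standing moment assumptions; both quantities are simply transported into the statement through the factor $[\mathcal{S}_{2}(W)+\mathcal{S}_{2}(Y^{d})]^{1/2}$. Thus the whole task reduces to estimating $d_{TV}(W,Y^{d})$, which is exactly what Theorem~\ref{main} supplies.

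Next I would split into the three regimes. In case (i), $\mu\gg\sigma^{2}$: Theorem~\ref{main}(i) together with the ``Moreover'' line gives $d_{TV}(W,Y^{d})\le d_{TV}(W,M_{1})+C/\sigma\le C\bigl[\tfrac{\gamma S(W)}{(1-2\theta_{1})q\mu}+\tfrac1\mu\bigr]$, and substituting this bound for $d_{TV}(W,Y^{d})$ in the smoothing inequality yields the stated estimate for $d_{loc}(W,Y^{d})$. Cases (ii) and (iii) are treated verbatim, using parts (ii), (iii) of Theorem~\ref{main}, the variables $M_{2},M_{3}$, and the thresholds $\theta_{2},\theta_{3}$.

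The only delicate point --- the ``main obstacle'', such as it is --- is the bookkeeping around the additive $C/\sigma$ produced by $d_{TV}(M_{i},Y^{d})$: one must check that it does not spoil the displayed form in each regime. Using $\sqrt{x+y}\le\sqrt{x}+\sqrt{y}$, its contribution to $d_{loc}$ is at most $C\sigma^{-1/2}[\mathcal{S}_{2}(W)+\mathcal{S}_{2}(Y^{d})]^{1/2}$, which is of lower order than the leading term in all situations of interest (in the four worked examples $\mathcal{S}_{2}(W)\asymp\mathcal{S}_{2}(Y^{d})\asymp\sigma^{-2}$, so this piece is $O(\sigma^{-3/2})$ against a leading term of larger order); alternatively it is absorbed into the $O(\gamma S(W)/\mu)+O(1/\mu)$ already present in the $M_{i}$ bound. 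One should also confirm that the mild regularity hypotheses under which Theorem~2.2 of \cite{rollin2015local} is stated are satisfied by $W$ and $Y^{d}$. Beyond this, the corollary is essentially a one-line deduction from Theorem~\ref{main} and the smoothing inequality, so no substantially new difficulty arises.
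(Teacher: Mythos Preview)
Your proposal is correct and follows essentially the same route as the paper, which simply states that the corollary follows from Theorem~\ref{main} together with Theorem~2.2 of \cite{rollin2015local}. Your identification of the smoothing inequality $d_{loc}(U,V)\le C\,d_{TV}(U,V)^{1/2}\bigl(\mathcal{S}_{2}(U)+\mathcal{S}_{2}(V)\bigr)^{1/2}$ as the key external input, and your observation about the residual $C/\sigma$ term (which the paper silently absorbs), are exactly right.
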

To conclude Section 1, we state a corollary for the classical $m$-dependent random variable sequence. Recall that $X_{1},\ldots, X_{n}$ is said to be
 $m$-dependent if $\{X_{i}, i \leq j \}$ is independent of $\{X_{i},i \geq j+m+1\}$ for any $j = 1,\dots,n-m-1$. Let $W=\sum_{i=1}^{n}X_{i}$, and assume that ${\rm E}X_{i} > 0$ and ${\rm E}X_{i}^{3}<\infty$. We have the following corollary.

\begin{corollary}\label{COO4} Use the same setting and notations as in Theorem $\ref{main}$.

  {\rm(i)} Assume $\mu > \sigma^{2}$, and then
$$d_\mathrm{TV}(W,M_{1})\leq C\bigg(\frac{m^{3}S(W)\sum_{i\in J}{\rm E}X_{i}^{4}+p^2}{(1-2\theta_{1})npq}\bigg).$$

{\rm(ii)} Assume $\mu < \sigma^{2}$, and then
$$d_\mathrm{TV}(W,M_{2})\leq C\bigg((p \vee q)\frac{m^{3}S(W)}{(1-2\theta_{2})rq}\sum_{i\in J}{\rm E}X_{i}^{4}\bigg).$$

{\rm(iii)} Assume $\mu \approx \sigma^{2}$, and then
$$d_\mathrm{TV}(W,M_{3})\leq C\bigg( \frac{m^{3}S(W) }{(1-2\theta_{3})(\lambda+\omega+\eta)}\sum_{i\in J}{\rm E}X_{i}^{4}\bigg).$$
\end{corollary}
\begin{remark}\label{BL1}
For most  $m$-dependent models, we usually have $S(W)/S_{2}(W) \to 1$ as $n\rightarrow \infty$.  Thus,  it is  sufficient to estimate  $S_2(W)$. Without loss of generality,  assume  $n=(3m+1)r$, where $r\in \mathbb{Z}^{+}$. Rewrite $W$ as a sum of blocks:  $$W=\sum_{i=1}^{r}Y_{i},\quad\text{where}\ Y_{i}=\sum_{j=(3m+1)(i-1)+1}^{(3m+1)i}X_{j}.$$
   Define $$\partial Y_{i}=(X_{(3m+1)(i-1)+1},X_{(3m+1)(i-1)+2},\dots,X_{(3m+1)(i-1)+m})$$  and $$Z=\{\partial Y_{1},\partial Y_{2},\ldots,\partial Y_{r}\}.$$ It is immediate that $$\mathcal{L}(W| Z=z)= \mathcal{L}(Y_{1}^{z})*\mathcal{L}(Y_{2}^{z})*\cdots*\mathcal{L}(Y_{r}^{z}),$$
   where $\mathcal{L}(Y_{i}^{z})=\mathcal{L}(Y_{i}| Z=z).$ To apply \eqref{2311} in Remark \ref{r1.3}, we need the following two additional assumptions:

  ($H_{1}$)  There exists a constant $c<2$ such that $\max_{1\leq i\leq r}S_{1}(Y_{i}^{z})\leq c$.

  ($H_{2}$)  For every $z$, $v^{\star}_{z}/V_{z}\rightarrow 0$  as $r\rightarrow \infty$.

\noindent Roughly speaking, $(H_{1})$ and $(H_{2})$ essentially require that $Y_{i}^{z}$ is not degenerate, i.e., $Y^{z}_{i}$ is not entirely determined by $\partial{Y_{i-1}}$ and $\partial{Y_{i}}$, and that all $S_{1}(Y_{i}^{z})$'s are of roughly the same order. Under the assumptions $(H_{1})$ and $(H_{2})$, both $S(W)$ and $S_{2}(W)$ are $O(N^{-1})$ by \eqref{2311}. This idea works particularly well for sums of functionals with a finite range of independent and identically distributed (i.i.d.) random variables since these sums easily satisfy  $(H_{1})$ and $(H_{2})$.
\end{remark}

To illustrate the advantages of the three-parameter approximation and explain how to estimate $S(W)$, we shall present an example of $2$-runs in detail, which is a $1$-dependent model.

\begin{example}
    Let $J =\{1,2,\ldots,N\}$, and assume that $\{I_{i}, i \in J\}$ are independent Bernoulli random variables with $I_{i} \sim B(1, \bar{p})$. To avoid edge effects, we set $I_{0}=I_{N}$. Define $$X_{i}=I_{i-1}I_{i}, \quad   W_{N}=\sum_{i=1}^{N} X_{i}.$$
 $\{X_{i},\;i\in J\}$ satisfies the $1$-dependent structure.
 Let $$A_{i}=\{j:|j-i| \leq 1\} \cap J, \quad A_{ij}=\{k:\min\{|k-i|,|k-j|\} \leq 1\} \cap J,$$  and  $$A_{ijk}=\{l:\min\{|l-i|,|l-j| ,|l-k| \leq 1\} \cap J.$$
  It is  easy to verify that the conditions (LD1)--(LD3) are satisfied for $\{X_{i}, i\in J\}$ with $A_i$, $A_{ij}$, and $A_{ijk}$. After straightforward calculations, we have $\mu=N\bar{p}^{2}$ and $\sigma^{2}=N\bar{p}^{2}+2N\bar{p}^{3}-3N\bar{p}^{4}$. Assume $\bar{p}<2/3$. Then,  we choose $M_{2}$ to approximate $W$. Further computations yield $\Gamma_{3}=6N\bar{p}^{4}-24N\bar{p}^{5}+20N\bar{p}^{6}$. Matching the first three factorial cumulants yields the parameters of $M_{2}$:
\begin{align*}
    r=\frac{(2N\bar{p}^{3}-3N\bar{p}^{4})^{3}}{(3N\bar{p}^{4}-12N\bar{p}^{5}
    +10N\bar{p}^{6})^{2}}, \quad p=\frac{2N\bar{p}^{3}-3N\bar{p}^{4}}{2N\bar{p}^{3}-12N\bar{p}^{5}
    +10N\bar{p}^{6}},\quad\lambda=N\bar{p}^{2}- \frac{(2N\bar{p}^{3}-3N\bar{p}^{4})^{2}}{3N\bar{p}^{4}-12N\bar{p}^{5}
    +10N\bar{p}^{6}},
\end{align*}
and
\begin{align}\label{TP}
   \theta_{2}=\frac{-\bar{p}+\bar{p}^{3}}{2-3\bar{p}}.
\end{align}
By \eqref{TP}, the assumption of  Corollary \ref{COO4} holds   whenever  $\bar{p}<1/2$.  Clearly,
$$\sum_{i\in J}{\rm E}X_{i}^{4}=N\bar{p}^{2}.$$

We are left to estimate $S(W)$. Without proof, we claim that $\lim_{N\rightarrow \infty}S_2(W)/S(W)=1$. As  in Remark~\ref{BL1}, we assume $n=4r$ and set $$Y_{i}=\sum_{j=4i-3}^{4i}X_{j},\quad \partial Y_{i}=X_{4i-3},\quad Z=\{X_{1},X_{5},\ldots, X_{4r-3}\}.$$
Note that each $Y_{i}$ is influenced only by $\partial Y_{i}$ and $\partial Y_{i+1}$. For each $z=\{y_{1},\dots,y_{r}\}$, through straightforward calculations, we obtain $$2(1-\bar{p})^{2}\leq S_{1}(Y_{i}^{Z=z})\leq 2-2\bar{p}^{2}.$$   It is immediate that all $v_{i}$'s  belong to $[\bar{p}^{2}, 2\bar{p}-\bar{p}^{2}]$, and by definition, $V_{z}=O(N)$ and  $v_z^{\star}= O(1)$, which give  $S_{2}(W)=O(N^{-1})$.
Recalling that $\theta_{2}<0.5$, $m=2$, $(p\vee q)\leq 1$, and $rq=O(N)$, we finally obtain, by  Corollary \ref{COO4},
that $d_\mathrm{TV}(W,M_{2})=O(N^{-1}).$
For comparison, note that Barbour and Xia \cite{barbour1999poisson}  used a two-parameter compound Poisson distribution to approximate $W$, and Brown and Xia \cite{brown2001stein} used a negative binomial distribution to approximate $W$, but their error bounds in total variation distance are only $O(N^{-0.5})$.
\end{example}

The rest of this paper is organized as follows. In Section \ref{S3}, we first briefly review the basic steps of Stein's method and provide the Stein operator associated with each $M_i$. Then, we complete the proof of Theorem 1.1 by repeatedly using Newton's expansion and controlling the error terms. In Section \ref{S22}, we consider four interesting examples, three of which are related to random graphs. Through delicate analysis, we verify the local dependence structures and determine the corresponding parameters for each example. Two claims involve elementary but lengthy computations, which are postponed to Appendices A and B. Compared with relevant results in the literature, our approximation upper bounds appear sharper.

\section{Proofs of main results}\label{S3}
\vspace{-2mm}
\subsection{Preliminaries}\label{PS1}
Suppose we are given two integer-valued random variables $U$ and $V$, where $U$ is the object of study and $V$ is the target variable.
Let $\mathcal{H}$ be a family of functions, and the distance in the sense of $\mathcal{H}$ is defined as
\begin{eqnarray}
    d_{\mathcal{H}}(U, V)=\sup_{f\in \mathcal{H}}|{\rm E}f(U)-{\rm E}f(V)|. \label{H}
\end{eqnarray}
In particular, let $\mathcal{H}=\{\mathbf{1}_{A}, A \subset \mathbb{Z}\}$ (resp. $\mathcal{H}=\{\mathbf{1}_{a}, a \in \mathbb{Z}\}$). Then,  (\ref{H}) reduces to $d_\mathrm{TV}(U, V)$ (resp. $d_{\rm loc}(U, V)$).

 Stein's method turns out to be a powerful tool in the study of $d_{\mathcal{H}}$. It usually consists of the following three steps.  The first step is to find an appropriate operator $\mathcal{A}_{V}$ that satisfies ${\rm E} \mathcal{A}_{V}g(V)= 0$ for any bounded function with $g(0)=0$ and $g(m)=0$ for $m\notin \operatorname{supp}(V)$. The second step is to find, for each $f\in \mathcal{H}$, the solution $g_f$ to the  equation
\begin{equation}\label{2.0}
    \mathcal{A}_{V} g(m)=f(m)-{\rm E} f(V), \quad m \in \mathbb{Z}
    \end{equation}
and to characterize the properties of $g_f$. The third step is to re-express the distance $d_{\mathcal{H}}(U, V)$ as
\begin{equation}\label{St}
    d_{\mathcal{H}}(U, V)=\sup _{f \in \mathcal{H}} |{\rm E} \mathcal{A}_{V} g_{f}(U) |.
\end{equation}

The following lemma provides the  Stein operator associated with $M_i$, $i=1, 2, 3$. Proofs can be found
in \cite{su2022approximation,upadhye2017Stein}. For a given discrete function, define $\Delta g(k) = g(k+1)-g(k)$.
\begin{lemma}\label{r} We have

$(1)$  for $M_{1}\sim B(n,p)*\mathcal{P}(\lambda)$,
    \begin{eqnarray}\label{L211}
        \mathcal{A}_{M_{1}}g(k)=\bigg(n\frac{p}{q}
        +\frac{\lambda}{q}-\frac{p}{q}k\bigg)g(k+1) - kg(k) +\lambda \frac{p}{q}\Delta g(k+1);
      \end{eqnarray}

$(2)$  for $M_{2}\sim NB(r,p)*\mathcal{P}(\lambda)$,
 \begin{eqnarray}\label{L212}
\mathcal{A}_{M_{2}}g(k)=q \bigg(r+\frac{\lambda p}{q}+k \bigg) g(k+1)-k g(k)-\lambda q \Delta g(k+1);
  \end{eqnarray}

$(3)$  for $M_{3}\sim\mathcal{P}(\lambda)*2\mathcal{P}(\omega /2)*3\mathcal{P}(\eta/ 3)$,
\begin{eqnarray}\label{L213}
\mathcal{A}_{M_{3}}g(k)=(\lambda+\omega+\eta) g(k+1)-k g(k)+\omega\Delta g(k+1)+\eta[\Delta g(k+1)+\Delta g(k+2)].
    \end{eqnarray}
\end{lemma}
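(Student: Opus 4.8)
The plan is to prove all three identities by the probability generating function (PGF) method, exploiting that $M_1,M_2,M_3$ are each a \emph{finite convolution} of standard lattice laws whose PGFs have rational logarithmic derivatives. By independence of the convolution factors, $\phi_{M_1}(z)=(q+pz)^n e^{\lambda(z-1)}$, $\phi_{M_2}(z)=\bigl(\tfrac{p}{1-qz}\bigr)^{r}e^{\lambda(z-1)}$, and $\phi_{M_3}(z)=e^{\lambda(z-1)}e^{(\omega/2)(z^2-1)}e^{(\eta/3)(z^3-1)}$, where one uses that $a\,\mathcal P(\mu)$ has PGF $e^{\mu(z^a-1)}$. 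Differentiating $\log\phi_{M_i}$ gives $\phi_{M_1}'/\phi_{M_1}=\tfrac{np}{q+pz}+\lambda$, $\phi_{M_2}'/\phi_{M_2}=\tfrac{rq}{1-qz}+\lambda$, and $\phi_{M_3}'/\phi_{M_3}=\lambda+\omega z+\eta z^2$; clearing denominators turns each into a first-order linear ODE $Q_i(z)\,\phi_{M_i}'(z)=P_i(z)\,\phi_{M_i}(z)$ with polynomial coefficients of degree at most $2$.

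Next I would expand both sides of each ODE as a power series in $z$, using $\phi_{M_i}'(z)=\sum_{k\ge 0}(k+1)p_{k+1}z^k$ with $p_k=\mathbb P(M_i=k)$, and equate the coefficient of $z^k$. This produces a short linear recursion for $(p_k)$; for instance, for $M_1$ one gets $q(k+1)p_{k+1}=(np+\lambda q-pk)p_k+\lambda p\,p_{k-1}$, and analogous three-term recursions for $M_2$ and $M_3$. Multiplying the recursion through by $g(k+1)$, summing over $k\ge 0$ and re-indexing — the $k=0$ boundary term vanishes because of the factor $k$, and all sums converge for bounded $g$ — converts it into a Stein identity $\mathbb E[\widetilde{\mathcal A}_{M_i}g(M_i)]=0$ with $\widetilde{\mathcal A}_{M_i}$ written in terms of $g(k),g(k+1),g(k+2)$ (and $g(k+3)$ for $M_3$). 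A final one-line algebraic rearrangement, replacing $g(k+2)$ by $g(k+1)+\Delta g(k+1)$ (and $g(k+3)$ by $g(k+1)+\Delta g(k+1)+\Delta g(k+2)$ for $M_3$) and simplifying with $q=1-p$ (e.g.\ $\tfrac{\lambda(1-p)}{q}=\lambda$), puts $\widetilde{\mathcal A}_{M_i}$ into exactly the stated forms \eqref{L211}, \eqref{L212}, \eqref{L213}.

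It remains to note that each $\mathcal A_{M_i}$ genuinely \emph{characterizes} the law of $M_i$ — i.e.\ $\mathbb E[\mathcal A_{M_i}g(X)]=0$ for every bounded $g$ vanishing off $\operatorname{supp}(M_i)$ forces $X\stackrel{d}{=}M_i$ — which follows by running the computation backwards: testing against indicator-type functions $g=\mathbf 1_{\{k+1\}}$ recovers the recursion for $\mathbb P(X=k)$, and this together with $\sum_k\mathbb P(X=k)=1$ pins down the distribution uniquely. The only delicate points are bookkeeping: the $NB(r,p)$ factor makes $\phi_{M_2}$ analytic only for $|z|<1/q$, so the coefficient comparison should be performed inside that disk (or formally at the level of power series), and one must track carefully the index shifts and boundary terms in the summation step. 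None of this is substantial; indeed these operators are already recorded in \cite{su2022approximation} and \cite{upadhye2017Stein}, so one may alternatively simply invoke those references.
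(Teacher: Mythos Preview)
Your derivation is correct: the PGF identities, the resulting three-term recursions for $p_k=\mathbb P(M_i=k)$, and the final rewriting in terms of $\Delta g$ all check out line by line (in particular the $M_3$ case yields $(\omega+\eta)\Delta g(k+1)+\eta\,\Delta g(k+2)$, which is exactly the stated $\omega\Delta g(k+1)+\eta[\Delta g(k+1)+\Delta g(k+2)]$). Note that the paper does not actually give its own proof of this lemma --- it simply cites \cite{su2022approximation} and \cite{upadhye2017Stein} --- and the PGF/ODE route you outline is precisely the method those references use to manufacture Stein operators for convolutions of standard lattice laws, so your approach coincides with the intended one.
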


Let $g_{A}$ (resp. $g_{a}$)  denote  the solution to (\ref{2.0}) corresponding to $f=\mathbf{1}_{A}$  (resp. $\mathbf{1}_{a}$), where $A\subset \mathbb{Z}$ (resp. $a \in \mathbb{Z})$. Let $\|\cdot\|$ represent the maximum parametric value. The following lemma provides consistent upper bounds for $\|\Delta g_{A}\|$ and $\|g_{a}\|$, which can be found in \cite[Lemma 2.3]{su2022approximation} and \cite[Lemma 3.1]{upadhye2017Stein}, respectively.

\begin{lemma}\label{s} We have

{\rm(i)} for $M_{1}\sim B(n,p)*\mathcal{P}(\lambda)$, if $2 \lambda  < q\lfloor n+ \lambda/p\rfloor$,
\begin{eqnarray*}
   \|\Delta g_{A}\|\leq \frac{q}{\lfloor n+\lambda/p\rfloor pq-2 \lambda p},\quad \| g_{a}\|\leq  \frac{q}{\lfloor n+\lambda/p\rfloor pq-2 \lambda p};
\end{eqnarray*}

{\rm(ii)} for $M_{2}\sim NB(r,p)*\mathcal{P}(\lambda)$, if $2 \lambda  q <(rq+\lambda p)$,
\begin{eqnarray*}
    \|\Delta g_{A}\|\leq \frac{1}{(rq+\lambda p)-2 \lambda q},\quad \| g_{a}\|\leq  \frac{1}{(rq+\lambda p)-2 \lambda q};
\end{eqnarray*}

{\rm(iii)} for $M_{3}\sim\mathcal{P}(\lambda)*2\mathcal{P}(\omega /2)*3\mathcal{P}(\eta /3)$, if $2(\omega+2\eta)< \lambda+\omega+\eta$,
\begin{eqnarray*}
    \|\Delta g_{A}\|\leq \frac{1}{\lambda+\omega+\eta-2 (\omega+2\eta) },\quad \| g_{a}\|\leq \frac{1}{\lambda+\omega+\eta-2(\omega+2\eta)}.
\end{eqnarray*}
\end{lemma}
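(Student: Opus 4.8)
The plan is to treat each Stein operator in \eqref{L211}--\eqref{L213} as an exactly solvable ``base'' operator perturbed by a bounded combination of forward differences, and to close the estimate with a self-bounding (fixed-point) inequality. I write the Stein equation $\mathcal A_{M_i}g = h$, where $h = \mathbf 1_A - \mathbb E\mathbf 1_A(M_i)$ for the total-variation factor $\|\Delta g_A\|$ and $h = \mathbf 1_a - \mathbb P(M_i = a)$ for the local factor $\|g_a\|$; in both cases $\mathrm{osc}(h)\le 1$ and $\mathbb E h(M_i) = 0$. In each operator I isolate the first-order part $\mathcal A_i^{(0)}g(k) = \beta_k g(k+1) - k g(k)$ and move the difference terms to the right-hand side. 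For $M_3$ the base part is $(\lambda+\omega+\eta)g(k+1)-kg(k)$, which is exactly the Poisson$(\lambda+\omega+\eta)$ Stein operator, with perturbation $\omega\Delta g(k+1)+\eta[\Delta g(k+1)+\Delta g(k+2)]$; for $M_1$ the base coefficient $\frac pq(n+\lambda/p - k)$ is the normalized Stein operator of $B(\lfloor n+\lambda/p\rfloor,p)$, with perturbation $\lambda\frac pq\Delta g(k+1)$; and for $M_2$ the base coefficient $q(r+\lambda p/q + k)$ is the Stein operator of a negative binomial $NB(r+\lambda p/q,p)$, with perturbation $-\lambda q\Delta g(k+1)$. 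Rearranging converts the equation into a base equation $\mathcal A_i^{(0)}g = \tilde h$ with modified inhomogeneity $\tilde h = h - (\text{perturbation})$.

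Second, I invoke the classical Stein-factor estimate for the base operator. For a first-order operator $\beta_k g(k+1) - kg(k) = \tilde h(k)$ whose invariant law is one of these unimodal distributions, solving the recursion expresses $g$ as a ratio of partial sums of the invariant pmf against $\tilde h$; using $\mathbb E\tilde h = 0$ together with unimodality yields $\|\Delta g\|\le C_i^{(0)}\,\mathrm{osc}(\tilde h)$ and $\|g\|\le C_i^{(0)}\|\tilde h\|$, where $C_i^{(0)}$ is the reciprocal of the leading term in the stated denominators: $C_3^{(0)} = (\lambda+\omega+\eta)^{-1}$, $C_1^{(0)} = (\lfloor n+\lambda/p\rfloor p)^{-1} = q(\lfloor n+\lambda/p\rfloor pq)^{-1}$, and $C_2^{(0)} = (rq+\lambda p)^{-1}$. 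These are precisely the Poisson, binomial, and negative-binomial Stein factors recorded in \cite{su2022approximation} and \cite{upadhye2017Stein}, which I cite for the detailed recursion and monotonicity computations.

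Third, I close the loop. Each forward difference satisfies $\mathrm{osc}(\Delta g(\cdot+j))\le 2\|\Delta g\|$, so the perturbation contributes at most twice the sum of the absolute values of its coefficients, namely $\kappa_1=\lambda p/q$ for $M_1$, $\kappa_2=\lambda q$ for $M_2$, and $\kappa_3=\omega+2\eta$ for $M_3$. Writing $K=\|\Delta g\|$, the base estimate becomes the self-referential inequality $K\le C_i^{(0)}\big(1 + 2\,\kappa_i\,K\big)$, whence $K\le C_i^{(0)}/(1-2\kappa_i C_i^{(0)})$. Substituting the three pairs $(C_i^{(0)},\kappa_i)$ produces exactly the denominators $\lfloor n+\lambda/p\rfloor pq - 2\lambda p$, $rq+\lambda p-2\lambda q$, and $\lambda+\omega+\eta-2(\omega+2\eta)$; the hypotheses $2\lambda<q\lfloor n+p/q\rfloor$, $2\lambda q<rq+\lambda p$, and $2(\omega+2\eta)<\lambda+\omega+\eta$ are precisely the conditions $1-2\kappa_i C_i^{(0)}>0$ that keep the fixed-point bound positive and finite. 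The local factor $\|g_a\|$ follows from the same scheme applied to the base bound $\|g\|\le C_i^{(0)}\|\tilde h\|$.

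The main obstacle I expect is making the self-referential step legitimate and sharp. One must first know a priori that $K=\|\Delta g\|$ (resp. $\|g\|$) is finite before feeding it back into the right-hand side; this requires that the recursion defining the base solution converge and have uniformly bounded differences, which rests on the light tails and unimodality of the base laws. The second delicate point is obtaining the base factor $C_i^{(0)}$ with the exact leading constant rather than merely up to a multiplicative factor, since any slack there would spoil the precise denominators; this is the heart of the cited lemmas and is where the careful binomial and negative-binomial recursion estimates must be reproduced.
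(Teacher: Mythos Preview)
The paper does not give its own proof of this lemma; it simply quotes the bounds from Lemma~2.3 of \cite{su2022approximation} and Lemma~3.1 of \cite{upadhye2017Stein}. Your perturbation/fixed-point scheme---rewriting $\mathcal A_{M_i}g=h$ as a base birth--death Stein equation $\mathcal A_i^{(0)}g=\tilde h$ with $\tilde h=h-(\text{difference terms})$, applying the sharp base Stein factor $C_i^{(0)}$, and closing via $K\le C_i^{(0)}(1+2\kappa_iK)$---is exactly the method those references employ, so your approach is correct and coincides with the intended argument. One small caution: in the self-bounding step you should phrase the base estimate as $\|\Delta g\|\le C_i^{(0)}\|\tilde h\|_\infty$ (or the oscillation bound that holds for arbitrary bounded right-hand sides), rather than invoking $\mathbb E\tilde h=0$, since $\tilde h$ depends on $g$ and need not be centered under the base law; the cited lemmas handle this by working directly with the recursion formula for $g$, which is what you allude to in your final paragraph.
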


We finally need the following lemma concerning the normal approximation of $M_{i}$, which is a direct consequence of \cite[Theorem 7.3]{chen2011normal} together with the unimodality of $M_{i}$.
\begin{lemma}\label{L4.1} For $M_{i}$, $i=1,2,3$, we have
        $$d_\mathrm{TV} (M_{i}, Y^{d} )\leq \frac{C}{\sqrt{\operatorname{Var}W}}.$$
\end{lemma}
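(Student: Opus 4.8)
The plan is to treat each $M_i$ as a sum of \emph{independent} integer-valued summands and to compare it directly, in total variation, with the matched discretized normal. The parameter choices in \eqref{K1}--\eqref{K3} were made so that $\Gamma_1(M_i)=\Gamma_1(W)$ and $\Gamma_2(M_i)=\Gamma_2(W)$; since $\Gamma_1(U)=\mathbb{E}U$ and $\Gamma_2(U)=\operatorname{Var}(U)-\mathbb{E}U$, this forces $\mathbb{E}M_i=\mu$ and $\operatorname{Var}(M_i)=\sigma^2=\operatorname{Var}(W)$ (up to the integer-part rounding in \eqref{K1}, which only perturbs the bound at lower order). First I would record that each $M_i$ is a convolution of standard lattice laws: $M_1$ is a sum of $n$ Bernoulli$(p)$ variables and an independent $\mathcal{P}(\lambda)$; $M_2$ a negative binomial plus an independent $\mathcal{P}(\lambda)$; and $M_3$ a sum of a $\mathcal{P}(\lambda)$, a $2\mathcal{P}(\omega/2)$ and a $3\mathcal{P}(\eta/3)$. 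In every case the $\mathcal{P}(\lambda)$ factor with $\lambda>0$ guarantees that $M_i$ is supported on a lattice of span $1$, which is precisely what is needed for a comparison with the discretized normal of the same span.

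Next I would invoke Theorem 7.3 of \cite{chen2011normal}, a Berry--Esseen-type normal approximation for sums of independent random variables, to bound the Kolmogorov distance by a quantity of the expected shape
$$ d_K(M_i, Y^d) = \sup_{m\in\mathbb{Z}} \big|\mathbb{P}(M_i\le m) - \mathbb{P}(Y^d\le m)\big| \le \frac{C\,\beta_i}{\sigma^3}, $$
where $\beta_i=\sum_j \mathbb{E}|\xi_j-\mathbb{E}\xi_j|^3$ is the total third central moment of the independent summands $\xi_j$ of $M_i$. I would then carry out the routine Lyapunov bookkeeping: for Bernoulli, Poisson, negative-binomial and scaled-Poisson summands the third central moments are of the same order as their variances, so $\beta_i=O(\sigma^2)$ and hence $d_K(M_i,Y^d)=O(1/\sigma)$.

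The genuinely delicate step is to upgrade this Kolmogorov bound to a total variation bound, and this is where unimodality enters. Writing $G(m)=\mathbb{P}(M_i\le m)-\mathbb{P}(Y^d\le m)$, one has
$$ 2\,d_{TV}(M_i,Y^d)=\sum_k\big|\mathbb{P}(M_i=k)-\mathbb{P}(Y^d=k)\big|=\sum_m|G(m)-G(m-1)|, $$
the total variation of the sequence $G$. If the point-mass difference $\mathbb{P}(M_i=k)-\mathbb{P}(Y^d=k)$ changes sign only a bounded number of times, then $G$ has boundedly many monotone runs and its total variation is at most a fixed multiple of $\sup_m|G(m)|=d_K(M_i,Y^d)$, giving $d_{TV}(M_i,Y^d)\le C\,d_K(M_i,Y^d)\le C/\sigma$. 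The bounded sign-change count is supplied by unimodality: the discretized normal has a log-concave mass function (the Gaussian density is log-concave and binning over unit intervals preserves this on the integer lattice), and each $M_i$ is unimodal, so the two mass functions cross only boundedly often. For $M_1$ and $M_2$ unimodality is immediate, as they are convolutions of log-concave laws (Bernoulli/binomial, Poisson and negative binomial are log-concave, and log-concavity is stable under convolution).

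The main obstacle is therefore not the normal-approximation step, which is a mechanical application of \cite{chen2011normal} once the moment bookkeeping is done, but the unimodality input for $M_3$ and its conversion into a uniform control on sign changes. The difficulty is that $2\mathcal{P}(\omega/2)$ and $3\mathcal{P}(\eta/3)$ are supported on proper sublattices and are not individually unimodal on $\mathbb{Z}$; here I would exploit that in the regime $\Gamma_2(W)\approx0$ one has $\lambda=\Gamma_1(W)-\Gamma_2(W)+\tfrac12\Gamma_3(W)\approx\mu$ dominating $\omega$ and $\eta$, so convolution with the large $\mathcal{P}(\lambda)$ factor smooths the law into a unimodal one, the unimodality being established in \cite{su2022approximation}. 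Making this quantitative enough to bound the number of crossings of $\mathbb{P}(M_3=k)$ against the discretized Gaussian is where the real work lies.
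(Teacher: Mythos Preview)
Your proposal lines up with the paper's own one-line justification: the paper says only that the lemma ``is a direct consequence of Theorem~7.3 in Chen et al.~\cite{chen2011normal} together with the unimodality of~$M_i$,'' and you invoke precisely those two ingredients.

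One point in your elaboration does not go through as stated. You upgrade $d_K$ to $d_{TV}$ by arguing that the mass functions of a log-concave law and a unimodal law ``cross only boundedly often.'' This is false in general: the difference of two concave functions is arbitrary, so even for two log-concave sequences $p,q$ the sign pattern of $p_k-q_k$ may oscillate on the order of the standard deviation (take $\log q_k=-k^2/(2\sigma^2)+\varepsilon\sin k$ with small $\varepsilon$, which is concave and crosses the discretized Gaussian at $\asymp\sigma$ points). The bound you would then obtain is $d_{TV}\le C\sigma\cdot d_K$, which is useless. A safer reading is that Theorem~7.3 already controls the total-variation (or local) distance for sums of independent integer-valued variables on a span-one lattice, with unimodality used to verify its smoothness hypothesis rather than as a post-hoc sign-counting device; alternatively one needs an argument specific to these particular $M_i$ and $Y^d$, not a generic ``unimodality $\Rightarrow$ few crossings'' principle. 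Your identification of the $M_3$ case as delicate is apt, but the gap sits earlier than you locate it.
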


\subsection{Proof of Theorem \ref{main}} \label{SS32}
Let
$$W_{i}=W-\sum_{l\in A_{i}}X_{l},\quad W_{ij}=W-\sum_{l\in A_{ij}}X_{l},\quad W_{ijk}=W-\sum_{l\in A_{ijk}}X_{l},$$
where $A_{i}, A_{ij}$, and $A_{ijk}$ are defined as in (LD1)--(LD3).

Let us begin with the proof of (iii). By Lemma \ref{r} and the independence of $X_{i}$ and $W_{i}$, we have
\begin{align}\label{1123}
   {\rm E}\mathcal{A}_{M_{3}}g(W)&=(\lambda+\omega+\eta){\rm E} g(W+1)-{\rm E}W g(W)+\omega {\rm E}\Delta g(W+1)+\eta{\rm E}[\Delta g(W+1)+\Delta g(W+2)]\notag\\
    &=\sum_{i\in J} {\rm E}X_{i} {\rm E}g(W+1)-\sum_{i\in J}{\rm E}X_{i}g(W)+\omega{\rm E}\Delta g(W+1)+\eta{\rm E}[\Delta g(W+1)+\Delta g(W+2)]\notag\\
    &=\sum_{i\in J} {\rm E}X_{i} {\rm E}[g(W+1)-g(W_{i}+1)]-\sum_{i\in J}{\rm E}X_{i} [g(W)-g(W_{i}+1)]\notag\\
    &\quad\,+ \{\omega{\rm E}\Delta g(W+1)+\eta{\rm E}[\Delta g(W+1)+\Delta g(W+2)] \}.
\end{align}

Now, we analyze each term on the right-hand side of (\ref{1123}) separately. Start with $\sum_{i\in J} {\rm E}X_{i} {\rm E}[g(W+1)-g(W_{i}+1)]$. Using Newton's expansion
 \cite[p.\,518]{barbour2002total}, we have
 $$ g(w+s)=g(w+1)+(s-1) \Delta g(w+1)+\frac{(s-1)(s-2)}{2}\Delta^{2} g(w+1)+\sum_{u=1}^{s-1}\sum_{v=1}^{k-2}(u-v-1) \Delta^{3} g(w+v),$$
and thus
\begin{align}\label{t1t1t1}
    &{\rm E}[g(W+1)-g(W_{i}+1)]\notag\\
    &\quad={\rm E}\sum_{j\in A_{i}}X_{j}\Delta g(W_{i}+1)+\frac{1}{2}{\rm E}\sum_{j\in A_{i}}X_{j}\bigg(\sum_{k\in A_{i}}X_{k}-1\bigg)\Delta^{2} g(W_{i}+1)+E_{1},
\end{align}
where the error term $E_1$ is defined as
\begin{align}\label{E1}
    E_{1}= {\rm E}\sum_{u=1}^{\Sigma_{j \in A_{i}}X_{j}}\bigg[\sum_{v=1}^{u-2}(u-v) \Delta^{3} g(W_{i}+v)\bigg].
\end{align}
Let $p^{*}_{l}={\rm P}(W_{ijk}=l| X_{A_{ijk}})$. Then,  for every fixed $v$,
    \begin{align*}
        &{\rm E} (\Delta^{3} g(W_{i}+v)| X_{A_{ijk}} )\\
        &\quad=\sum_{l=0}^{\infty}p^{*}_{l} [\Delta g(X_{A_{ijk}}^{*}+l+v+2)-2\Delta g(X_{A_{ijk}}^{*}+l+v+1)+\Delta g(X_{A_{ijk}}^{*}+l+v) ]\\
        &\quad= \sum_{l=0}^{\infty} \Delta g(X_{A_{ijk}}^{*}+l+v)(p^{*}_{l}-2p^{*}_{l-1}+p^{*}_{l-2}).
    \end{align*}
  Hence, using the fact that   $S_{2}(W| X_{A_{ijk}})=\sum_{l=0}^{\infty} |p^{*}_{l}-2p^{*}_{l-1}+p^{*}_{l-2}|$, we get
    \begin{align*}
        |  {\rm E} (\Delta^{3} g(W_{i}+v)| X_{A_{ijk}} ) | &\leq \|\Delta g\|  \sum_{l=0}^{\infty} |p^{*}_{l}-2p^{*}_{l-1}+p^{*}_{l-2}|\\
     &=   \|\Delta g\| S_{2}(W| X_{A_{ijk}})\\
     &\leq   \|\Delta g\| S(W),
    \end{align*}
    where the last inequality follows from the fact that $S_{2}(W| X_{A_{ijk}})\le S(W)$.
    Summing over $u$ and $v$, we get
 \begin{align}
   E_{1}&=  {\rm E}\sum_{u=1}^{\Sigma_{j \in A_{i}}X_{j}}\bigg[\sum_{v=1}^{u-2}(u-v) {\rm E} (\Delta^{3} g(W_{i}+v)| X_{A_{ijk}} )\bigg]\nonumber\\
   &\leq  \|\Delta g\| S(W){\rm E}\bigg(X_{i}\sum_{j \in A_{i}}X_{j}\sum_{k \in A_{i}}X_{k}\sum_{l \in A_{i}}X_{l}\bigg).\nonumber
\end{align}
For the first term on the right-hand side  of (\ref{t1t1t1}), note that for $j\in A_{i}$,
\begin{align}\label{2.10}
     \Delta g(W_{i}+1)&= {\rm E}\Delta g(W+1)+[\Delta g(W_{i}+1) - {\rm E}\Delta g(W_{ij}+1)]\notag\\
     & \quad\,+[{\rm E}\Delta g(W_{ij}+1)- {\rm E}\Delta g(W+1)].
\end{align}
Thus, by the independence of $\{X_{i}, X_{j}\}$ and $W_{ij}$, we have
\begin{align}\label{ttt1}
{\rm E}\sum_{j\in A_{i}}X_{j}\Delta g(W_{i}+1)&= {\rm E}\sum_{j\in A_{i}}X_{j}{\rm E}\Delta g(W+1)+{\rm E}\sum_{j\in A_{i}}X_{j}[\Delta g(W_{i}+1)-\Delta g(W_{ij}+1)]\notag\\
&\quad\, +{\rm E}\sum_{j\in A_{i}}X_{j}{\rm E}[\Delta g(W_{ij}+1)-\Delta g(W+1)]\notag\\
&= {\rm E}\sum_{j\in A_{i}}X_{j}{\rm E}\Delta g(W+1)+{\rm E}\sum_{j\in A_{i}}X_{j}\sum_{k\in A_{ij}\backslash A_{i}}X_{k}\Delta^{2} g(W_{ij}+1)\notag\\
&\quad\, -{\rm E}\sum_{j\in A_{i}}X_{j}{\rm E}\sum_{k\in A_{ij}}X_{k}\Delta^{2}g(W_{ij}+1)+E_{2},
\end{align}
where $E_2$ denotes the error term. By a similar argument, we have
\begin{align*}
    E_{2}\leq  \|\Delta g\| S(W){\rm E}\bigg(\sum_{j\in A_{i}}X_{j}{\rm E}\sum_{k\in A_{ij}}X_{k}\sum_{l\in A_{ij}}X_{l}+\sum_{k\in A_{ij}\backslash A_{i}}X_{k}\sum_{l\in A_{ij}\backslash A_{i}}X_{l}\bigg).
\end{align*}
Also, note the following equality:
\begin{align*}
     \Delta^{2} g(W_{ij}+1)&= {\rm E}\Delta^{2} g(W+1)+[\Delta^{2} g(W_{ij}+1) - {\rm E}\Delta^{2} g(W_{ijk}+1)]\notag\\
     &\quad\, +[{\rm E}\Delta^{2} g(W_{ijk}+1)- {\rm E}\Delta^{2} g(W+1)].
\end{align*}
Thus, by the independence of $\{X_{i}, X_{j},X_{k}\}$ and $W_{ijk}$,
\begin{align}\label{ttt21}
    {\rm E}\bigg[\sum_{j\in A_{i}}X_{j}\sum_{k\in A_{ij}\backslash A_{i}}X_{k}\Delta^{2} g(W_{ij}+1)\bigg]= {\rm E}\bigg[\sum_{j\in A_{i}}X_{j}\sum_{k\in A_{ij}\backslash A_{i}}X_{k}\bigg]{\rm E}\Delta^{2} g(W+1)+E_{3}
 \end{align}
 and
 \begin{align}\label{ttt22}
    {\rm E}\sum_{j\in A_{i}}X_{j}{\rm E}\sum_{k\in A_{ij}}X_{k}\Delta^{2}g(W_{ij}+1)&= {\rm E}\sum_{j\in A_{i}}X_{j}{\rm E}\sum_{k\in A_{ij}}X_{k}{\rm E}\Delta^{2}g(W+1)+E_{4},
   \end{align}
where $E_3$ and $E_4$ denote the error terms.
Using similar arguments to derive the upper bound for $E_1$, we have
 \begin{align*}
    E_{3}\leq  \|\Delta g\| S(W)\bigg[{\rm E}\bigg(\sum_{j\in A_{i}}X_{j}\sum_{k\in A_{ij}}X_{k}\bigg){\rm E}\sum_{l\in A_{ijk}}X_{l}+{\rm E}\bigg(\sum_{j\in A_{i}}X_{j}\sum_{k\in A_{ij}}X_{k}\sum_{l\in A_{ijk}\backslash A_{ij}}X_{l}\bigg)\bigg]
\end{align*}
and
\begin{align*}
    E_{4}\leq  \|\Delta g\| S(W)\bigg[{\rm E} \sum_{j\in A_{i}}X_{j}{\rm E}\sum_{k\in A_{ij}}X_{k}{\rm E}\sum_{l\in A_{ijk}}X_{l} +{\rm E}\sum_{j\in A_{i}}X_{j}{\rm E}\bigg(\sum_{k\in A_{ij}}X_{k}\sum_{l\in A_{ijk}\backslash A_{ij}}X_{l}\bigg)\bigg].
\end{align*}
Substituting  (\ref{ttt21}) and (\ref{ttt22}) into (\ref{ttt1}) yields
\begin{align}\label{ttt2}
{\rm E}\sum_{j\in A_{i}}X_{j}\Delta g(W_{i}+1)&= {\rm E}\sum_{j\in A_{i}}X_{j}{\rm E}\Delta g(W+1)+{\rm E}\bigg[\sum_{j\in A_{i}}X_{j}\sum_{k\in A_{ij}\backslash A_{i}}X_{k}\bigg]{\rm E}\Delta^{2} g(W+1)\notag\\
&\quad\, -{\rm E}X_{i}{\rm E}\sum_{j\in A_{i}}X_{j}{\rm E}\sum_{k\in A_{ij}}X_{k}{\rm E}\Delta^{2}g(W+1)+E_{2}+E_{3}+E_{4}.
\end{align}
For the second term on the right-hand side  of (\ref{t1t1t1}), a similar argument gives
\begin{align}\label{txyi} {\rm E}\bigg[\sum_{j\in A_{i}}X_{j}\bigg(\sum_{k\in A_{i}}X_{k}-1\bigg)\Delta^{2} g(W_{i}+1)\bigg]={\rm E}\bigg[\sum_{j\in A_{i}}X_{j}\bigg(\sum_{k\in A_{i}}X_{k}-1\bigg){\rm E}\Delta^{2} g(W+1)\bigg]+E_{5},
\end{align}
where the error term $E_5$ is controlled by
\begin{align*}
    E_{5}\leq \|\Delta g\| S(W)\bigg[{\rm E}\sum_{j\in A_{i}}X_{j}\bigg(\sum_{k\in A_{i}}X_{k}-1\bigg){\rm E}\sum_{k\in A_{ij}}X_{k}+{\rm E}\sum_{j\in A_{i}}X_{j}\bigg(\sum_{k\in A_{i}}X_{k}-1\bigg)\sum_{k\in A_{ij}\backslash A_{i}}X_{k}\bigg].
\end{align*}
Putting (\ref{ttt1})--(\ref{txyi}) together and noting that $E_{1}, \ldots, E_{5}$ are each bounded by $\gamma_{i}\|\Delta g\|S(W)$, we have
\begin{align}\label{T1T1}
      &{\rm E}X_{i} {\rm E}[g(W+1)-g(W_{i}+1)]\notag\\
     &\quad\leq {\rm E} X_{i}{\rm E}\sum_{j\in A_{i}}X_{j}{\rm E}\Delta g(W+1)+{\rm E} X_{i}{\rm E}\bigg[\sum_{j\in A_{i}}X_{j}\sum_{k\in A_{ij}\backslash A_{i}}X_{k}\bigg]{\rm E}\Delta^{2} g(W+1)\notag\\
&\quad\quad -{\rm E} X_{i}{\rm E}\sum_{j\in A_{i}}X_{j}{\rm E}\sum_{k\in A_{ij}}X_{k}{\rm E}\Delta^{2}g(W+1) +\frac{1}{2}{\rm E}X_{i}{\rm E}\bigg[\sum_{j\in A_{i}}X_{j}\bigg(\sum_{k\in A_{i}}X_{k}-1\bigg)\bigg]{\rm E}\Delta^{2} g(W+1)\notag\\
&\quad\quad +O (\gamma_{i}\|\Delta g\| S(W)) ).
\end{align}

 Next, we turn to the second term on the right-hand side  of (\ref{1123}), i.e., $ \sum_{j\in J}{\rm E}X_{j} [g(W)-g(W_{j}+1)]$. Using Newton's expansion again and  similar calculations yields
\begin{align}\label{T1T2}
    &{\rm E}X_{i}  [g(W)-g(W_{i}+1) ]\notag\\
    &\quad\leq  {\rm E}\bigg[X_{i}\bigg(\sum_{j\in A_{i}}X_{j}-1\bigg)\bigg]{\rm E}\Delta g(W+1)     +{\rm E}\bigg[X_{i}\bigg(\sum_{j\in A_{i}}X_{j}-1\bigg)\sum_{k\in A_{ij}\backslash A_{i}}X_{k}\bigg]{\rm E}\Delta^{2} g(W+1)\notag\\
    &\quad\quad -{\rm E}\bigg[X_{i}\bigg(\sum_{j\in A_{i}}X_{j}-1\bigg)\bigg]{\rm E}\sum_{k\in A_{ij}}X_{k}{\rm E}\Delta^{2} g(W+1)\notag\\
    &\quad\quad +\frac{1}{2}{\rm E}\bigg[X_{i}\bigg(\sum_{j\in A_{i}}X_{j}-1\bigg)\bigg(\sum_{j\in A_{i}}X_{j}-2\bigg)\bigg]{\rm E}\Delta^{2} g(W+1) +O(\gamma_{i}\|\Delta g\|S(W)).
\end{align}
In addition, noting that $\Delta g(W+2)=\Delta g(W+1)+\Delta^{2} g(W+1)$,     the third term on the right-hand side  of (\ref{1123}) can be written as
\begin{align}\label{T1T3}
    \omega{\rm E}\Delta g(W+1)+ \eta{\rm E}[\Delta g(W+1)+\Delta g(W+2)]= (\omega+2\eta){\rm E}\Delta g(W+1)+\eta {\rm E}\Delta^{2} g(W+1).
\end{align}
Define
\begin{align}\label{G11}
G_{1}&= \sum_{i\in J}\bigg[{\rm E}X_{i}{\rm E}\sum_{j\in A_{i}}X_{j}-{\rm E}X_{i}\bigg(\sum_{j\in A_{i}}X_{j}-1\bigg)\bigg],\\
    G_{2}&= {\rm E}X_{i}{\rm E}\bigg[\sum_{j\in A_{i}}X_{j}\sum_{k\in A_{ij}\backslash A_{i}}X_{k}\bigg]-{\rm E}\bigg[X_{i}\bigg(\sum_{j\in A_{i}}X_{j}-1\bigg)\sum_{k\in A_{ij}\backslash A_{i}}X_{k}\bigg]\notag\\\label{G22}
&\quad\, -{\rm E}X_{i}{\rm E}\sum_{j\in A_{i}}X_{j}{\rm E}\sum_{k\in A_{ij}}X_{k}+{\rm E}\bigg[X_{i}\bigg(\sum_{j\in A_{i}}X_{j}-1\bigg)\bigg]{\rm E}\sum_{k\in A_{ij}}X_{k}\notag\\
&\quad\, +\frac{1}{2}{\rm E}X_{i}{\rm E}\bigg[\sum_{j\in A_{i}}X_{j}\bigg(\sum_{k\in A_{i}}X_{k}-1\bigg)\bigg]-\frac{1}{2}{\rm E}\bigg[X_{i}\bigg(\sum_{j\in A_{i}}X_{j}-1\bigg)\bigg(\sum_{j\in A_{i}}X_{j}-2\bigg)\bigg].
\end{align}
Combining (\ref{T1T1})--(\ref{T1T3}), we  get
\begin{align}\label{BLE1}
{\rm E}\mathcal{A}_{M_{3}}g(W)\leq (G_{1}+\omega+2\eta){\rm E}\Delta g(W+1)+(G_{2}+\eta){\rm E}\Delta^{2} g(W+1)+O (\gamma \|\Delta g\| S(W) ).
\end{align}

To complete the proof, we need a lemma, which is one of the most important findings of this paper.
\begin{lemma}\label{LEMA4}It holds that
\begin{equation}\label{LE1}
    G_{1}=-\Gamma_{2},\quad  G_{2}=-\frac{\Gamma_{3}}{2}.
\end{equation}
\end{lemma}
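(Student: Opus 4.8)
The plan is to handle $G_1$ by a short direct moment computation and to get $G_2$ from a \emph{test‑function identity} that is then evaluated at two polynomials. For $G_1$ no test function is needed: by (LD1), $X_i$ and $X_j$ are independent whenever $j\notin A_i$, so splitting $m_2=\mathbb{E}W^2=\sum_{i,j\in J}\mathbb{E}X_iX_j$ according to whether $j\in A_i$ gives
\[
m_2=\sum_{i\in J}\mathbb{E}\Big(X_i\sum_{j\in A_i}X_j\Big)+\sum_{i\in J}\mathbb{E}X_i\Big(m_1-\mathbb{E}\sum_{j\in A_i}X_j\Big)=\sum_{i\in J}\mathbb{E}\Big(X_i\sum_{j\in A_i}X_j\Big)+m_1^2-\sum_{i\in J}\mathbb{E}X_i\,\mathbb{E}\sum_{j\in A_i}X_j,
\]
hence $\sum_{i\in J}\big[\mathbb{E}X_i\,\mathbb{E}\sum_{j\in A_i}X_j-\mathbb{E}(X_i\sum_{j\in A_i}X_j)\big]=m_1^2-m_2$; adding $\sum_{i\in J}\mathbb{E}X_i=m_1$ and recalling $\Gamma_2=m_2-m_1^2-m_1$ yields $G_1=m_1^2+m_1-m_2=-\Gamma_2$.

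For $G_2$ I would introduce
\[
H(g):=\sum_{i\in J}\Big[\mathbb{E}X_i\,\mathbb{E}\big(g(W+1)-g(W_i+1)\big)-\mathbb{E}X_i\big(g(W)-g(W_i+1)\big)\Big]
\]
and compute it two ways. First, $X_i$ is independent of $W_i$ by (LD1), so $\mathbb{E}X_ig(W_i+1)=\mathbb{E}X_i\,\mathbb{E}g(W_i+1)$, the $g(W_i+1)$‑terms cancel, and $H(g)=\mu\,\mathbb{E}g(W+1)-\mathbb{E}[Wg(W)]$ whenever these expectations are finite. Second, summing \eqref{T1T1} and \eqref{T1T2} over $i\in J$ and collecting the coefficients of $\mathbb{E}\Delta g(W+1)$ and $\mathbb{E}\Delta^2 g(W+1)$, which by construction are exactly the quantities $G_1$ and $G_2$ of \eqref{G11}--\eqref{G22}, expresses $H(g)$ as $G_1\,\mathbb{E}\Delta g(W+1)+G_2\,\mathbb{E}\Delta^2 g(W+1)$ plus the error terms $E_1,\dots,E_5$ and their analogues from the second group. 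When $g$ is a polynomial with $\deg g\le 2$, every finite difference $\Delta^j g$ with $j\ge 3$ is identically $0$; since each of these error terms, written out exactly (e.g.\ the Newton remainder \eqref{E1}), is a sum of expressions each carrying a factor $\Delta^j g$ with $j\ge 3$, they all vanish. Thus, for every polynomial $g$ with $\deg g\le 2$,
\[
\mu\,\mathbb{E}g(W+1)-\mathbb{E}\big[Wg(W)\big]=G_1\,\mathbb{E}\Delta g(W+1)+G_2\,\mathbb{E}\Delta^2 g(W+1).
\]
Evaluating at $g(k)=k$ re‑gives $G_1=\mu(m_1+1)-m_2=-\Gamma_2$, and evaluating at $g(k)=\binom{k}{2}$ (so $g(0)=0$, $\Delta g(k)=k$, $\Delta^2 g\equiv 1$), using $\mathbb{E}\binom{W+1}{2}=\tfrac12(m_2+m_1)$ and $\mathbb{E}[W\binom{W}{2}]=\tfrac12(m_3-m_2)$ together with the value of $G_1$, gives
\[
G_2=\tfrac12\big(m_1m_2+m_1^2+m_2-m_3\big)-(m_1^2+m_1-m_2)(m_1+1)=-\tfrac12\big(m_3-3m_1m_2-3m_2+2m_1^3+3m_1^2+2m_1\big)=-\tfrac{\Gamma_3}{2}.
\]

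The point requiring the most care is the exactness claim: the displays \eqref{t1t1t1}--\eqref{txyi} are stated as inequalities with $O(\gamma_i\|\Delta g\|S(W))$ errors, so to use them for a polynomial $g$ one must return to the \emph{exact} Newton remainder \eqref{E1} and the exact conditioning identities and verify term by term that each discarded quantity carries a factor $\Delta^j g$ with $j\ge 3$ (so that it is genuinely zero, not merely small, when $\deg g\le 2$); one also tacitly needs $\mathbb{E}W^3<\infty$ (implicit as soon as $\Gamma_3$ appears) so that $\mathbb{E}[Wg(W)]$ is finite for $g=\binom{\cdot}{2}$. If one prefers to avoid the Newton expansion altogether, $G_2=-\Gamma_3/2$ can instead be obtained by brute force: expand the six groups of terms in \eqref{G22}, sum over $i\in J$, and apply (LD1)--(LD3) repeatedly to complete the local sums of $\mathbb{E}X_iX_jX_k$ into $m_3=\mathbb{E}W^3$, the residual pieces assembling into $m_1^3,\ m_1m_2,\ m_2,\ m_1$; this is elementary but is where essentially all of the bookkeeping lives, and I would expect it to be the principal obstacle either way.
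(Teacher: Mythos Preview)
Your argument for $G_1$ is the same as the paper's. For $G_2$, however, you take a genuinely different and rather elegant route. The paper proves $G_2=-\Gamma_3/2$ by a direct moment computation: it rewrites $G_2$ (with the implicit $\sum_{i\in J}$) as in \eqref{11}, then uses (LD1)--(LD3) repeatedly---via the identity \eqref{G121} with $B=A_{ij}$ and the decomposition $W=X_{A_i}^*+X_{A_i^c}^*$---to convert the various local sums into the global moments $m_1,m_2,m_3$, arriving at the closed form after a page of algebra. Your approach instead recognizes that the whole Newton--expansion machinery \eqref{t1t1t1}--\eqref{txyi} becomes an \emph{exact} identity once $\Delta^3 g\equiv 0$, and that the resulting identity $H(g)=G_1\,\mathbb{E}\Delta g(W+1)+G_2\,\mathbb{E}\Delta^2 g(W+1)$ can be matched against the elementary evaluation $H(g)=\mu\,\mathbb{E}g(W+1)-\mathbb{E}[Wg(W)]$ coming from (LD1). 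Plugging in $g(k)=\binom{k}{2}$ then reads off $G_2$ in one line of moment algebra. This is correct; the cancellation $\mathbb{E}[X_ig(W_i+1)]=\mathbb{E}X_i\,\mathbb{E}g(W_i+1)$ is exactly (LD1), and every remainder $E_1,\dots,E_5$ (and its counterpart in \eqref{T1T2}) is, as you note, a genuine sum of terms each carrying a factor $\Delta^3 g$, hence identically zero for $\deg g\le 2$.

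What you gain is conceptual clarity: your identity explains structurally why $G_1$ and $G_2$ are (negatives of) the second and third factorial cumulants---they are the Taylor coefficients of $\mu\,\mathbb{E}g(W+1)-\mathbb{E}[Wg(W)]$ in the Newton basis---and the method visibly generalizes to higher $G_j$ by taking $g(k)=\binom{k}{j}$. What the paper's computation buys is self-containment: it does not need to revisit the error terms to check exactness, and it works entirely at the level of moments without invoking the Stein expansion a second time. Your caveat about returning to the exact remainders (rather than the stated $O(\cdot)$ bounds) is the one place that needs care, and you have identified it correctly.
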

\begin{proof}
Note that for every $B\supseteq A_{i}$,
\begin{equation}\label{G121}
    \sum_{i\in J}\sum_{j\in B}{\rm E}X_{i}X_{j}-\sum_{i\in J}\sum_{j\in B}{\rm E}X_{i}{\rm E}X_{j}=\operatorname{Var}W .
\end{equation}
Taking $A_{i}$  as  $B$, we get
\begin{align*}
    G_{1}&= \sum_{i\in J}\bigg[{\rm E}X_{i}{\rm E}\sum_{j\in A_{i}}X_{j}-{\rm E}X_{i}\bigg(\sum_{j\in A_{i}}X_{j}-1\bigg)\bigg]\\
    &= -\operatorname{Var}W+{\rm E}W=-\Gamma_{2}.
\end{align*}
 It remains to prove $G_{2}=-\frac{\Gamma_{3}}{2}$. Rearrange $G_{2}$ as follows:
\begin{align}\label{11}
 G_{2}&= -\sum_{i\in J}{\rm E}X_{i}
 +\sum_{i\in J}\sum_{k\in A_{ij}}({\rm E}X_{i}X_{k}-{\rm E}X_{i}{\rm E}X_{k})+\sum_{i\in J}\sum_{j\in A_{i}}\bigg (\frac{1}{2}{\rm E}X_{i}X_{j}-\frac{1}{2}{\rm E}X_{i}{\rm E}X_{j} \bigg )\notag\\
&\quad\, +\sum_{i\in J}\sum_{j\in A_{i}}\sum_{k\in A_{ij}}({\rm E}X_{i}{\rm E}X_{j}X_{k}-{\rm E}X_{i}{\rm E}X_{j}{\rm E}X_{k}-{\rm E}X_{i}X_{j}X_{k}+{\rm E}X_{i}X_{j}{\rm E}X_{k})\notag\\
 &\quad\, +\sum_{i\in J}\sum_{j,k \in A_{i}} \bigg(-\frac{1}{2}{\rm E}X_{i}{\rm E}X_{j}X_{k}+\frac{1}{2}{\rm E}X_{i}X_{j}X_{k} \bigg).
\end{align}
 Taking $B$ as $A_{ij}$ in (\ref{G121}), we have
\begin{align}\label{1132}
    \sum_{i\in J}\sum_{k\in A_{ij}}({\rm E}X_{i}X_{k}-{\rm E}X_{i}{\rm E}X_{k})+\sum_{i\in J}\sum_{j\in A_{i}} \bigg(\frac{1}{2}{\rm E}X_{i}X_{j}-\frac{1}{2}{\rm E}X_{i}{\rm E}X_{j} \bigg )=\frac{3}{2}\operatorname{Var}W.
\end{align}
By the independence of $X_{i}$ and $\{X_{j}, J\notin A_{i}\}$,
\begin{align}\label{11123}
    & \sum_{i\in J}\sum_{j\in A_{i}}\sum_{k\in A_{ij}} ({\rm E}X_{i}{\rm E}X_{j}X_{k}-{\rm E}X_{i}{\rm E}X_{j}{\rm E}X_{k}-{\rm E}X_{i}X_{j}X_{k}+{\rm E}X_{i}X_{j}{\rm E}X_{k} )\notag\\
    &\quad= \sum_{i\in J}\sum_{j\in A_{i}} ({\rm E}X_{i}{\rm E}X_{j}W-{\rm E}X_{i}{\rm E}X_{j}{\rm E}W-{\rm E}X_{i}X_{j}W+{\rm E}X_{i}X_{j}{\rm E}W )\notag\\
    &\quad= \sum_{i\in J}\sum_{j\in A_{i}} ({\rm E}X_{i}{\rm E}X_{j}W-{\rm E}X_{i}X_{j}W )+(\operatorname{Var}W){\rm E}W.
\end{align}
Substituting  (\ref{1132}) and (\ref{11123}) into (\ref{11}), we see that
\begin{align}\label{T1T4}
    G_{2}&= \sum_{i\in J}\sum_{j\in A_{i}} ({\rm E}X_{i}{\rm E}X_{j}W-{\rm E}X_{i}X_{j}W )+\sum_{i\in J}\sum_{j,k\in A_{i}} \bigg(-\frac{1}{2}{\rm E}X_{i}{\rm E}X_{j}X_{k}+\frac{1}{2}{\rm E}X_{i}X_{j}X_{k} \bigg)\notag\\
    &\quad\, +(\operatorname{Var}W){\rm E}W+\frac{3}{2}\operatorname{Var}W-{\rm E}W.
\end{align}
In addition, noting that $W=X_{A_{i}}^{*}+X_{A_{i}^{c}}^{*}$, we have
\begin{align}\label{10}
& \sum_{i\in J}\sum_{j\in A_{i}} ({\rm E}X_{i}{\rm E}X_{j}W-{\rm E}X_{i}X_{j}W )+\sum_{i\in J}\sum_{j,k\in A_{i}} \bigg(-\frac{1}{2}{\rm E}X_{i}{\rm E}X_{j}X_{k}+\frac{1}{2}{\rm E}X_{i}X_{j}X_{k}\bigg )\notag\\
&\quad= \frac{1}{2}\sum_{i\in J} [{\rm E}X_{i}(X_{A_{i}}^{*})^{2}-{\rm E}X_{i}{\rm E}(X_{A_{i}}^{*})^{2}+2({\rm E}X_{i}{\rm E}X_{A_{i}}^{*}W - {\rm E}X_{i}X_{A_{i}}^{*}W) ]\notag\\
   &\quad=  \frac{1}{2}\sum_{i\in J} [-{\rm E}X_{i}(X_{A_{i}}^{*})^{2}+{\rm E}X_{i}{\rm E}(X_{A_{i}}^{*})^{2}- 2{\rm E}X_{i}X_{A_{i}}^{*}X_{A_{i}^{c}}^{*}+2{\rm E}X_{i}{\rm E}X_{A_{i}}^{*}X_{A_{i}^{c}}^{*} ]\notag\\
   &\quad=   \frac{1}{2}\sum_{i\in J} [- {\rm E}X_{i}(X_{A_{i}}^{*}+X_{A_{i}^{c}}^{*})^{2}+{\rm E}X_{i}{\rm E}(X_{A_{i}}^{*}+X_{A_{i}^{c}}^{*})^{2} ] \notag\\
   &\quad=  \frac{1}{2}\sum_{i\in J} ( -{\rm E}X_{i}W^{2}+{\rm E}X_{i}{\rm E}W^{2} )\notag\\
   &\quad=-\frac{{\rm E}W^{3}}{2}+\frac{{\rm E}W{\rm E}W^{2}}{2}.
\end{align}
Substituting (\ref{10}) into (\ref{T1T4}), we get
\begin{align*}
    G_{2}&= -\frac{{\rm E}W^{3}}{2}+\frac{{\rm E}W{\rm E}W^{2}}{2}+(\operatorname{Var}W){\rm E}W+\frac{3}{2}\operatorname{Var}W-{\rm E}W\\
    &= -\frac{\Gamma_{3}}{2}-\frac{3}{2}{\rm E}W({\rm E}W^{2}+{\rm E}W+\Gamma_{2})-\frac{3}{2} ({\rm E}W^{2}+{\rm E}W+\Gamma_{2})+{\rm E}W^{3}+\frac{3}{2}{\rm E}W^{2}+{\rm E}W\\
    &\quad\, +\frac{{\rm E}W({\rm E}W^{2}+{\rm E}W+\Gamma_{2})}{2}+({\rm E}W+\Gamma_{2}) {\rm E}W+\frac{3}{2}({\rm E}W+\Gamma_{2})-{\rm E}W\\
    &= -\frac{\Gamma_{3}}{2},
\end{align*}
as desired.
\end{proof}

  We now continue to prove Theorem \ref{main}. Substituting (\ref{LE1}) into (\ref{BLE1}) and using the relationship~(\ref{k3}),  we  obtain
$${\rm E}\mathcal{A}_{M_{3}}g(W)=O (\gamma \|\Delta g\| S(W) ).$$
Replacing $g$ with $g_A,$ where $ A\subset \mathbb{Z}^{+}$, and  using
 Lemma~\ref{s}, we complete the proof of (iii).

Now, turn to (i) and (ii).  Following the line of the proof of (iii), one can prove
$${\rm E}\mathcal{A}_{M_{1}}g(W)=\bigg(G_{1}-np^{2}-\frac{\delta p^2}{q}\bigg){\rm E}\Delta g(W+1)+ \bigg(\frac{G_{2}}{q}+\frac{np^{3}}{q} \bigg){\rm E}\Delta^{2}g(W+1)+
 \frac{1}{q} O(\gamma \|\Delta g\|S(W))
$$
and
$${\rm E}\mathcal{A}_{M_{2}}g(W)=\bigg(G_{1}+\frac{rq^{2}}{p^{2}}\bigg){\rm E}\Delta g(W+1)+
 \bigg(pG_{2}+\frac{rq^{3}}{p^{2}} \bigg){\rm E}\Delta^{2}g(W+1)+
 (p\vee q) O(\gamma \|\Delta g\|S(W)),$$
where $\delta$ is defined as in (\ref{K1}), and $G_{1}$ and $G_{2}$ are defined as in (\ref{G11}) and (\ref{G22}).
 Applying Lemma \ref{LEMA4}, we obtain
$${\rm E}\mathcal{A}_{M_{1}}g(W)=
 \frac{1}{q} O(\gamma \|\Delta g\| S(W)+p^2\delta {\rm E}\Delta g(W+1))
$$
and
$${\rm E}\mathcal{A}_{M_{2}}g(W)=
(p\vee q)O(\gamma \|\Delta g\|S(W)),$$
which, together with Lemma \ref{s},   concludes the proofs of (i) and (ii).

\section{Applications}\label{S22}
\vspace{-2mm}
\subsection{Counting vertices where all edges point inward}
In this subsection, we consider a problem studied
by  Arratia et al.~\cite{arratia1989two}. Let ${\cal V}=\{0,1\}^{d}$ and ${\cal E}=\{(u,v): u,v \in {\cal V},\,\text{and}\ d_{H}(u,v)=1\}$, where $d\ge 2$ and $d_{H}$ denotes the Hamming distance in ${\cal V}$. Assume that each edge in ${\cal E}$ is assigned a random
direction by tossing a fair coin. Denote by $J$  the set of all $2^{d}$ vertices, and let  $X_{i}$ be the indicator that all edges of vertex $i$  point inward.

 \begin{figure}[h!]
\centering
\begin{tikzpicture}
  \path [draw=black,postaction={on each segment={mid arrow=black}}]
(0,0)-- (2,1)
(1,2)--(2,1)
(-1,1)--(1,2)
 (-1,1)-- (0,0)
(0,-2)--(0,0)
(2,-1)--(0,-2)
 (1,0)--(2,-1)
 (-1,-1)--(1,0)
(-1,-1)-- (0,-2)
(2,-1)--(2,1)
(1,0)--(1,2)
 (-1,1)-- (-1,-1);
\end{tikzpicture}
  \caption{An configuration in $\{0,1\}^{3}$}
    \label{FI13}
\end{figure}

Set
$$A_{i}=\{j:|j-i|=1\},\quad A_{ij}=\{k:j\in A_{i};\,|k-i|=1\;{\rm or} \;|k-j|=1\},$$
and
$$A_{ijk}=\{l: j\in A_{i};\, k\in A_{ij}; \,|l-i|=1,|l-j|=1\; {\rm or} \;|l-k|=1\}.$$
Obviously, $\sharp A_{i}=d$, $\sharp A_{ij}\le 2d$, and $\sharp A_{ijk}\le 3d$. It is also easy to verify that the conditions (LD1)--(LD3) are satisfied for $\{X_{i}, i\in J\}$ with $A_i, A_{ij}$, and $A_{ijk}$.  We are concerned with the number of vertices at which
all $d$ edges point inward. In particular, set $W =\sum_{i\in J}X_{i}$.  Figure 1 illustrates a configuration in $\{0, 1\}^3$ with $W=1$.
\begin{theorem}\label{T2}
Under the setting described above, we have
\begin{align}\label{R1}
   d_\mathrm{TV}(W, M_3) = O(d^{3}2^{-3d}),
\end{align}
where $\{\lambda,\omega,\eta\}$ satisfy the following equations$:$
\begin{equation}\label{616}
     \lambda+\omega+\eta=1,\quad\omega+2\eta=(d-1)2^{-d},
     \quad2\eta=(3d^{2}+3d+2)2^{-2d+1}.
 \end{equation}
\end{theorem}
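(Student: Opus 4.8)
\textbf{Proof proposal for Theorem \ref{T2}.}

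The plan is to apply part (iii) of Theorem \ref{main} directly, so the real work is to compute (or sharply bound) the three quantities that enter the bound: the factorial cumulants $\Gamma_1(W), \Gamma_2(W), \Gamma_3(W)$ (which determine $\lambda, \omega, \eta$ via \eqref{K3} and hence verify \eqref{616}), the dependency-neighbourhood sum $\gamma$, and the smoothness functional $S(W)$. First I would record that each $X_i$ is an indicator with $\mathbb{P}(X_i=1)=2^{-d}$, since vertex $i$ has exactly $d$ incident edges and each must independently point inward; hence $\mu=\mathbb{E}W=2^d\cdot 2^{-d}=1$, which already forces $\Gamma_1(W)=1$ and is consistent with the first equation in \eqref{616}. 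For $\Gamma_2(W)=\operatorname{Var}(W)-\mathbb{E}W$ I would compute $\mathbb{E}X_iX_j$ for adjacent $i,j$: if $d_H(i,j)=1$ the two vertices share exactly one edge, so the joint event that both have all edges inward is impossible unless\dots in fact the shared edge cannot point inward to both, so $\mathbb{E}X_iX_j=0$ for $j\in A_i$, while $X_i$ and $X_j$ are independent for $j\notin A_i$ because then they share no edge. Summing, $\operatorname{Var}(W)=\sum_i\operatorname{Var}(X_i)+\sum_{i}\sum_{j\in A_i,\,j\ne i}(\mathbb{E}X_iX_j-\mathbb{E}X_i\mathbb{E}X_j)=2^d(2^{-d}-2^{-2d})-2^d\cdot d\cdot 2^{-2d}=1-(d+1)2^{-d}$, whence $\Gamma_2(W)=-(d+1)2^{-d}$; combined with $\Gamma_3(W)$ this should reproduce $\omega+2\eta=(d-1)2^{-d}$ after using $\omega+2\eta=\Gamma_2(W)-\Gamma_3(W)+2\cdot\tfrac12\Gamma_3(W)$\dots more carefully, $\omega+2\eta=\Gamma_2(W)$ is false; rather from \eqref{K3}, $\omega=\Gamma_2-\Gamma_3$ and $2\eta=\Gamma_3$, so $\omega+2\eta=\Gamma_2$, which would give $-(d+1)2^{-d}$, not $(d-1)2^{-d}$. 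The sign discrepancy signals that I must recompute $\Gamma_2$ with the correct treatment of triple overlaps, or that $\mathbb{E}X_iX_j\ne 0$ after all — actually a shared edge pointing inward to $i$ points outward from $j$, so indeed $\mathbb{E}X_iX_j=0$; I would therefore re-examine which equation in \eqref{616} is $\omega+2\eta$ versus $\Gamma_2$, keeping in mind $\Gamma_2<0$ here so part (iii)'s hypothesis $\mu\approx\sigma^2$ is the relevant regime since $\Gamma_2=O(d2^{-d})\to 0$.

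Second, I would compute $\Gamma_3(W)$ via the formula $\Gamma_3(W)=m_3-3m_1m_2-3m_2+2m_1^3+3m_1^2+2m_1$, which needs $\mathbb{E}X_iX_jX_k$ for all triples with $j\in A_i$, $k\in A_{ij}$. Since $\mathbb{E}X_iX_j=0$ whenever two of the indices are adjacent, the only surviving triples in $m_3=\sum_{i,j,k}\mathbb{E}X_iX_jX_k$ are those where no two of $i,j,k$ are adjacent (contributing $2^{-3d}$ each by independence) plus the diagonal-type terms where indices coincide. Counting: there are $2^d$ terms with $i=j=k$ (value $2^{-d}$), $3\cdot 2^d\cdot(\text{number of }j\ne i)$ of the form $i=j\ne k$, etc., and then the fully-distinct pairwise-non-adjacent triples. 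Assembling these counts and substituting into the $\Gamma_3$ formula should yield $\Gamma_3(W)=(3d^2+3d+2)2^{-2d+1}+(\text{lower order})$ — more plausibly the leading term is exactly $(3d^2+3d+2)2^{-2d+1}$ if one is careful, matching the third equation of \eqref{616}. This is the most computation-heavy step and the one most prone to bookkeeping errors; I would organize it by the number of "coincidence patterns" among $(i,j,k)$ and by whether surviving distinct indices are mutually non-adjacent, using $\sharp A_i=d$ and the fact that in the hypercube the number of vertices at Hamming distance $\le 2$ from a fixed vertex is $1+d+\binom d2$.

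Third, I would bound $\gamma$ and $S(W)$. For $\gamma$: every term in $\gamma_i$ is a product of four single-variable expectations $\mathbb{E}X_\ell=2^{-d}$ (when the $\mathbb{E}$ is applied) or of $X_\ell$'s whose expectation is again $O(2^{-d})$, and the ranges of $j,k,\ell$ have sizes $\sharp A_i=d$, $\sharp A_{ij}\le 2d$, $\sharp A_{ijk}\le 3d$, so each $\gamma_i=O(d^3\cdot 2^{-4d})$ up to the bounded number of $\sum_{(\mathbb{E})}$ choices; summing over the $2^d$ vertices gives $\gamma=O(d^3 2^{-3d})$. For $S(W)$: since $W$ is a sum of indicators and conditioning on $X_{A_{ijk}}$ leaves $W_{ijk}$ a sum of $O(2^d)$ weakly-dependent indicators with mean of order $2^d\cdot 2^{-d}=O(1)$, the second-difference total-variation functional $\mathcal S_2(W\mid X_{A_{ijk}})$ is $O(1)$ (indeed bounded by a constant, using that a sum of many $\{0,1\}$ variables with bounded mean has a distribution whose second differences sum to $O(1)$); hence $S(W)=O(1)$. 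Then part (iii) of Theorem \ref{main} gives $d_{TV}(W,M_3)\le C\gamma S(W)/((1-2\theta_3)\mu)$ with $\mu=1$, $\theta_3=(\omega+2\eta)/(\lambda+\omega+\eta)=O(d2^{-d})\to 0$ so $1-2\theta_3$ is bounded below, yielding $d_{TV}(W,M_3)=O(d^3 2^{-3d})$ as claimed. The main obstacle I anticipate is the exact evaluation of $\Gamma_3(W)$ — specifically getting the constant $(3d^2+3d+2)$ and the sign/magnitude of $\Gamma_2(W)$ right — together with a clean argument that $S(W)=O(1)$ uniformly in $d$; both are "elementary but lengthy," consistent with the paper's remark that such computations are deferred to an appendix.
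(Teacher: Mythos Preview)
Your approach is exactly the paper's: apply Theorem~\ref{main}(iii) after computing the cumulants and bounding $\gamma$ and $S(W)$. Two points worth noting. First, the bound on $S(W)$ is trivial and needs no structural argument: for any integer-valued law, $\mathcal S_2(W\mid\cdot)\le\sum_k(p_k+2p_{k-1}+p_{k-2})=4$, and the paper simply invokes $S(W)\le 4$. Second, your computation $\Gamma_2(W)=-(d+1)2^{-d}$ is in fact correct (since $\mathbb E X_iX_j=0$ for adjacent $i,j$ and independence holds otherwise, one gets $\operatorname{Var}W=1-(d+1)2^{-d}$); the paper's stated value $(d-1)2^{-d}$ appears to be a slip. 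This only affects the exact constants in \eqref{616}, not the order of the bound: what matters is $|\Gamma_2(W)|=O(d\,2^{-d})\to 0$, so $\theta_3\to 0$ and part~(iii) applies with $\gamma=O(d^3 2^{-3d})$, $\mu=1$, yielding the claimed $O(d^3 2^{-3d})$.
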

\begin{remark}
     Arratia et al.~\cite{arratia1989two}  used ${\cal P}(1)$ to approximate $W$ and obtained
    \begin{equation}\label{R2}
        d_\mathrm{TV}(W,\mathcal{P}(1))\leq d2^{-d}.
    \end{equation}
Observably,  (\ref{R1}) is better than (\ref{R2}) in the numerical approximation of probability  at the cost of calculating the  probability value of the three-parameter model ${\cal P}(\lambda)  \ast 2{\cal P}(\omega/2)\ast 3{\cal P}(\eta/3)$.
\end{remark}
\begin{proof}[Proof of Theorem $\ref{T2}$]
By simple but lengthy calculations,
 $$ \Gamma_{1}(W)=1, \quad \Gamma_{2}(W)=(d-1)2^{-d},\quad \Gamma_{3}(W)=(3d^{2}+3d+2)2^{-2d+1}.$$
 We use $M_{3}={\cal P}(\lambda)  \ast 2{\cal P}(\omega/2)\ast 3{\cal P}(\eta/3)$ to approximate $W$, where the three parameters $\lambda$, $\omega$, and $\eta$ are exactly determined by (\ref{616}).

To apply  Theorem \ref{main}(iii), we need to further control $\gamma$, $S(W)$, and $\theta_3$. First, it is immediate that $S(W)\le 4$.
Also, it follows that $\theta_3=(d-1)2^{-d}$. Finally, notice that ${\rm E}X_i= 2^{-d}$ and ${\rm E}X_iX_j=0$ whenever $d_{H}(i,j)=1$. Thus, it follows that $\gamma \leq Cd^{3}2^{-3d}$.

We now complete the proof of Theorem \ref{T2}.
\end{proof}

\subsection{Birthday problem}
In this subsection, we consider the well-known birthday problem.  It can be equivalently described as follows: Fix $k\ge 2$, randomly place  $n$ balls into $d$ boxes  with equal probability, and estimate the probability that at least one box receives $k$ or more balls.

Let $J=\{i:i \in \{1,2,\ldots,n\},  \, \sharp i =k\}$. For each $i\in J$, denote
$$X_{i}=\left\{\!
\begin{aligned}
1 & \quad \text{if the balls indexed by $i$ all go into the same box,}& \\
0  &\quad \text{otherwise}.& \\
\end{aligned}
\right.$$
Set
\begin{align*}
    & A_{i}=\{j:j\cap i \neq \emptyset\},\quad A_{ij}=\{m:m\cap i \neq \emptyset\ \text{or}\ m\cap j \neq \emptyset\},\\
   &    A_{ijm}=\{l:l\cap i \neq \emptyset, l\cap j \neq \emptyset,\ \text{or}\ l\cap m \neq \emptyset\}.
\end{align*}
Clearly, $\sharp(A_i)$, $\sharp(A_{ij})$, and $\sharp(A_{ijm})$ are all $O(n^{k-1})$. It is immediate that $\{X_i, i\in J\}$  satisfies  (LD1)--(LD3) with $A_{i}$, $A_{ij}$, and $A_{ijm}$, and that $p:={\rm P}(X_{i}=1)=d^{1-k}$. Define $W=\sum_{i\in J}X_{i}$. Thus, $${\rm P}(\text{no box gets}\; k \; \text{or more balls}) = {\rm P}(W = 0).$$
Simple calculations give
$$  m_1={\rm E}W= \sum_{i\in J}{\rm E}X_{i}=\binom n kd^{1-k} $$
  and
  \begin{align*}
    m_{2}&= {\rm E}W^2={\rm E}\sum_{i\in J}X_{i}\bigg[X_{i}+\sum_{j\in A_{i}\backslash\{i\}}X_{j}+\sum_{J\backslash A_{i}}X_{j}\bigg]\\
    &= m_{1}+\sum_{i\in J}\sum_{j=1}^{k-1}\binom k j\binom {n-k}{k-j} d^{1+j-2 k}+\binom nk\binom {n-k}kd^{2-2k}.
\end{align*}
By $\Gamma_{2}(W)=m_{2}-m_{1}^{2}-m_{1}$, we have
\begin{align*}
    \Gamma_{2} (W)=\sum_{i\in J}\sum_{j=1}^{k-1}\binom k j\binom {n-k}{k-j} d^{1+j-2 k}+\binom n k\bigg[\binom {n-k}k-\binom n k\bigg] d^{2-2k}.
\end{align*}
\begin{lemma}\label{L3.5}
    Let $$a_j=\sum_{l=1}^{j}\sum_{m=(j-l)\vee 1}^{k-l}\binom{j}{m}\binom{j}{l}\binom{k-j}{k-l-m}.$$ Define $$\Tilde{\Gamma}_{1}=\frac{n^{k}}{k!d^{k-1}},\quad\Tilde{\Gamma}_{2}=\sum_{j=2}^{k-1}\binom{k}{j}\frac{n^{2k-j}}{k!(k-j)!d^{2k-j-1}},\quad\Tilde{\Gamma}_{3}=3\sum_{j=2}^{k-1}\binom{k}{j}\frac{a_jn^{2k-j}}{k!(k-j)!d^{2k-j-1}}.$$
   Suppose both $n$ and $d$ tend  to $\infty$ such that $n^{k} \asymp d^{k-1}$. We have    for $i=1,2,3$,
    \begin{equation}
      | \Gamma_{i}(W)-\Tilde{\Gamma}_{i}|\leq \frac{C}{n}. \notag
    \end{equation}
\end{lemma}
\begin{proof}
    It is easy to calculate
 \begin{equation*}\label{G1}
        |\Gamma_{1}(W)-\Tilde{\Gamma}_{1}|=\frac{n^{k}-n(n-1)\cdots(n-k+1)}{k!d^{k-1}}\leq \frac{kn^{k-1}}{d^{k-1}}\leq \frac{C}{n}
    \end{equation*}
and
\begin{align*}\label{G2}
    |\Gamma_{2}(W) - \Tilde{\Gamma}_{2}| &\leq \sum_{j=2}^{k-1} \binom{n}{k}\binom{k}{j} \left(\frac{n^{k-j}}{(k-j)!}-\binom{n-k}{k-j}\right) d^{1+j-2k} +k\binom{n}{k}\binom{n-k}{k-1}d^{2-2k}
    \notag \\
    &\quad + \binom{n}{k} \left[\binom{n-k}{k} - \binom{n}{k}\right] d^{2-2k} \notag \\
    &\leq C \bigg[\bigg(\sum_{j=2}^{k-1}n^{2k-j-1} d^{1+j-2k}\bigg) + n^{2k-1} d^{2-2k} \bigg] \notag \\
    &\leq \frac{C}{n}.
\end{align*}
For $|\Gamma_{3}(W)-\Tilde{\Gamma}_{3}|$, note that
$$m_{3}={\rm E}W^3=\sum_{\{i,j,k\}\in J^{3}}{\rm E}X_{i}X_{j}X_{k}.$$
Consider the following five cases separately. Let
\begin{align*}
    &J_{1}=\{\{i,j,m\}:\sharp\{i,j,m\}=1\},\notag\\
    &J_{2}=\{\{i,j,m\}:\sharp\{i,j,m\}=2\},\notag\\
    &J_{3}=\{\{i,j,m\}:\sharp\{i,j,m\}=3, \,\sharp(i\cup j \cup m)=3k\},\notag\\
    &J_{4}=\{\{i,j,m\}:\sharp\{i,j,m\}=3,\,i\cap (j\cup m)= \emptyset\;\text{or}\;(i\cup j)\cap m= \emptyset\;\text{or}\;j\cap (i\cup m)= \emptyset,\, \sharp(i\cup j \cup m)<3k\},\notag\\
    & J_{5}=J^{3}\backslash (J_{1}\cup J_{2}\cup J_{3}\cup J_{4}). \notag
\end{align*}
Simple calculations give
\begin{align*}
   & \sum_{\{i,j,m\}\in J_{1}}{\rm E}X_{i}X_{j}X_{m}={\rm E}W,\notag\\
   &   \sum_{\{i,j,m\}\in J_{2}}{\rm E}X_{i}X_{j}X_{m}=3({\rm E}W^{2}-{\rm E}W),\notag\\
    &\sum_{\{i,j,m\}\in J_{3}}{\rm E}X_{i}X_{j}X_{m}=({\rm E}W)^{3}+O \bigg(\frac{1}{n}\bigg ),\notag\\
      &\sum_{\{i,j,m\}\in J_{4}}{\rm E}X_{i}X_{j}X_{m}=3{\rm E}W^{2}{\rm E}W-3({\rm E}W)^{3}-3({\rm E}W)^{2}+O \bigg(\frac{1}{n}\bigg ),\notag\\
    &\sum_{\{i,j,m\}\in J_{5}}{\rm E}X_{i}X_{j}X_{m}=3\sum_{j=2}^{k-1}\binom{k}{j}\frac{a_jn^{2k-j}}{k!(k-j)!d^{2k-j-1}}+O \bigg(\frac{1}{n} \bigg).\notag
\end{align*}
Now, $\Gamma_{3}(W)$ can be calculated  by $ m_{3}-3m_{1}m_{2}-3m_{2}+2m_{1}^{3}+3m_{1}^{2}+2m_{1}$.  Direct but laborious computations yield
\begin{equation}\label{G3}
    |\Gamma_{3}(W)-\Tilde{\Gamma}_{3}|\leq Cn^{-1}. \notag
\end{equation}
The proof is now complete.
\end{proof}

\begin{theorem}\label{3.6}
    Suppose that both $n$ and $d$ tend  to $\infty$ such that $n^{k} \asymp d^{k-1}$. We have
     \begin{align}&\label{l371}
         d_\mathrm{TV}(W, M_3)\leq Cn^{-\frac{k}{k-1}},\\
  &\label{l372}
         d_{\rm loc}(W, M_3)\leq Cn^{-\frac{k}{k-1}},
    \end{align}
where  $\{\lambda,\omega,\eta\}$ is exactly determined by
\begin{equation}\label{112}
     \lambda+\omega+\eta=\Gamma_{1}(W),\quad \omega+2\eta=\Gamma_{2}(W),\quad 2\eta=\Gamma_{3}(W).
\end{equation}
\end{theorem}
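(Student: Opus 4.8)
The plan is to derive both bounds from part $(iii)$ of Theorem~\ref{main} together with the Stein identity furnished by its proof; throughout we use the regime $n^{k}\asymp d^{k-1}$, i.e.\ $d\asymp n^{k/(k-1)}$.

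\emph{Step 1 (reduction to case $(iii)$, and the cheap ingredients).} By Lemma~\ref{L3.5}, $\mu=\Gamma_{1}(W)=\tilde{\Gamma}_{1}+O(n^{-1})$ with $\tilde{\Gamma}_{1}=n^{k}/(k!\,d^{k-1})\asymp 1$, while $\Gamma_{2}(W)=O(n^{-1/(k-1)})\to0$ and $\Gamma_{3}(W)\to0$; hence $\sigma^{2}=\operatorname{Var}W=\Gamma_{1}(W)+\Gamma_{2}(W)=\mu(1+o(1))$, so $\mu\approx\sigma^{2}$. One checks that the parameters $\{\lambda,\omega,\eta\}$ of \eqref{112} satisfy $\lambda+\omega+\eta=\Gamma_{1}(W)\asymp1$, $\omega+2\eta=\Gamma_{2}(W)=o(1)$, $2\eta=\Gamma_{3}(W)=o(1)$, so for $n$ large they are nonnegative ($M_{3}$ is then well defined), the non-degeneracy hypothesis $2(\omega+2\eta)<\lambda+\omega+\eta$ holds, and $\theta_{3}=(\omega+2\eta)/(\lambda+\omega+\eta)=\Gamma_{2}(W)/\Gamma_{1}(W)=o(1)$, so that $\big((1-2\theta_{3})\mu\big)^{-1}=O(1)$. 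Also $S(W)\le4$, exactly as in the preceding subsection: conditioning on $X_{A_{ijk}}$ writes $W$ as a constant plus $W_{ijk}$ with $W_{ijk}\perp X_{A_{ijk}}$, so $\mathcal{S}_{2}(W\mid X_{A_{ijk}})=\mathcal{S}_{2}(W_{ijk})$, and for any integer-valued $U$ the second difference of its law is that law convolved with $(1,-2,1)$, whence $\mathcal{S}_{2}(U)\le 4$ by Young's inequality.

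\emph{Step 2 (the main estimate $\gamma\le Cn^{-k/(k-1)}$).} This carries essentially all the work. Recall $\gamma=\sum_{i\in J}\gamma_{i}$, each term of $\gamma_{i}$ being a product of expectations of sub-products of $X_{i},X_{j},X_{k},X_{l}$ with $j\in A_{i}\setminus\{i\}$, $k\in A_{ij}$, $l\in A_{ijk}$, and $|A_{i}|,|A_{ij}|,|A_{ijk}|=O(n^{k-1})$. The key is that $\mathbb{E}$ of a product of the $X$'s factorizes over the connected components of the intersection graph of the underlying $k$-subsets, a component on $s$ distinct sets contributing $d^{1-r}$ where $r\ge k+s-1$ is the size of their union. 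I would partition the triple sum according to the intersection pattern of $(i,j,k,l)$, weigh in each case the number of admissible tuples (a power of $n$, counted coordinate by coordinate) against the associated moment (a power of $d$), substitute $d\asymp n^{k/(k-1)}$, and check that, after summing the $O(n^{k})$ choices of $i$, every pattern contributes $O(n^{-k/(k-1)})$. The rate is genuinely attained: the extremal contribution comes from configurations in which three of the four sets are mutually maximally overlapping — already the term $\sum_{i\in J}\mathbb{E}X_{i}\sum_{j\in A_{i}}\sum_{k\in A_{ij}\setminus A_{i}}\sum_{l\in A_{ijk}}\mathbb{E}[X_{j}X_{k}X_{l}]$ occurring in $\gamma$ is of order $n^{-k/(k-1)}$. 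This case analysis is elementary but long, and I would record it as a separate claim with the computation placed in an appendix.

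\emph{Step 3 (conclusion).} Feeding Steps 1--2 into Theorem~\ref{main}$(iii)$ yields $d_{TV}(W,M_{3})\le C\gamma S(W)/\big((1-2\theta_{3})\mu\big)\le C\gamma\le Cn^{-k/(k-1)}$, which is \eqref{l371}. For \eqref{l372} I would not pass through $Y^{d}$, since Corollary~\ref{C21} would only give the weaker rate $n^{-k/(2(k-1))}$; instead I would reuse the identity established inside the proof of Theorem~\ref{main}$(iii)$, namely $\mathbb{E}\mathcal{A}_{M_{3}}g(W)=O\big(\gamma\,\|\Delta g\|\,S(W)\big)$ for \emph{every} bounded $g$. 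Applying it with $g=g_{a}$, the solution of the $M_{3}$-Stein equation for $f=\mathbf{1}_{a}$, and using $\|\Delta g_{a}\|\le2\|g_{a}\|=O(1)$ from Lemma~\ref{s}$(iii)$ (the denominator $\lambda+\omega+\eta-2|\omega+2\eta|\to\lambda+\omega+\eta\asymp1$), one gets $d_{loc}(W,M_{3})=\sup_{a}|\mathbb{E}\mathcal{A}_{M_{3}}g_{a}(W)|\le C\gamma\,S(W)\le Cn^{-k/(k-1)}$, which is \eqref{l372}. The one substantial obstacle here is Step 2: bounding $\gamma$ forces an exhaustive case analysis over the intersection patterns of the four $k$-subsets, balancing in each case the number of admissible index tuples against the size of the corresponding moment, and in particular verifying that the extremal pattern produces exactly the rate $n^{-k/(k-1)}$ and nothing coarser.
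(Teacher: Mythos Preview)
Your proposal is correct and follows the same route as the paper: invoke Theorem~\ref{main}(iii) after verifying $S(W)\le4$ and $\theta_{3}\to0$, with the real work being the estimate $\gamma=O(n^{-k/(k-1)})$ via an intersection-pattern case analysis, and handle \eqref{l372} by reusing the Stein identity with $g=g_{a}$ (this is exactly what the paper's ``similar'' means). One small caution on Step~2: your heuristic lower bound $r\ge k+s-1$ for the union of $s$ pairwise-connected distinct $k$-sets is false in general (e.g.\ $\{1,2,3\},\{1,2,4\},\{1,3,4\}$ have union of size $4<5$), so the case analysis cannot be short-circuited that way --- the paper instead identifies the dominant patterns directly (pairs with $|i\cap j|=k-1$, triples with $j\subset i\cup m$, etc.) and bounds the subdominant terms separately, which is what your full case analysis should do as well.
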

\begin{proof}
    We only prove (\ref{l371}) since the proof of (\ref{l372}) is similar. We use $M_3$ to approximate $W$ since $\Gamma_2(W)\approx 0$  under the assumption $n^{k} \asymp d^{k-1}$. To apply Theorem \ref{main}(iii), we  need to control  $\gamma$, $S(W)$, and $\theta_3$.

Trivially,  $S(W)\leq 4$. Also, by Lemma \ref{L3.5}, noting that $\widetilde{\Gamma}_2/\widetilde{\Gamma}_1\rightarrow 0$ as $n, d\rightarrow\infty$, we know
$$
\theta_{3}=\frac{\Gamma_{2}(W)}{\Gamma_{1}(W)}\rightarrow 0.
$$
It remains to control $\gamma$. Note
\begin{equation}\label{cit}
    \sum_{j \in A_{i}\setminus \{i\}}{\rm E}X_{i}X_{j}=\sum_{j=1}^{k-1}\binom kj\binom{n-k}{ k-j}  d^{1+j-2 k}.
\end{equation}
One easily finds that for each $j\in A_{i}\setminus \{i\}$,
$
{\rm E}X_{i}{\rm E}X_{j} \leq {\rm E}X_{i}X_{j}.
$
According to  the argument in \cite{arratia1989two}, with $d/n$ sufficiently large, the dominant contribution to (\ref{cit}) comes
from the pairs $(i, j)$ with $\sharp(i\cap j)= k-1$,
which in turn implies
\[
\sum_{j\in A_{i}\setminus i}{\rm E}X_{i}X_{j}\leq Cnd^{-k}\leq Cd^{-k+1}.
\]
Observe
$$\{j,m: j \in A_{i}\setminus \{i\},m \in A_{ij}\setminus A_{i}\}=\{j,m:\sharp(i\cap j)>0,\sharp(i\cap m)=0, \sharp(j\cap m)>0\},$$ and thus
\begin{align}\label{3.111}
  \sum_{j \in A_{i}\setminus \{i\}}\sum_{m \in A_{ij}\setminus A_{i}}  {\rm E}X_{i}X_{j}X_{m} = \sum_{\{m:\sharp(i\cap m)=0\}}\sum_{\{j:\sharp(i\cap j)>0,\sharp(m\cap j)>0\}}{\rm E}X_{i}X_{j}X_{m}.
\end{align}
Fix $i$ and $m$ with $\sharp (i\cap m) =0$, and set $$F_{t}^{i,m} =\{j:\sharp(i\cap j)+\sharp(j\cap m)=k-t,\sharp(i\cap j)>0,\sharp(j\cap m)>0\}$$ with $t\geq 0$. The dominant contribution from $j$ to (\ref{3.111}) comes
from $F_{0}^{i,m}$. Indeed, by simple calculations,
${\rm E}X_{i}X_{j}X_{m}=d^{-2k-t+1}$ for $j\in F_{t}^{i,m}$, and $$\sharp F_{t}^{i,m}=\binom{n-2k}{t} \binom{2k}{k-t}, $$  which in turn implies
\begin{align*}
    \sum_{j \in A_{i}\setminus \{i\}}\sum_{m \in A_{ij}\setminus A_{i}}  {\rm E}X_{i}X_{j}X_{m} &=  \sum_{\{m:\sharp(i\cap m)=0\}}\sum_{t=0}^{k-2}\sum_{\{j:j\in F_{t}^{i,m}\}}{\rm E}X_{i}X_{j}X_{m}\\
    &\leq  Cn^{k}\sum_{t=0}^{k-2} n^{t}d^{-2k-t+1} \leq Cn^{k}d^{-2k+1}.
\end{align*}
Thus, (\ref{3.111}) is asymptotically  as large as $\sum_{\{m:\sharp(i\cap m)=0\}}\sum_{\{j:j\in F_{0}^{i,m}\}}{\rm E}X_{i}X_{j}X_{m}$.

For fixed $i$ and $m$ with $\sharp (i\cap m) =0$, set
 $$F_{t,s}^{i,m} =\{j,l:\sharp(i\cap j)+\sharp(j\cap m)=k-t,\,\sharp(i\cap j)>0,\,\sharp(j\cap m)>0,\,\sharp (l\cap (i\cup j \cup m))=k-s\}$$ with $t,s\geq 0$. By a similar argument, we claim  without proof that $F_{0,0}^{i,m}$ is the dominant contribution to
\begin{align}\label{3.12}
    \sum_{j \in A_{i}\backslash\{i\}}\sum_{m \in A_{ij}\backslash A_{i}}\sum_{l \in A_{ijm}\backslash \{m\}}  {\rm E}X_{i}X_{j}X_{m}X_{l}= \sum_{\{m:\,\sharp (i\cap m)=0)\}}\sum_{t=1}^{k-2}\sum_{s=1}^{k-1}\sum_{\{j,l:j,l\in F_{t,s}^{i,m}\}}  {\rm E}X_{i}X_{j}X_{m}X_{l}.
\end{align}
 Also,   ${\rm E}X_{i}X_{j}X_{m}X_{l}=d^{-2k+1} $ for $j\in F_{0,0}^{i,m}$, $\sharp F_{0,0}^{i,m}=\binom{2k}{ k}^{2}$, and  (\ref{3.12}) is,  up to a constant factor, as large as $n^{k}d^{-2k+1}$.

Finally, note
\begin{align*}\label{DE1}
    & \gamma^{1}=\sum_{i\in J}\sum_{j \in A_{i}\backslash\{i\}}  \sum_{m \in A_{ij}}\sum_{l \in A_{ijm}}  \sum_{({\rm E})}{\rm E}X_{i}({\rm E})X_{j}{\rm E}X_{m}({\rm E})X_{l},\notag\\
      & \gamma^{2}=\sum_{i\in J}\sum_{j \in A_{i}\backslash\{i\}}  \sum_{m \in A_{ij}\backslash A_{i}}\sum_{l \in A_{ijm}}  \sum_{({\rm E})}{\rm E}X_{i}({\rm E})X_{j}X_{m}({\rm E})X_{l}.\notag
\end{align*}
  Then,  it follows that
\begin{align*}
    \gamma^{1}&= \sum_{i\in J}\sum_{j \in A_{i}\setminus\{i\}}  \sum_{m \in A_{ij}}\sum_{l \in A_{ijm}}  \sum_{({\rm E})}{\rm E}X_{i}({\rm E})X_{j}{\rm E}X_{m}({\rm E})X_{l}\\
    &\leq \sum_{i\in J}\sum_{j \in A_{i}\setminus\{i\}}  \sum_{m \in A_{ij}}\sum_{l \in A_{ijm}\setminus{m}}  {\rm E}X_{i}X_{j}{\rm E}X_{m}X_{l}+\sum_{i\in J}\sum_{j \in A_{i}\setminus\{i\}}  \sum_{m \in A_{ij}}{\rm E}X_{i}X_{j}{\rm E}X_{m}\\
   &\leq   C \bigg(\frac{n^{2k+1}}{d^{2k}}\vee \frac{n^{2k}}{d^{2k-1}} \bigg )=Cn^{-\frac{k}{k-1}},\\
   \gamma^{2}&= \sum_{i\in J}\sum_{j \in A_{i}\setminus\{i\}}  \sum_{m \in A_{ij}\setminus A_{i}}\sum_{l \in A_{ijm}}  \sum_{({\rm E})}{\rm E}X_{i}({\rm E})X_{j}X_{m}({\rm E})X_{l}\\
    &\leq \sum_{i\in J}\sum_{j \in A_{i}\setminus\{i\}}  \sum_{m \in A_{ij}\setminus A_{i}}\sum_{l \in A_{ijm}\setminus m}  {\rm E}X_{i}X_{j}X_{m}X_{l}+\sum_{i\in J}\sum_{j \in A_{i}\setminus\{i\}}  \sum_{m \in A_{ij}\setminus A_{i}}  {\rm E}X_{i}X_{j}X_{m}\\
   &\leq   C \bigg(\frac{n^{2k}}{d^{2k-1}} \bigg)=Cn^{-\frac{k}{k-1}}.
\end{align*}
Hence, $\gamma=\gamma_1+\gamma_2\le Cn^{-\frac{k}{k-1}}$. We can now conclude the proof of (\ref{l371}).
\end{proof}
\begin{remark}$d_{\rm loc}$ is usually believed to be smaller than $d_\mathrm{TV}$. But as the reader might notice, the upper bounds in (\ref{l371}) and (\ref{l372}) are of the same order. This is mainly because ${\rm E}W$ and $\operatorname{Var}W$ are asymptotically constant such that   $|\Delta g_{A}|$ and $|\Delta g_{a}|$ are both $O(1)$, which in turn leads to the same upper bounds by   Stein's method.
\end{remark}
\begin{remark}
   Theorem \ref{T331} is a significant improvement compared with \cite{arratia1989two}, in which Arratia obtained
    $$d_\mathrm{TV} (W,\mathcal{P}(\Lambda) )\leq Cn^{-\frac{1}{k-1}},$$
    where $\Lambda=\binom n k d^{1-k},$
    and \cite{vatutin1982limit}, in which Vatutin and Mikhailov obtained
    $$d_{\rm loc} (W,Y^{d} )\leq C\sigma^{-1}=Cn^{-\frac{1}{k-1}}$$
    under the assumption $n^{k}\asymp d^{k-1}$.
\end{remark}

As the reader might realize, it is hard to obtain an explicit formula for the three parameters $\lambda$, $\omega$, and $\eta$ by solving the equations (\ref{112}) since $\Gamma_1(W)$, $\Gamma_2(W)$, and $\Gamma_3(W)$ are so complex. For the sake of practical computation, we   provide an alternative approximation using a new mixture of  Poisson variables. Define
\begin{align*}
    &\lambda^{\prime}=\frac{n^{k}}{k!d^{k}}+\sum_{j=2}^{k-1}\left(\frac{3}{2}a_j-1\right)\binom{k}{j}\frac{n^{2k-j}}{k!(k-j)!d^{2k-j-1}},\notag\\
    &\omega^{\prime}=\sum_{j=2}^{k-1}\left(1-3a_j\right)\binom{k}{j}\frac{n^{2k-j}}{k!(k-j)!d^{2k-j-1}},\notag\\
   &\eta^{\prime}=\frac{3}{2}\sum_{j=2}^{k-1}\binom{k}{j}\frac{a_jn^{2k-j}}{k!(k-j)!d^{2k-j-1}},
\end{align*}
and   $M_3'={\cal P}(\lambda') \ast 2{\cal P}(\omega'/2)\ast 3{\cal P}(\eta'/3)$.

\begin{lemma}\label{LL1}
    Assume $\theta_{3}<1/2$. We have
    \begin{equation*}
     d_\mathrm{TV}(M_3, M_3')\leq \frac{C}{n}.
 \end{equation*}
\end{lemma}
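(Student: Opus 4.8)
The plan is to bound $d_{TV}(M_3, M_3')$ by comparing the two discrete distributions directly through their probability generating functions, or equivalently through a Stein-type perturbation argument. Since both $M_3$ and $M_3'$ are convolutions of the form $\mathcal{P}(\lambda)*2\mathcal{P}(\frac{\omega}{2})*3\mathcal{P}(\frac{\eta}{3})$, the discrepancy between them is governed entirely by the discrepancies $|\lambda - \lambda'|$, $|\omega - \omega'|$, $|\eta - \eta'|$. So the first step is to estimate these three differences. By the definition of $\{\lambda,\omega,\eta\}$ in \eqref{112} and of $\{\lambda',\omega',\eta'\}$ in \eqref{EE1}, together with Lemma \ref{L3.5} (which gives $|\Gamma_i(W) - \widetilde{\Gamma}_i| \leq C/n$ for $i=1,2,3$), one checks that $\lambda',\omega',\eta'$ are obtained from $\widetilde{\Gamma}_1,\widetilde{\Gamma}_2,\widetilde{\Gamma}_3$ by exactly the same linear formulas \eqref{K3} that produce $\lambda,\omega,\eta$ from $\Gamma_1(W),\Gamma_2(W),\Gamma_3(W)$. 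Hence
$$|\lambda - \lambda'| \leq C\big(|\Gamma_1(W)-\widetilde{\Gamma}_1| + |\Gamma_2(W)-\widetilde{\Gamma}_2| + |\Gamma_3(W)-\widetilde{\Gamma}_3|\big) \leq \frac{C}{n},$$
and similarly $|\omega - \omega'| \leq C/n$, $|\eta - \eta'| \leq C/n$.

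The second step is to convert these parameter discrepancies into a total variation bound. Here I would use the Stein operator $\mathcal{A}_{M_3}$ from \eqref{L213} together with the bounds in Lemma \ref{s}(iii). Writing $d_{TV}(M_3, M_3') = \sup_{A}|\mathbb{E}\mathbf{1}_A(M_3') - \mathbb{E}\mathbf{1}_A(M_3)| = \sup_A |\mathbb{E}\mathcal{A}_{M_3} g_A(M_3')|$, where $g_A$ solves the Stein equation for $M_3$, one expands
$$\mathbb{E}\mathcal{A}_{M_3}g_A(M_3') = (\lambda+\omega+\eta)\mathbb{E}g_A(M_3'+1) - \mathbb{E}M_3' g_A(M_3') + \omega\mathbb{E}\Delta g_A(M_3'+1) + \eta\mathbb{E}[\Delta g_A(M_3'+1)+\Delta g_A(M_3'+2)].$$
Since $\mathbb{E}\mathcal{A}_{M_3'}g_A(M_3') = 0$, subtracting $\mathcal{A}_{M_3'}$ leaves exactly the terms proportional to $(\lambda+\omega+\eta)-(\lambda'+\omega'+\eta')$, $\omega - \omega'$, $\eta - \eta'$, each multiplied by a quantity of the form $\mathbb{E}g_A(M_3'+\cdot)$ or $\mathbb{E}\Delta g_A(M_3'+\cdot)$. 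Using $\|\Delta g_A\| \leq \big(\lambda+\omega+\eta - 2(\omega+2\eta)\big)^{-1} = O(1)$ (valid under $\theta_3 < 1/2$) and the analogous bound $\|g_A\| = O(1)$, each such term is $O(1/n)$, giving the claimed bound. The condition $\theta_3 < 1/2$ is what makes the Stein factors bounded; under the birthday-problem scaling $n^k \asymp d^{k-1}$ one has $\theta_3 \to 0$ as shown in the proof of Theorem \ref{3.6}, so the hypothesis is satisfied in the application.

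The main obstacle is the bookkeeping in the second step: one must be careful that the quantities multiplying the parameter differences (expectations of $g_A$ evaluated at shifts of $M_3'$) are controlled by $\|g_A\|$ and $\|\Delta g_A\|$ rather than by something growing with $\lambda,\omega,\eta$ themselves — in particular the term $-\mathbb{E}M_3'g_A(M_3')$ must be rewritten using the size-biasing identity $\mathbb{E}M_3'g_A(M_3') = \lambda'\mathbb{E}g_A(M_3'+1) + \omega'\mathbb{E}g_A(M_3'+2) + \eta'\mathbb{E}g_A(M_3'+3)$ (from the structure of the convolution) before the subtraction, so that all surviving terms are genuinely differences of parameters times bounded Stein factors. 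An alternative route that avoids this is to bound $d_{TV}$ by a coupling: realize $M_3$ and $M_3'$ on a common space by coupling the three Poisson components, so that they differ only on an event of probability $O(|\lambda-\lambda'| + |\omega-\omega'| + |\eta-\eta'|) = O(1/n)$; this is cleaner but gives the same rate. Either way the essential input is Lemma \ref{L3.5}, and the rest is routine.
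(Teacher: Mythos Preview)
Your approach is essentially the paper's: subtract the two Stein operators and observe that only terms proportional to the parameter differences survive, then invoke Lemma~\ref{L3.5}. Two small corrections are worth making. First, your worry about the $-\mathbb{E}M_3'g_A(M_3')$ term and the proposed size-biasing rewrite are unnecessary: the term $-kg(k)$ is identical in both $\mathcal{A}_{M_3}$ and $\mathcal{A}_{M_3'}$, so it cancels exactly upon subtraction, leaving only $[(\lambda{+}\omega{+}\eta)-(\lambda'{+}\omega'{+}\eta')]\mathbb{E}g_A(\cdot{+}1)$ plus $\Delta g_A$--terms. Second, Lemma~\ref{s}(iii) bounds $\|\Delta g_A\|$ and $\|g_a\|$ (point masses) but not $\|g_A\|$ for general $A$; to control the factor $\sup_A|\mathbb{E}g_A(M_3{+}1)|$ the paper additionally invokes Theorem~2.5 of Barbour--Xia \cite{barbour1999poisson}, which supplies a bound of order $(\lambda+\omega+\eta)^{-1/2}$. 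In the birthday-problem regime $\lambda+\omega+\eta\asymp 1$, so this is indeed $O(1)$ as you assert, but the justification requires that extra citation. (The paper also swaps the roles, applying $\mathcal{A}_{M_3'}$ to $M_3$ rather than $\mathcal{A}_{M_3}$ to $M_3'$; this is symmetric and immaterial.)
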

\begin{proof}
We follow the proof of  \cite[Corollary 2.4]{barbour1999poisson}, which deals with two-parameter cases.  Recall
\[
\mathcal{A}_{M_{3}}g(k)=(\lambda+\omega+\eta) g(k+1)-k g(k)+\omega\Delta g(k+1)+\eta[\Delta g(k+1)+\Delta g(k+2)]
\]
and
\[
\mathcal{A}_{M'_{3}}g(k)=(\lambda'+\omega'+\eta') g(k+1)-k g(k)+\omega'\Delta g(k+1)+\eta'[\Delta g(k+1)+\Delta g(k+2)].
\]
Thus, it follows that
\[
{\rm E}\mathcal{A}_{M'_{3}}g(M_3)=(\lambda'+\omega'+\eta') {\rm E}g(M_3+1)-{\rm E}M_3 g(M_3)+\omega' {\rm E}\Delta g(M_3+1)+\eta'[{\rm E}\Delta g(M_3+1)+{\rm E}\Delta g(M_3+2)]
\]
and
\[
 {\rm E}\mathcal{A}_{M_{3}}g(M_3)= (\lambda +\omega +\eta )  {\rm E}g(M_3+1)- {\rm E}M_3 g(M_3)+\omega  {\rm E}\Delta g(M_3+1)+\eta [ {\rm E}\Delta g(M_3+1)+ {\rm E}\Delta g(M_3+2)].
\]
Thus,   by the fact that ${\rm E}{\cal A}_{M_3}(M_3)=0$, we obtain
\begin{align}\label{eqq1}
   d_\mathrm{TV}(M_3, M_3')&= \sup_{A\subseteq \mathbb{Z}^+}|{\rm E}\mathcal{A}_{M'_{3}}g_A(M_3)|\notag\\&= \sup_{A\subseteq \mathbb{Z}^+}|{\rm E}\mathcal{A}_{M'_{3}}g_A(M_3)-{\rm E}\mathcal{A}_{M_{3}}g_A(M_3)| \nonumber\\
      &\leq  (|\lambda-\lambda'|+|\omega-\omega'|+|\eta-\eta'|)\sup_{A\subseteq \mathbb{Z}^+}|{\rm E}{g_{A}(M_3)}|\notag\\
   &\quad\, +(|\omega-\omega'|+2|\eta-\eta'|)\sup_{A\subseteq \mathbb{Z}^+}|{\rm E}{\Delta g_{A}(M_3)}|.
\end{align}
Hence, by \cite[Theorem 2.5]{barbour1999poisson} and Lemma \ref{s},
\[
 d_\mathrm{TV}(M_3, M_3') \le \frac{3(|\lambda-\lambda'|+|\omega-\omega'|+|\eta-\eta'|)}{(1-2\theta_{3})(\lambda+\omega+\eta)^{1/2}}.
\]

In addition, since
\begin{equation}\label{0i}
\lambda +\omega +\eta =\Tilde{\Gamma}_{1}(W),\quad \omega +2\eta =\Tilde{\Gamma}_{2}(W),\quad 2\eta =\Tilde{\Gamma}_{3}(W),
\end{equation}
and
\begin{equation*}\label{112222}
\lambda'+\omega'+\eta'=\Tilde{\Gamma}_{1},\quad \omega'+2\eta'=\Tilde{\Gamma}_{2},\quad 2\eta'=\Tilde{\Gamma}_{3},
\end{equation*}
then by Lemma  \ref{L3.5}, we have
 \begin{equation}\label{io}
    |\lambda-\lambda'|+|\omega-\omega'|+|\eta-\eta'|\leq \frac{C}{n}.
\end{equation}
Substituting (\ref{0i}) and (\ref{io}) into (\ref{eqq1}), we complete the proof.
\end{proof}

\begin{theorem}\label{TT3}
    Suppose that  both $n$ and $d$ tend  to $\infty$ such that $n^{k} \asymp d^{k-1}$. We have
    \begin{align}&\label{T331}
         d_\mathrm{TV}(W, M_3')\leq Cn^{-1},\\
    &\label{T332}
         d_{\rm loc}(W, M_3')\leq Cn^{-1}.
    \end{align}
\end{theorem}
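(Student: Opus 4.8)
The plan is to derive \eqref{T331} and \eqref{T332} simply by chaining the two estimates already in hand. Theorem \ref{3.6} gives $d_{TV}(W,M_3)\le Cn^{-k/(k-1)}$ and $d_{loc}(W,M_3)\le Cn^{-k/(k-1)}$, while Lemma \ref{LL1} gives $d_{TV}(M_3,M_3')\le C/n$. Both pieces of genuine work — the factorial-cumulant asymptotics of Lemma \ref{L3.5}, the estimate of $\gamma$ used in the proof of Theorem \ref{3.6}, and the Stein-operator perturbation argument behind Lemma \ref{LL1} — have already been carried out, so what remains is essentially a triangle inequality together with an exponent comparison.

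Concretely, first I would record that the hypothesis $\theta_3<1/2$ required by Lemma \ref{LL1} is satisfied: in the proof of Theorem \ref{3.6} it was shown that $\theta_3=\Gamma_2(W)/\Gamma_1(W)\to 0$ under $n^k\asymp d^{k-1}$, hence $\theta_3<1/2$ for all sufficiently large $n$ and $d$. Then the triangle inequality yields
$$d_{TV}(W,M_3')\le d_{TV}(W,M_3)+d_{TV}(M_3,M_3')\le Cn^{-k/(k-1)}+Cn^{-1}.$$
Since $k\ge 2$ forces $k/(k-1)>1$, the first term is absorbed into the second for large $n$, which gives \eqref{T331}. For \eqref{T332} I would argue in the same way, using the trivial bound $d_{loc}\le d_{TV}$ to get $d_{loc}(M_3,M_3')\le d_{TV}(M_3,M_3')\le C/n$ from Lemma \ref{LL1}, and then
$$d_{loc}(W,M_3')\le d_{loc}(W,M_3)+d_{loc}(M_3,M_3')\le Cn^{-k/(k-1)}+Cn^{-1}\le Cn^{-1}.$$

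The nearest thing to an obstacle is purely bookkeeping: one must confirm that $n^{-1}$ is indeed the dominant rate and that the hypotheses of Lemma \ref{LL1} are in force throughout the regime $n^k\asymp d^{k-1}$. There is no new analytic difficulty here, since the sharpness of the rate is controlled entirely by Lemma \ref{L3.5} and the $C/n$ perturbation bound of Lemma \ref{LL1}; in particular, if one wanted the implied constant explicitly it could be read off from those two results.
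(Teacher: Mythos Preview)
Your proposal is correct and matches the paper's own proof essentially line for line: the paper also applies the triangle inequality to combine Theorem \ref{3.6} with Lemma \ref{LL1}, obtaining $d_{TV}(W,M_3')\le Cn^{-k/(k-1)}+Cn^{-1}\le Cn^{-1}$, and then remarks that the proof of \eqref{T332} is similar. Your added observations (checking $\theta_3<1/2$ and invoking $d_{loc}\le d_{TV}$ for the second term) are just the natural details behind what the paper leaves implicit.
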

\begin{proof}
Using Lemmas \ref{3.6} and \ref{LL1} and the triangle inequality, we get
\begin{align*}
   d_\mathrm{TV}(W, M_3')\leq  d_\mathrm{TV}(W, M_3) + d_\mathrm{TV}(M_3', M_3)
   \leq  Cn^{-\frac{k}{k-1}}+Cn^{-1}\leq \frac{C}{n}.
\end{align*}
We have proven that (\ref{T331}) holds. The proof of (\ref{T332}) is similar and  thus is omitted.
\end{proof}

\subsection{Counting monochromatic edges in uniformly colored graphs}
 Let $G = \{\mathcal{V} , \mathcal{E}\}$ be a simple  undirected graph, where $\mathcal{V}= \{v_{1}, \ldots, v_{n} \}$ is the vertex set and $\mathcal{E}= \{e_{1},e_{2},\ldots,e_{m_{n}}\}$  is the edge set. For $1 \leq i \leq n$, let $D_i$ denote the neighborhood of vertex $v_i$, i.e.,  $D_{i}= \{1 \leq j \leq n, j \neq i, (v_{i}, v_{j} ) \in \mathcal{E} \}$.  Set $d_{i}=\,\sharp D_{i}$, and note that  $2 m_{n}=\sum_{i=1}^{n} d_{i}$. Each vertex is colored independently and uniformly with $c_{n} \geq 2$ colors, and we are interested in the number of monochromatic edges in $\mathcal{E}$.

Define $J=\{1,2,\ldots,m_{n}\}$. For each edge $e_i\in \mathcal{E}$, we denote by $e_{i1}$ and $e_{i2}$ the two vertices it connects, i.e., $e_{i}= (e_{i1}, e_{i2})$.  Define \begin{equation*}
    X_{i}=\left\{\!\begin{array}{*{2}ll}
        1 &   \quad\text{if\ } e_{i1} \text{ and } e_{i2} \ \text{are colored the same}, \\
         0 &  \quad\text{otherwise}.
    \end{array}\right.
\end{equation*}
It can be easily verified that $\{X_i, i\in J\}$  satisfies the local dependence structure (LD1)--(LD3) with $A_{i}$, $A_{ij}$, and  $A_{ijk}$, where $A_{i}$ consists of all edges connected to $e_{i}$, $A_{ij}$  consists of all edges connected to $e_{i}$ and $e_{j}$  where $j \in A_{i}$, and $A_{ijk}$ consists of all edges connected to $e_{i}$, $e_{j}$, and $e_{k}$ where $j \in A_{i}$ and $k\in A_{j}$.

The number  of monochromatic edges in $G = \{\mathcal{V} , \mathcal{E}\}$ is defined as
  $$
  W=\sum_{j\in J} X_{j}.
  $$

In Figure \ref{FI13},  $n=7$, $c_{7}=3$, and $W =2$, where vertices $a$ and $f$ are both blue, while vertices $b$ and $e$ are  green.

\begin{figure}[h!]
\centering
    \begin{tikzpicture}
\fill [blue] (0,0) circle(1 pt)node[right]{\footnotesize$a$};
\fill [green] (0.5,-1) circle(1 pt)node[above,right]{\footnotesize$b$};
\fill [green] (-1,-1.2) circle(1 pt)node[left]{\footnotesize$e$};
\fill [blue] (0.2,-1.5) circle(1 pt)node[below,left]{\footnotesize$f$};
\fill [red] (0.2,-3) circle(1 pt)node[below]{\footnotesize$c$};
\fill [blue] (-2,-3.8) circle(1 pt)node[below,left]{\footnotesize$d$};
\fill [blue] (2,-0.5) circle(1 pt)node[above,right]{\footnotesize$g$};
\draw[line width=0.2 pt](0,0)--(0.5,-1) ;
\draw[line width=0.2pt](0.5,-1)--(0.2,-3) ;
\draw[line width=0.2pt](0.2,-3)--(-2,-3.8) ;
\draw[line width=0.2pt](-2,-3.8)--(-1,-1.2) ;
\draw[line width=0.2pt](-1,-1.2)--(0,0) ;
\draw[line width=0.2pt](0,0)--(0.2,-1.5) ;
\draw[line width=0.2pt](0.2,-1.5)--(0.2,-3) ;
\draw[line width=0.2pt](0.2,-1.5)--(-1,-1.2) ;
\draw[line width=0.2pt](-1,-1.2)--(0.5,-1) ;
\draw[line width=0.2pt](0.5,-1)--(0.2,-1.5) ;
\draw[line width=0.2pt](0.5,-1)--(2,-0.5) ;
\end{tikzpicture}
 \caption{(Color online) Uniformly colored graph with $n=7$, $c_{7}=3$, and $W=2$}
    \label{FI13}
\end{figure}

It is immediate that
\begin{align*}
&{\rm E}W=\frac{m_{n}}{c_{n}},\quad {\rm E}W^{2}=\frac{m_{n}(m_{n}-1)}{c_{n}^{2}}+\frac{m_{n}}{c_{n}},\\
&{\rm E}W^{3}=\frac{m_{n}(m_{n}-1)(m_{n}-2)}{c_{n}^{3}}
+3\frac{m_{n}(m_{n}-1)}{c_{n}^{2}}+\frac{m_{n}}{c_{n}}.\end{align*}
  Thus,  we have
\begin{equation*}\label{772}
   \Gamma_{1}(W)=\frac{m_{n}}{c_{n}},\quad\Gamma_{2}(W)=-\frac{m_{n}}{c_{n}^{2}},\quad \Gamma_{3}(W)= \frac{4m_{n}}{c_{n}^{3}}.
\end{equation*}
\begin{theorem}\label{T2.3} Define $d_{(n)}=\max_{1\le i\le n}d_{i}$, and assume $d_{(n)}^{2}\ll c_{n} \ll m_{n}$. Then
    \begin{align}\label{T361}
      & d_\mathrm{TV}(W,Y^{d}) \leq C \bigg(\sqrt{\frac{c_{n}}{m_{n}}}+\frac{d_{(n)}^{4}}{c_{n}^{3}} \bigg ),\\\label{T362}
      & d_{\rm loc}(W,Y^{d}) \leq C \bigg(\sqrt{\frac{c_{n}}{m_{n}}}+\frac{d_{(n)}^{4}}{c_{n}^{3}} \bigg)^{1/2} \sqrt{\frac{c_{n}}{m_{n}}+\frac{d_{(n)}}{\sqrt{c_{n}}}}.
 \end{align}
\end{theorem}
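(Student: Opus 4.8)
The plan is to apply part $(iii)$ of Theorem~\ref{main} (since $\Gamma_2(W)<0$ is of smaller order than $\Gamma_1(W)$, so $\mu\approx\sigma^2$ holds under $c_n\ll m_n$) together with Corollary~\ref{C21}$(iii)$, so the whole task reduces to estimating the three quantities $\gamma$, $S(W)$ and $\theta_3$, and then assembling them. First I would record the parameters: from $\Gamma_1(W)=m_n/c_n$, $\Gamma_2(W)=-m_n/c_n^2$, $\Gamma_3(W)=4m_n/c_n^3$ and \eqref{K3} we get $\lambda=\Gamma_1-\Gamma_2+\tfrac12\Gamma_3$, $\omega=\Gamma_2-\Gamma_3<0$, $\eta=\tfrac12\Gamma_3>0$; hence $\lambda+\omega+\eta=\Gamma_1(W)=m_n/c_n$ and $\omega+2\eta=\Gamma_2(W)=-m_n/c_n^2$, so
$$\theta_3=\frac{\omega+2\eta}{\lambda+\omega+\eta}=\frac{\Gamma_2(W)}{\Gamma_1(W)}=-\frac1{c_n}\to 0,$$
which in particular verifies the hypothesis $2(\omega+2\eta)<\lambda+\omega+\eta$ of Theorem~\ref{main}$(iii)$ for $n$ large, and makes the factor $(1-2\theta_3)^{-1}$ a harmless constant. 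Also $\sigma^2=\operatorname{Var}W=m_n/c_n-m_n/c_n^2\asymp m_n/c_n$, giving the $C/\sigma=C\sqrt{c_n/m_n}$ term.

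Next I would bound $S(W)$. Since each $X_i$ is an indicator, $W$ conditioned on $X_{A_{ijk}}$ is a sum of indicators plus a constant; the second-difference sum $\mathcal S_2(W\mid X_{A_{ijk}})$ is controlled by a crude constant bound — as in the examples already treated (counting inward edges, birthday problem) where $S(W)\le 4$ — so I expect $S(W)=O(1)$, possibly after a short argument using the structure of the conditional law. The main work is the estimate of $\gamma=\gamma^1+\gamma^2$. Here I would use $\mathbb EX_i=1/c_n$, $\mathbb EX_iX_j=1/c_n^2$ for adjacent edges $j\in A_i$ (two colors must agree along a path of length $2$), $\mathbb EX_iX_jX_k\le 1/c_n^2$ or $1/c_n^3$ depending on whether $i,j,k$ span $3$ or $4$ distinct vertices, and similarly for four-fold products. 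The cardinalities enter through $\sharp A_i\le d_{(n)}$, $\sharp A_{ij}\le 2d_{(n)}$ (or more precisely $O(d_{(n)})$ more edges), $\sharp A_{ijk}=O(d_{(n)})$, and $\sharp J=m_n$. Summing the eight types of terms in $\gamma^1,\gamma^2$ (each a product of one $\mathbb EX_i$ and various $\mathbb E$'s or joint moments over $j\in A_i\setminus i$, $k\in A_{ij}$, $l\in A_{ijk}$), the dominant contributions are $m_n\cdot d_{(n)}^3\cdot c_n^{-4}$, giving
$$\frac{\gamma\,S(W)}{(1-2\theta_3)\,\mu}\le C\,\frac{m_n d_{(n)}^3 c_n^{-4}}{m_n/c_n}=C\,\frac{d_{(n)}^3}{c_n^3}\le C\,\frac{d_{(n)}^4}{c_n^3},$$
where the last step is wasteful but harmless and matches the stated bound (one could also keep $d_{(n)}^3/c_n^3$). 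Combining with $d_{TV}(W,Y^d)\le d_{TV}(W,M_3)+C/\sigma$ from Theorem~\ref{main} yields \eqref{T361}. For \eqref{T362} I would feed the same bound into Corollary~\ref{C21}$(iii)$, taking the square root of the $d_{TV}(W,M_3)$ estimate and multiplying by $[\mathcal S_2(W)+\mathcal S_2(Y^d)]^{1/2}$; here $\mathcal S_2(Y^d)=O(1/\sigma)=O(\sqrt{c_n/m_n})$ from properties of the discretized normal, and $\mathcal S_2(W)$ should be of order $\sqrt{c_n/m_n}+d_{(n)}/\sqrt{c_n}$ (a separate smoothing estimate, plausibly via $\mathcal S_2(W)\le\mathcal S_2(W)\le C\big(d_{loc}\text{-type bound}\big)$ or by comparing $W$ to $M_3$ and using $\mathcal S_2(M_3)$), producing the second factor in \eqref{T362}.

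The main obstacle I anticipate is the bookkeeping in the $\gamma$-estimate: there are many index-set constraints ($j\in A_i\setminus i$ vs.\ $k\in A_{ij}$ vs.\ $k\in A_{ij}\setminus A_i$, and which positions carry an $\mathbb E$), and for each configuration one must correctly count how many vertices the edges $i,j,k,l$ jointly touch in order to get the right power of $c_n$, while simultaneously bounding the number of such configurations by the appropriate power of $d_{(n)}$ and $m_n$. A secondary but genuine technical point is justifying $S(W)=O(1)$ and the claimed size of $\mathcal S_2(W)$, which are the analogues of the "delicate analysis and computations'' mentioned in the abstract; these are exactly the kind of elementary-but-lengthy computations the paper defers to an appendix, so I would state the key bound as a claim and relegate the verification there.
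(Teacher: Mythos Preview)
Your proposal has two genuine gaps.

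\textbf{First, the choice of intermediate law.} You select $M_3=\mathcal P(\lambda)\ast 2\mathcal P(\omega/2)\ast 3\mathcal P(\eta/3)$ via case~(iii). But with $\Gamma_2(W)=-m_n/c_n^{2}<0$ and $\Gamma_3(W)=4m_n/c_n^{3}>0$, the parameter $\omega=\Gamma_2(W)-\Gamma_3(W)$ is \emph{negative}, so $2\mathcal P(\omega/2)$ is not a probability distribution and $M_3$ is not well-defined. The sign of $\Gamma_2$ matters here, not merely its smallness relative to $\Gamma_1$. The paper instead uses case~(i) with $M_1=B(m_n,\,c_n^{-1})$ (i.e.\ $n=m_n$, $p=c_n^{-1}$, $\lambda=0$, hence $\theta_1=0$).

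\textbf{Second, and more seriously, the crude bound $S(W)\le 4$ is too weak to produce \eqref{T361}.} Your estimate $\gamma\asymp m_n d_{(n)}^{3}c_n^{-4}$ overlooks the diagonal contributions (terms with $l=k$, which carry $\mathbb E X_k^{2}=c_n^{-1}$ rather than $c_n^{-2}$); the correct size is $\gamma\le C\,m_n d_{(n)}^{2}c_n^{-3}$, as the paper shows. With that and $S(W)=O(1)$ you would obtain
\[
\frac{\gamma\,S(W)}{\mu}\;\le\;C\,\frac{m_n d_{(n)}^{2}/c_n^{3}}{m_n/c_n}\;=\;C\,\frac{d_{(n)}^{2}}{c_n^{2}},
\]
and under $d_{(n)}^{2}\ll c_n$ one has $d_{(n)}^{2}/c_n^{2}\gg d_{(n)}^{4}/c_n^{3}$, so this does \emph{not} imply the stated bound. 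The paper's mechanism is a bootstrap: it first proves the comparison
\[
S(W)\le \mathcal S_2(W)+C\,\frac{d_{(n)}^{2}}{c_n},\qquad
|\mathcal S_2(W)-\mathcal S_2(M_1)|\le 4\,d_{TV}(W,M_1),\qquad
\mathcal S_2(M_1)\le C\,\frac{c_n}{m_n},
\]
(the first inequality is the ``claim'' \eqref{Claim1} proved in Appendix~A; the last uses a standard smoothing bound for the binomial). Substituting into Theorem~\ref{main}(i) yields a self-referential inequality
\[
d_{TV}(W,M_1)\;\le\;C\,\frac{d_{(n)}^{2}}{c_n^{2}}\Big(\frac{c_n}{m_n}+\frac{d_{(n)}^{2}}{c_n}+4\,d_{TV}(W,M_1)\Big)+C\,\frac{c_n}{m_n},
\]
which, since $d_{(n)}^{2}/c_n^{2}\to 0$, can be solved to give $d_{TV}(W,M_1)\le C(d_{(n)}^{4}/c_n^{3}+c_n/m_n)$. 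This feedback between $S(W)$ and $d_{TV}(W,M_1)$ is the missing idea in your plan; treating $S(W)$ as $O(1)$ and $\mathcal S_2(W)$ as a ``separate smoothing estimate'' loses exactly the factor $d_{(n)}^{2}/c_n$ that turns $d_{(n)}^{2}/c_n^{2}$ into $d_{(n)}^{4}/c_n^{3}$. The local bound \eqref{T362} then follows from Corollary~\ref{C21} once these sharper estimates of $\mathcal S_2(W)$ and $S(W)$ are in hand.
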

\begin{remark}
    Barbour et al. \cite{barbour1992poisson} used a single-parameter Poisson random variable to  approximate $W$ and obtained
 \begin{equation}\label{BP1}
     d_\mathrm{T V} \bigg(W, \mathcal{P} \bigg(\frac{m_{n}}{c_{n}} \bigg)\bigg )  \leq \frac{\sqrt{8 m_{n}}}{c_{n}}.
 \end{equation}
Obviously, (\ref{BP1}) makes sense only for $c_{n}\ll m_{n}\ll c_{n}^{2}$, and  the upper bound would increase with $m_{n}$.  In contrast, our result improves as $m_{n}$ grows larger.

We   mention some recent results
 due to \cite{fang2015universal,shao2019berry}, which provide upper bounds in terms of the Wasserstein distance and the Berry-Essen bound:
\begin{align*}
    &d_{W}(\mathcal{L}(W), N(\mu,\sigma^{2})) \leq 3 \sqrt{\frac{c_{n}}{m_{n}}}+\frac{10 \sqrt{2}}{\sqrt{c_{n}}}+\frac{1}{\sqrt{\pi}} \frac{2^{7 / 4}}{m_{n}^{1 / 4}},\\
&
    \sup _{z \in \mathbb{R}} |{\rm P} (W\leq \sigma z+\mu )-\Phi(z) | \leq C \bigg(\sqrt{\frac{1}{c_{n}}}+\sqrt{\frac{d_{(n)}}{m_{n}}}+\sqrt{\frac{c_{n}}{m_{n}}} \bigg ).
\end{align*}
By comparison, we find that the upper bounds in (\ref{T361}) and (\ref{T362}) are reasonable and correct,  and are indeed better when  $d_{(n)}$ is relatively small.
\end{remark}
\begin{proof}[Proof of Theorem $\ref{T2.3}$] We only prove (\ref{T361}) since the proof of (\ref{T362}) is similar.
Since $\Gamma_{2}\leq 0$, we prefer to use $M_{1}=B(n, p)*\mathcal{P}(\lambda)$ to approximate $W$. In our context, the three parameters become $n=m_{n}$, $p=c_{n}^{-1}$, and $\lambda=0$, i.e., $M_1=B(m_n, c_{n}^{-1})$.  To apply
 Theorem \ref{main}(i), we need to control $\gamma$, $S(W)$, and $\theta_1$ separately.

First, note that for each $ i \in J$,
\begin{equation}\label{TG1}
{\rm E}X_{i}={\rm E}X_{i}^{2}=\frac{1}{c_{n}}
\end{equation}
and
\begin{equation}\label{TG2}
 \sum_{j\in A_{i}\backslash \{i\}}{\rm E}X_{i}X_{j}\le \frac{d_{(n)}}{c_{n}^{2}}.
\end{equation}
 Then,  by (\ref{TG1}), (\ref{TG2}),  and the assumption $d_{(n)}\leq \sqrt{c_{n}}$,
\begin{align}
   \gamma^{1}&:=\sum_{i\in J}\sum_{j \in A_{i}\backslash\{i\}}  \sum_{k \in A_{ij}}\sum_{l \in A_{ijk}}  \sum_{({\rm E})}{\rm E}X_{i}({\rm E})X_{j}{\rm E}X_{k}({\rm E})X_{l}\notag\\
   &\ \leq  C \sum_{i\in J}\sum_{j\in A_{i}\backslash \{i\}}{\rm E}X_{i}X_{j}\sum_{k\in A_{ij}}{\rm E}X_{k}^{2}\notag\\
   &\ \leq  C\frac{m_{n}d_{(n)}^{2}}{c_{n}^{3}},\notag\\
   \gamma^{2}&:=\sum_{i\in J}\sum_{j \in A_{i}\backslash\{i\}}  \sum_{k \in A_{ij}\backslash A_{i}}\sum_{l \in A_{ijk}}  \sum_{({\rm E})}{\rm E}X_{i}({\rm E})X_{j}X_{k}({\rm E})X_{l}\notag\\
   & \ \leq  C \sum_{i\in J}\sum_{j\in A_{i}\backslash \{i\}}{\rm E}X_{i}X_{j}\sum_{k\in A_{ij}\backslash A_{i}}X_{k}^{2}\notag\\
   &\ \leq  C\frac{m_{n}d_{(n)}^{2}}{c_{n}^{3}}.\notag
\end{align}
 Thus,  we have
 \begin{align}\label{341}
    \gamma &=  \gamma^{1} + \gamma^{2} \leq C\frac{m_{n}d_{(n)}^{2}}{c_{n}^{3}}.
\end{align}

Second, note $\lambda=0$ and thus $\theta_{1}=0$.

It remains to calculate $S(W)$. By
\cite[Proposition 4.6]{barbour1999poisson}, we easily get
\begin{equation}\label{343}
    S_{2}(M_{1})\leq C\frac{c_{n}}{m_{n}}.
\end{equation}
Define $p_{k}={\rm P}(W=k)$ and $q_{k}={\rm P}(M_{1}=k)$, $k\in \mathbb{Z}_+$.  It follows that
\begin{align}\label{344}
    |S_{2}(W)-S_{2}(M_{1})|&\leq  \bigg|\sum_{k=0}^{+\infty}[|p_{k+2}-2p_{k+1}
    +p_{k}|-|q_{k+2}-2q_{k+1}+q_{k}|]\bigg|\notag\\
    &\leq  4\sum_{k=0}^{+\infty}|p_{k}-q_{k}| = 4d_\mathrm{TV}(W, M_{1}).
    \end{align}

In addition,  we have for each configuration $x$,
\begin{equation}\label{Claim1}
       S_{2}(W| X_{A_{ijk}}=x)-S_{2}(W)\leq C\frac{d_{(n)}^{2}}{c_{n}},
\end{equation}
whose proof is postponed to \ref{A1}.

Combining (\ref{343})--(\ref{Claim1}) directly implies
\begin{align}\label{3441}
S(W)\le  S_{2}(W)+ C\frac{d_{(n)}^{2}}{c_{n}}
\le    4d_\mathrm{TV}(W, M_{1})+ C\frac{c_{n}}{m_{n}} + C\frac{d_{(n)}^{2}}{c_{n}}.
  \end{align}
Finally,  by  Theorem \ref{main}(i),   (\ref{341}), and (\ref{3441}), noting that $\theta_1=0$ and $\mu=m_n/c_n$, we have
\begin{align} \label{3442}
    d_\mathrm{TV}(W,M_{1}) \leq  C \frac{\gamma S(W)+p^2}{(1-2\theta_1)q\mu}
      \leq  C\frac{d_{(n)}^{2}}{c_{n}^{2}}\bigg[\frac{c_{n}}{m_{n}}+\frac{d_{(n)}^{2}}{c_{n}}+4d_\mathrm{TV}(W, M_{1})\bigg].
\end{align}

In turn, since $d_{(n)}^{2}\ll c_n$, solving (\ref{3442}) yields
\begin{equation}\label{22}
    d_\mathrm{TV}(W, M_{1}) \leq C \bigg(\frac{d_{(n)}^{2}}{m_{n}c_{n}}+\frac{d_{(n)}^{4}}{c_{n}^{3}}+\frac{c_{n}}{m_{n}} \bigg ).
\end{equation}
On the other hand,  it follows from Lemma \ref{L4.1} that
$$d_\mathrm{TV}(M_{2}, Y^{d})\leq \frac{C}{\sqrt{\operatorname{Var}W}}= C\sqrt{\frac{c_{n}}{m_{n}}},$$
which, together with (\ref{22}), implies
 \begin{align*}
      d_\mathrm{TV} (W, Y^{d} ) \leq C \bigg(\sqrt{\frac{c_{n}}{m_{n}}}+\frac{d_{(n)}^{2}}{m_{n}c_{n}}+\frac{d_{(n)}^{4}}{c_{n}^{3}} \bigg)\leq C \bigg(\sqrt{\frac{c_{n}}{m_{n}}}+\frac{d_{(n)}^{4}}{c_{n}^{3}}\bigg ),
 \end{align*}
 where  in the last inequality we used the fact that $$\frac{d_{(n)}^{2}}{m_{n}c_{n}}\leq \sqrt{\frac{c_{n}}{m_{n}}}.$$

The proof is now complete assuming  (\ref{Claim1}).
\end{proof}
\subsection{Triangles in the Erd\H{o}s-R\'{e}nyi random graph}
In this subsection, we consider the number of triangles in the classical Erd\H{o}s-R\'{e}nyi random graph. Specifically, let $G = G(n, p)$ be a random
graph with $n$ vertices, where each edge appears with probability $p$, independent of all
other edges.
Set $J=\{1,2,\ldots,\binom{n}{3}\}$, and denote by $\{T_{i}, i\in J\}$ all possible triangles between these $n$ vertices. For each $i\in J$, define
$$X_{i}=
        \left\{\!
             \begin{array}{ll}
             1&\quad\text{if} \;\;T_{i} \;\;\text{exists in } G(n, p),\\
             0&\quad\text{otherwise}.
             \end{array} \right.
$$
Set, for each $i\in J$,
$$A_{i}= \{j \in J: e (T_{j} \cap T_{i} ) \geq 1 \},$$
 for each $j\in A_{i}$,
  $$A_{i j}= \{k \in J: e (T_{k} \cap (T_{i} \cup T_{j} ) ) \geq 1 \},$$
 and for each $k\in A_{ij}$,
   $$A_{i j k}= \{l \in J: e (T_{l} \cap (T_{i} \cup T_{j} \cup T_{k} ) ) \geq 1 \}.$$
We can easily verify  that $\{X_{i}, i\in J\}$ satisfies the local dependence structure (LD1)--(LD3) with  $A_{i}$, $A_{ij}$ and $A_{ijk}$.

Denote by $W$   the number of triangles in  $G(n, p)$, i.e., $W=\sum_{i \in J} X_{i}$.  Figure 3 illustrates  $G(10, 0.2)$, where $\{v_{1}, v_{4}, v_{5}\}$ and $\{v_{2}, v_{5}, v_{7}\}$ each form a triangle. Thus, $W=2$.

It is immediate that
    \begin{equation*}
         {\rm E}W=\binom{n}{3}p^{3},\quad \operatorname{Var}W= \binom{n}{3}p^{3}+\binom{n}{3}(3n-9)p^{5}-\binom{n}{3}(3n-8)p^{6}
    \end{equation*}
and
\begin{align}\label{T4G1}
        \Gamma_{1}=\binom{n}{3}p^{3},\quad  \Gamma_{2}=\binom{n}{3}(3n-9)p^{5}-\binom{n}{3}(3n-8)p^{6},\quad \Gamma_{3}=n^{5}p^{7}+o(n^{5}p^{7}).
    \end{align}

Our main result about $W$ reads as follows.
\begin{theorem}\label{T2.4}
 Suppose $n^{\alpha}p \rightarrow c > 0$ with $1/2 \leq \alpha < 1$.
  Then we have
\begin{align}\label{381}
     &d_\mathrm{TV}(W,M_{2})\leq n^{-1-\alpha},\quad  d_\mathrm{TV}(W,Y^{d})\leq n^{-3/2+3\alpha/2},\\
     &\label{382}
  d_{\rm loc}(W,M_{2})\leq n^{-2+\alpha},\quad d_{\rm loc}(W,Y^{d})\leq n^{-3+3\alpha}.
   \end{align}
\end{theorem}
\begin{remark}
    R\"{o}llin and Ross \cite{rollin2015local} studied  the total variation distance  between $W$ and a  translated Poisson $Z$, where $Z- \lfloor\mu-\sigma^{2}\rfloor \sim \operatorname{\cal P} (\sigma^{2}+\gamma)$, and obtained
 \begin{align}\label{ER1}
 d_\mathrm{TV}(W,Z)\leq O(n^{-1+\alpha}),
 \end{align}
where $\mu = {\rm E}W$, $\sigma^{2}= \operatorname{Var}W$, and $\gamma=\mu-\sigma^{2}- \lfloor\mu-\sigma^{2} \rfloor$.

 \begin{figure}[h!]
\centering
    \begin{tikzpicture}
\fill [black] (2.5,0.5) circle(1 pt)node[right]{\footnotesize$1$};
\fill [black] (2,1) circle(1 pt)node[above,right]{\footnotesize$2$};
\fill [black] (1,2) circle(1 pt)node[left,right]{\footnotesize$3$};
\fill [black] (0,2) circle(1 pt)node[below,left]{\footnotesize$4$};
\fill [black] (-1,1) circle(1 pt)node[above,left]{\footnotesize$5$};
\fill [black] (-1.5,0.5) circle(1 pt)node[above,left]{\footnotesize$6$};
\fill [black] (-1,0) circle(1 pt)node[below,left]{\footnotesize$7$};
\fill [black] (0,-1) circle(1 pt)node[below,left]{\footnotesize$8$};
\fill [black] (1,-1) circle(1 pt)node[below,right]{\footnotesize$9$};
\fill [black] (2,0) circle(1 pt)node[above,right]{\footnotesize$10$};

\draw[line width=0.2 pt,red](2.5,0.5)--(0,2) ;
\draw[line width=0.2pt,red](2.5,0.5)--(-1,1) ;
\draw[line width=0.2pt,red](2,1)--(-1,1) ;
\draw[line width=0.2pt,red](2,1)--(-1,0) ;
\draw[line width=0.2pt,blue](2,1)--(0,-1) ;
\draw[line width=0.2pt,blue](1,2)--(-1,1) ;
\draw[line width=0.2pt,blue](1,2)--(1,-1) ;
\draw[line width=0.2pt,red](0,2)--(-1,1);
\draw[line width=0.2pt,red](-1,1)--(-1,0) ;
\draw[line width=0.2pt](-1,0)--(2,0) ;
\end{tikzpicture}
  \caption{(Color online) Erd\"{o}s-R\'{e}nyi random
graph with $n=10$ and $p=0.2$ }
    \label{FI15}
\end{figure}

Our result  in (\ref{381}), either $d_\mathrm{TV}(W,M_{2}) \leq n^{-1-\alpha}$ or  $d_\mathrm{TV}(W,Y^{d})\leq n^{-3/2+3\alpha/2}$,  offers higher accuracy compared with (\ref{ER1}). In addition, for any $1/2 \leq \alpha < 1$, the approximation accuracy by $M_{2}$ is better than that by~$Y^{d}$. \\ \indent
There are several results in the literature regarding the normal approximation of $W$ in terms of the Wasserstein-1 distance and the Kolmogorov distance. For example, Barbour et al.~\cite{barbour1989central} gave
 $$d_{W_{1}}(W,Y) \leq C\left\{\!\begin{array}{ll}n^{-\frac{3}{2}} p^{-\frac{3}{2}} & \ \text { if } 0<p \leq n^{-\frac{1}{2}}, \\ n^{-1} p^{-\frac{1}{2}} &\  \text { if } n^{-\frac{1}{2}}<p \leq \dfrac{1}{2}, \\[2ex]
  n^{-1}(1-p)^{-\frac{1}{2}} & \ \text { if } \dfrac{1}{2}<p<1,\end{array}\right.$$
and R\"{o}llin \cite{rollin2022kolmogorov} showed that  for every $n \geq 3$ and every $0 < p < 1$,
     $$d_{\mathrm{K}}(W,Y) \leq C\left\{\begin{array}{ll}n^{-\frac{3}{2}} p^{-\frac{3}{2}} &\  \text { if } 0<p \leq n^{-\frac{1}{2}}, \\ n^{-1} p^{-\frac{1}{2}} &\  \text { if } n^{-\frac{1}{2}}<p \leq \dfrac{1}{2}, \\[2ex]
      n^{-1}(1-p)^{-\frac{1}{2}} &\  \text { if } \dfrac{1}{2}<p<1,\end{array}\right.$$
     where $Y \sim \mathcal{N}(\mu,\sigma^{2})$.\\\indent
We note that our upper bound (\ref{381}) for the total variation distance achieves comparable accuracy for $ n^{-1} \leq p \leq n^{-1/2} $. However, for $ p > n^{-1/2} $, the corresponding upper bound is less satisfactory. In fact, R\"{o}llin acknowledged this issue in \cite{rollin2015local}, where he mentioned: ``it is not clear if this is an artifact of our method or if a standard local limit theorem does not hold if $ p > n^{-1/2} $."
\end{remark}

Our method can also be extended to other subgraph counting problems. A key technical challenge is to estimate $ S(W) $ and $ {\rm E}W^3 $, which we leave to the interested reader.

\begin{remark}
We believe that the error bound in Theorem \ref{T2.4} is nearly optimal for such problems. Table 1 contains numerical experiments that support this belief.

Additionally, Figure 4 shows that the $ R $-squared value is very close to 1, indicating a good fit and further confirming that the order of approximation is consistent with our theoretical results.
\end{remark}

\begin{proof}[Proof of Theorem $\ref{T2.4}$] We only prove \eqref{381}  since  combining $\eqref{381}$,
\cite[Theorem 2.2]{rollin2015local}, and Corollary \ref{C21} is sufficient for proving \eqref{382}.  It follows from (\ref{T4G1}) that $$\Gamma_{1}(W)\sim n^{3}p^{3}/6,\quad \Gamma_{2}(W)\sim n^{4}p^{5}/2,\quad\Gamma_{3}(W)\sim n^{5}p^{7}.$$

\begin{table}[h!]\footnotesize\tabcolsep 16pt
\begin{center}
\caption{Error in  the normal approximation of triangle counting}\label{Ta1}
\end{center}
\begin{center}
\begin{tabular}{ccccc}
\toprule
\diagbox{$p$}{$N$} & $300$ & $400$ & $500$ & $600$  \\
\hline
$N^{-0.6}$  &  $0.04600$   &  $0.03589$  & $0.03360$    &   $0.03050$  \\
$N^{-0.7}$& $ 0.06490$  & $0.05900$   & $0.05100$   &  $0.04727$   \\
$N^{-0.8}$  &  $0.11980$   &  $0.11120$   &   $0.10740$  &   $0.10150$   \\
\bottomrule
\end{tabular}
\end{center}
\vspace{4mm}\end{table}

\begin{figure}[h!]\footnotesize
  \centering
  \begin{tikzpicture}
    \begin{axis}[
        xlabel={$x$ in $N^{-3/2}p^{-3/2}$},
        ylabel={$y$ in $d_\mathrm{TV}(W,Y)$},
        width=4in,
        height=2.5in,
        legend pos=north west,
        legend style={},
        xticklabel style={/pgf/number format/.cd,
            fixed,
            fixed zerofill,
            precision=2,
        /tikz/.cd},
        yticklabel style={/pgf/number format/.cd,
            fixed,
            fixed zerofill,
            precision=2,
        /tikz/.cd},
        title style={font=\footnotesize},  
        label style={font=\footnotesize},  
    ]
      \addplot table {
        X       Y
        0.1806  0.1198
        0.1657  0.1112
        0.155   0.1074
        0.1467  0.1015
        0.0768  0.0649
        0.0675  0.059
        0.061   0.051
        0.0562  0.04727
        0.0326  0.046
        0.0275  0.03589
        0.024   0.0336
        0.0215  0.0305
      };

      \addplot table {
        X       Y
        0.1806  0.12958
        0.1657  0.1188898
        0.155   0.1112125
        0.1467  0.10152874
        0.0768  0.055096
        0.0675  0.0484055
        0.061   0.043782
        0.0562  0.0403304
        0.0326  0.0234184
        0.0275  0.019837
        0.024   0.0172365
        0.0215  0.01545
      };

      \addlegendentry{\footnotesize\hspace{-10mm}$y(x)$}
      \addlegendentry{\footnotesize$y=0.7175x$}
    \end{axis}
  \end{tikzpicture}
  \label{F154}
  \caption{(Color online) Linear function fitting graph of $y(x)$ with $R^{2}=0.9736$}
\end{figure}

\noindent
Since $\Gamma_{2}>0$, we choose $M_{2}=NB(r,\bar{p})*\mathcal{P}(\lambda)$ to approximate $W$,  where $r, \bar{p}$, and $\lambda$ are determined by~(\ref{k2}). Some simple calculations show
  \begin{align*}
r=O(n^{2}p),  \quad \bar{p} =1- O(np^2),  \quad
\bar{q}=1-\bar{p}=O(np^{2}),\quad \lambda = O(n^{3}p^{3}) .\notag
\end{align*}
We control $\gamma$, $\theta_{2}$, and $S(W)$ separately.
Note that $np^{2} \rightarrow 0$, and then
$${\rm E}X_{i}={\rm E}X_{i}^{2}=p^{3}\geq Cnp^{5}\geq \sum_{j \in A_{i}\backslash \{i\}}{\rm E}X_{i}X_{j}.$$
Thus, we have
\begin{align}
   &\gamma^{1}:= \sum_{i\in J}\sum_{j \in A_{i}\setminus\{i\}}  \sum_{k \in A_{ij}}\sum_{l \in A_{ijk}}  \sum_{({\rm E})}{\rm E}X_{i}({\rm E})X_{j}{\rm E}X_{k}({\rm E})X_{l}
   \notag\\
   &\quad\  \leq  \sum_{i\in J} \sum_{j \in A_{i}\setminus \{i\}}\sum_{k\in A_{ij}}{\rm E}X_{i}X_{j}{\rm E}X_{k}^{2} \notag\\&\quad\leq Cn^{5}p^{8},\notag\\
 & \gamma^{2}:= \sum_{i\in J}\sum_{j \in A_{i}\backslash\{i\}}  \sum_{k \in A_{ij}\backslash A_{i}}\sum_{l \in A_{ijk}}  \sum_{({\rm E})}{\rm E}X_{i}({\rm E})X_{j}X_{k}({\rm E})X_{l}
   \notag\\
   &\quad\   \leq
   \sum_{i\in J} \sum_{j \in A_{i}\setminus \{i\}}\sum_{k\in A_{ij}\setminus A_{i}}{\rm E}X_{i}X_{j}X_{k}^{2} \notag\\&\quad\leq Cn^{5}p^{7},\notag
 \end{align}
 from which it follows that
   \begin{align}\label{TN1}
   \gamma=\gamma^{1}+\gamma^{2}\leq Cn^{5}p^{7}.
\end{align}

Turning to $\theta_{2}$, it  is easy to see  from Lemma \ref{s} that
\begin{align}\label{TN2}
    & \theta_{2} = \frac{\lambda q}{\lambda p+rq}\rightarrow 0.
\end{align}

Finally, we have
\begin{equation}\label{318}
   S(W)= O(n^{-3}p^{-3}),
\end{equation}
whose proof follows from \cite[Lemma 4.10]{rollin2015local} and is postponed to  \ref{BB}.

Substituting (\ref{TN1})--(\ref{318}) into  Theorem \ref{main}(ii) and using Corollary   \ref{main2}, we complete the proof of (\ref{381}).
\end{proof}


\textbf{Acknowledgements:} This work was   supported by National Natural Science Foundation of China (Grant Nos. 12271475 and U23A2064).
The authors  express their sincere gratitude to referees for the careful readings and constructive comments.




\begin{appendix}

\section{Proof of (\ref{Claim1})}\label{A1}
Call an edge \textit{undetermined}  if its vertices are colored independently and uniformly with $c_{n}$ colors. Let   $A$ denote the event that each of the $l$  \textit{undetermined} edges has vertices of different colors.
We now present a lemma that plays a crucial role in estimating the probability ${\rm P}(A)$.
\begin{lemma}\label{LA}
It holds that
 $1-\frac {l}{c_n}\leq {\rm P}(A) <1.$
\end{lemma}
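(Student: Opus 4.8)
The plan is to write $A$ as the complement of a union of "bad" events — one for each undetermined edge — and apply the union bound, which needs no independence between the edges. First I would fix notation: label the $l$ undetermined edges $e_1,\dots,e_l$ and let $B_s$ be the event that the two endpoints of $e_s$ receive the same color. Then $A=\bigcap_{s=1}^{l}B_s^{c}$ and $A^{c}=\bigcup_{s=1}^{l}B_s$.

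The only computation required is the marginal probability $\mathbb{P}(B_s)$. Since both endpoints of $e_s$ are colored independently and uniformly among the $c_n$ colors, conditioning on the color assigned to one endpoint shows the other endpoint matches it with probability exactly $1/c_n$; hence $\mathbb{P}(B_s)=1/c_n$ for every $s$. This is a statement about a single edge, so it holds regardless of how the undetermined edges overlap, and this is exactly what lets the union bound go through without any independence hypothesis on the family $\{B_s\}$.

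For the lower bound I would use subadditivity: $\mathbb{P}(A^{c})=\mathbb{P}\big(\bigcup_{s=1}^{l}B_s\big)\le\sum_{s=1}^{l}\mathbb{P}(B_s)=l/c_n$, so $\mathbb{P}(A)\ge 1-l/c_n$. For the strict upper bound, observe that (in the relevant case $l\ge 1$) $A\subseteq B_1^{c}$, hence $\mathbb{P}(A)\le\mathbb{P}(B_1^{c})=1-1/c_n<1$ because $c_n\ge 2$. Together these give $1-\tfrac{l}{c_n}\le\mathbb{P}(A)<1$.

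\textbf{Main obstacle.} There is essentially none: the argument is a one-line union bound. The only point deserving a word of care is that the equal-color probability of a \emph{single} undetermined edge is exactly $1/c_n$ even when the edges share vertices, so that subadditivity may be applied to $A^{c}$ without assuming the events $B_s$ are independent.
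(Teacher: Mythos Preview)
Your argument is correct. The union bound on $A^{c}=\bigcup_{s=1}^{l}B_s$ together with $\mathbb{P}(B_s)=1/c_n$ gives the lower bound in one line, and the inclusion $A\subseteq B_1^{c}$ handles the strict upper bound when $l\ge 1$.

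This is, however, a genuinely different route from the paper's proof. The paper orders the vertices $\{1,\dots,m\}$ appearing among the $l$ edges, writes $\mathbb{P}(A)$ as a telescoping product of conditional probabilities (each factor being the probability that the newly revealed vertex avoids the colors of all of its already-revealed neighbours, given what has been seen so far), bounds each factor below by $1-r_i/c_n$ where $r_i$ is the number of edges from vertex $i$ back to $\{1,\dots,i-1\}$, and finally applies $\prod_i(1-r_i/c_n)\ge 1-\sum_i r_i/c_n=1-l/c_n$. Your approach bypasses the sequential conditioning and the product-to-sum inequality entirely: since the marginal of each $B_s$ is exactly $1/c_n$ regardless of how the edges overlap, subadditivity suffices. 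The paper's decomposition is more informative in principle (it tracks the dependence structure and could in other settings yield a sharper product bound), but for the stated inequality your union-bound argument is shorter and equally rigorous.
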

\begin{proof}
    The upper bound is trivial. We now focus on the lower bound. Assume that the $l$ edges contain $m$ vertices $\{1,2,\ldots,m\}$. Denote by $r^{i}_{j,k,\ldots,l}$  the total number of edges between $\{i\}$ and $\{j,k,\ldots, l\}$, and by $R^{i}_{j,k,\ldots, l}$ the indicator that the two vertices of each edge between  $\{i\}$ and $\{j,k,\ldots,l\}$ have different colors. Then,      \begin{align}\label{AA1}
        {\rm P}(A)&= {\rm P}(R^{1}_{2}R^{3}_{1,2}\cdots R^{m}_{1,2,\ldots,m-1}=1)\notag\\
        &= {\rm P}(R^{1}_{2}=1){\rm P}(R^{3}_{1,2}=1| R^{1}_{2}=1)\cdots {\rm P}(R^{m}_{1,2,\ldots,m-1}=1| R^{m-1}_{1,2,\ldots,m-2}=1).
    \end{align}
It is immediate that for each $1\leq i \leq m$,
\begin{align}\label{AA2}
    {\rm P}(R^{i}_{1,2,\ldots,i-1}=1| R^{i-1}_{1,2,\ldots,i-2}=1)\geq \bigg(1-\frac{r^{i}_{j,k,\ldots,i}}{c_{n}}\bigg )
\end{align}
and
\begin{align}\label{AA3}
    \sum_{i=2}^{m}r^{i}_{j,k,\ldots,i}=l.
\end{align}
Substituting (\ref{AA2}) and (\ref{AA3}) into (\ref{AA1}), we get
\begin{align}
        {\rm P}(A)\geq \bigg(1-\frac{r^{1}_{2}}{c_{n}}\bigg ) \bigg (1-\frac{r^{3}_{1,2}}{c_{n}}\bigg )\cdots \bigg (1-\frac{r^{m}_{1,2,\ldots,m-1}}{c_{n}}\bigg )\geq  1-\frac{l}{c_{n}}.
    \end{align}
The proof is complete.
\end{proof}

 Set $m=\,\sharp \{l\in A_{ijk}, X_{l}=1\}$, and  let $W_{x}$  denote a random variable distributed as $\mathcal{L}(W|X_{A_{ijk}} = x)$. It is immediate that
    \begin{align}\label{C32}
    S_{2}(W|X_{A_{ijk} }=x)&= \sum_{k=0}^{m_{n}} | {\rm P}(W_{x}=m+k)-2{\rm P}(W_{x}=m+k-1)+{\rm P}(W_{x}=m+k-2) |\notag\\
    &\leq \sum_{k=0}^{m_{n}} | {\rm P}(W=k)-2{\rm P}(W=k-1)+{\rm P}(W=k-2)  |\notag\\
    &\quad\,  +4\sum_{k=0}^{m_{n}} | {\rm P}(W_{x}=m+k)-{\rm P}(W=k) |\notag\\
    &=   S_{2}(W)+4\sum_{k=0}^{m_{n}}| {\rm P}(W_{x}=m+k)-{\rm P}(W=k) |.
\end{align}

It remains to estimate the upper bound for the second term on the right-hand side  of (\ref{C32}). To do this, we introduce some additional notations. Divide the index set $J$ into two disjoint subsets, denoted by $J_{1}$ and  $J_{2}$, where  $J_1$ consists of $l\in J$ such that the edge $e_{l} $  has  no common vertex with the edges from $\{e_{l}, l\in A_{ijk}\}$. Define $W_{1}=\sum_{i\in J_{1}}X_{i}$ and $W_{2}=\sum_{i\in J_{2}}X_{i}$. Obviously, $W=W_{1}+W_{2}$.
For simplicity, set
\begin{align*}
  &  W_{2,l}\xlongequal{d}W_{2}| W_{1}=l,\quad
 W_{x} \xlongequal{d} W| X_{A_{ijk}} = x,\\
& W_{x,1}\xlongequal{d}W_{1}| X_{A_{ijk}} = x,\quad  W_{x,2,l}\xlongequal{d}W_{2}| X_{A_{ijk}} = x, W_{1}=l.
\end{align*}
We can easily derive that for every $k \ge 0$,
\begin{align*}
    {\rm P}(W=k)=\sum_{i=0}^{k}{\rm P}(W_{1}=k-i){\rm P}
(W_{2,k-i}=i)
\end{align*}
and
\begin{align*}
{\rm P}(W_{x}=k+m)=\sum_{i=0}^{k}{\rm P}(W_{x,1}=k-i){\rm P}
(W_{x,2,k-i}=m+i).
\end{align*}
Note that $W_{1}$ is independent of $X_{A_{ijk}}$. Thus, $W_{x,1}\xlongequal{d} W_{1}$. We have
\begin{align}\label{SUB2}
   {\rm P}(W=k)- {\rm P}(W=k+m | x_{A_{ijk}}) = \sum_{i=0}^{k}{\rm P}(W_{1}=k-i) [{\rm P}
(W_{2,k-i}=i)-{\rm P}
(W_{x,2,k-i}=m+i) ].
\end{align}
\begin{lemma}\label{LB11}
    We have
    \begin{equation*}\label{W2k}
    {\rm P}(W_{2,k}=0) \geq 1-\frac{Cd_{(n)}^{2}}{c_{n}}
\end{equation*}
and
\begin{equation*}\label{Wxk}
    {\rm P}(W_{x,2,k}=m) \geq 1-\frac{Cd_{(n)}^{2}}{c_{n}}.
\end{equation*}
\end{lemma}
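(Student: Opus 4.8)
The plan is to prove both inequalities by a sequential colouring argument of the type underlying Lemma~\ref{LA}, exploiting the fact that only a small ``core'' of vertices is involved. Let $V_0$ denote the set of all endpoints of the edges indexed by $A_{ijk}$. Since $\sharp A_{ijk}=O(d_{(n)})$ and every edge has two endpoints, $\sharp V_0=O(d_{(n)})$; moreover each edge of $J_2$ has at least one endpoint in $V_0$ and every vertex has degree at most $d_{(n)}$, so $\sharp J_2=O(d_{(n)}^{2})$. Finally, by the very definition of $J_1$, no edge of $J_1$ meets $V_0$, so $W_1=\sum_{l\in J_1}X_l$ is a function of the colours of the vertices outside $V_0$ only.

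For the first inequality I would condition on the $\sigma$-field $\mathcal F$ generated by the colours of all vertices outside $V_0$: this makes $W_1$ measurable, while conditionally on $\mathcal F$ the colours of the vertices of $V_0$ are still i.i.d.\ uniform on $\{1,\dots,c_n\}$. For each $e_l\in J_2$ one endpoint lies in $V_0$, so conditionally on $\mathcal F$ the probability that $e_l$ is monochromatic is exactly $1/c_n$; a union bound over the $O(d_{(n)}^{2})$ edges of $J_2$ gives $\mathbb P(W_2=0\mid\mathcal F)\ge 1-Cd_{(n)}^{2}/c_n$ almost surely, and averaging over $\mathcal F$ restricted to $\{W_1=k\}$ yields $\mathbb P(W_{2,k}=0)=\mathbb P(W_2=0\mid W_1=k)\ge 1-Cd_{(n)}^{2}/c_n$.

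For the second inequality I would run the same exposure scheme but with conditioning event $\{X_{A_{ijk}}=x,\ W_1=k\}$. Conditioning on $x$ freezes the $m$ monochromatic edges inside $A_{ijk}$ and imposes equality/inequality constraints among the colours of the vertices of $V_0$, so the events $\{X_l=1\}$ for $l\in J_2\setminus A_{ijk}$ are no longer governed by an i.i.d.\ colouring. I would order the vertices of $V_0$ so that those already constrained by $x$ come first, expose them one at a time (treating the $x$-constraints as part of the conditioning), and then, for each newly exposed vertex, count the colours it is forbidden to take in order to keep all its edges in $J_2\setminus A_{ijk}$ bichromatic. Since there are only $O(d_{(n)})$ core vertices, hence $O(d_{(n)})$ colour classes and constraints, and since $d_{(n)}^{2}\ll c_n$, this forbidden set has size $o(c_n)$, so the conditional probability that such an edge closes monochromatically is at most, say, $2/c_n$; summing over the $O(d_{(n)}^{2})$ edges of $J_2\setminus A_{ijk}$ then gives $\mathbb P(W_{x,2,k}=m)\ge 1-Cd_{(n)}^{2}/c_n$.

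The step I expect to be the main obstacle is precisely this last bookkeeping: one must verify that conditioning on the \emph{entire} vector $X_{A_{ijk}}=x$ — which simultaneously forces some core edges to be monochromatic and others bichromatic, and may correlate the colours of distant core vertices through chains of constraints — cannot push the per-edge collision probability above $O(1/c_n)$, and that every edge of $J_2\setminus A_{ijk}$ is indeed resolved once all core vertices have been exposed. This is where the hypothesis $d_{(n)}^{2}\ll c_n$ and the smallness of $\sharp A_{ijk}$ enter essentially; a clean treatment needs a careful choice of the exposure order together with the observation that an equality constraint among already-revealed colours only shrinks the forbidden set for the current vertex, so that the elementary chain estimate of Lemma~\ref{LA} still applies with at most a constant-factor loss.
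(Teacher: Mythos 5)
Your argument for the first inequality is correct and is in fact a cleaner route than the paper's. The paper introduces the vertex/edge classes $V_{1},V_{2},V_{1,3},U_{1},U_{2}$, applies the sequential-exposure bound of Lemma \ref{LA} to the edges of $U_{2}$, and treats the crossing edges through the product bound $(1-d_{(n)}/c_{n})^{\sharp V_{1,3}}$; you instead condition on the $\sigma$-field $\mathcal F$ of the colours outside the core $V_{0}$, observe that $W_{1}$ is $\mathcal F$-measurable while the core colours remain i.i.d.\ uniform, and union-bound the exactly-$1/c_{n}$ per-edge collision probabilities over the $O(d_{(n)}^{2})$ edges of $J_{2}$. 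Both arguments rest on the same two facts (the core has $O(d_{(n)})$ vertices and $J_{2}$ has $O(d_{(n)}^{2})$ edges, each with an endpoint whose colour is free), but your conditioning avoids the chain estimate of Lemma \ref{LA} entirely and gives the bound $1-\sharp J_{2}/c_{n}$ in one line.

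For the second inequality, however, your proposal has a genuine gap, and it is exactly the one you flag. Conditioning on $X_{A_{ijk}}=x$ does not merely shrink forbidden colour sets: a configuration $x$ containing two monochromatic edges $(z,u)$ and $(z,v)$ of $A_{ijk}$ forces $\mathrm{colour}(u)=\mathrm{colour}(v)$ with conditional probability one, and if the graph also contains the edge $(u,v)$ and that edge lies in $J_{2}\setminus A_{ijk}$ (which happens whenever $(u,v)$ is not adjacent to $e_{i}\cup e_{j}\cup e_{k}$), then $X_{(u,v)}=1$ almost surely under the conditioning, so the per-edge collision probability is $1$ rather than $O(1/c_{n})$ and indeed $\mathbb P(W_{x,2,k}=m)=0$ for that $x$. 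Your proposed remedy --- that equality constraints among already-revealed colours only shrink the forbidden set for the newly exposed vertex --- addresses the wrong direction: it controls edges between an exposed and an unexposed vertex, but not edges both of whose endpoints have already been glued together by $x$. Any correct treatment must either exclude such configurations (e.g.\ by restricting the essential supremum, or by showing they carry total probability $O(d_{(n)}^{2}/c_{n})$ and absorbing them into the error) or enlarge $A_{ijk}$ so that every edge of $J_{2}$ whose endpoints can be identified by $x$ already belongs to $A_{ijk}$. The paper itself disposes of this inequality with the phrase ``by the same token,'' so it does not supply the missing step either; but as written your proposal proves only the first of the two bounds.
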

\begin{proof}Introduce the following subsets:
\begin{align*}
    &V_{1} = \{ v \in \mathcal{V} : \exists\, l \in A_{ijk} \text{ such that } v \text{ is a vertex of } e_{l} \},\\
&V_{2} = \{ v \in \mathcal{V} : \exists\, l \in J_2\, \backslash A_{ijk} \text{ such that } v \text{ is a vertex of } e_{l} \},\\
& U_{1}=\{e:e=\{v_{s},v_{t}\}, v_{s}\in V_{1}, v_{t} \notin  V_{1}\},\\
& U_{2}=\{e:e=\{v_{s},v_{t}\}, v_{s}\in V_{1}, v_{t} \in  V_{1}\},\\
& V=\{  v\in \mathcal{V}:\,   \text{the color of $v$ is different from that of its neighborhoods}\},\\
& U=\{  u\in \mathcal{E}:\,   \text{the two vertices of $u$ have different colors}\},\\
& V_3=V_1\,\backslash V_2,\quad V_{1,2}=V_{1}\cap V_{2},\quad U^{1,2}=U_{1}\cup U_{2}.
\end{align*}
Some calculus  gives
$$\sharp V_{1} =O(d_{(n)}), \quad   \sharp  V_{1,3} =O(d_{(n)})  $$
and
$$ \sharp  U_{1} =O(d_{(n)}^{2}), \quad \sharp  U_{2}  =O(d_{(n)}^{2}),  \quad \sharp U^{1,2} =O(d_{(n)}^{2}).$$
Applying Lemma \ref{LA} to the event $\{X_{l}=0,l\in U_{2}\}$ yields
\begin{equation}\label{V123}
    1-\frac{\sharp U_{2} }{c_{n}}\leq {\rm P}(X_{l}=0,l\in U_{2})<1.
\end{equation}
Also,  note that
$$\{ X_{l}=0,l\in U_{1,2}\} = \{\{v_{s},v_{t}\}\in U \;\text{for all}\;v_{s}\in V_{1,2},v_t\in V_{3}; X_{l}=0,l\in U_{2}\}$$
and $$ \{\{v_{s},v_{t}\}\in U \;\text{for all}\;v_{s}\in V_{1,2},v_t\in V_{3}\}\supset \{v\in V \;\text{for all}\;v\in V_{1,2}\}.$$

Since each vertex in $V_{1,2}$ is colored independently,  we have
\begin{align*}
    &{\rm P} (v\in V \;\text{for all}\;v\in V_{1,2}\mid X_{l}=0,l\in U_{2},W_1=k )\notag\\
    &\quad\geq {\rm P} (v\in V \;\text{for all}\;v\in V_{1,2}\mid \;\text{the neighborhoods of each}\; v\in V_{1,2}\; \text{have different colors} )\notag\\
    &\quad\geq  \bigg(1-\frac{d_{(n)}}{c_{n}}\bigg)^{\sharp V_{1,2} }.
\end{align*}
Also, note that the restriction $\{W_{1}=k\}$  only affects the color of vertices in $V_{1,2}$. Therefore, it follows that
\begin{align*}
    {\rm P}(W_{2,k}=0)&= {\rm P}(X_{l}=0,l\in U^{1,2}\mid W_1=k)\notag\\
    &= {\rm P}(X_{l}=0,l\in U_{2}; \{v_{s},v_{t}\}\in U \;\text{for all}\;v_{s}\in V_{1,2},v_t\in V_{3}\mid W_1=k)\notag\\
    &\geq {\rm P}(X_{l}=0,l\in U_{2}){\rm P}(v\in V \;\text{for all}\;v\in V_{1,2} \mid X_{l}=0,l\in U_{2},W_1=k)\notag\\
    &\geq    \bigg(1- \frac{\sharp U_{2} }{ c_{n}}\bigg)\bigg(1-\frac{d_{(n)}}{c_{n}}\bigg)^{\sharp V_{1,2} }.
\end{align*}
Since $\sharp U_{2}=O(d_{(n)}^{2})$ and $\sharp V_{1,2}=O(d_{(n)})$,
\[
{\rm P}(W_{2,k}=0) \geq  1-\frac{Cd_{(n)}^{2}}{c_{n}}.
\]
By the same token,
\begin{equation*}
    {\rm P}(W_{x,2,k}=m) \geq 1-\frac{Cd_{(n)}^{2}}{c_{n}}.
\end{equation*}
This completes the proof.\end{proof}
\begin{proof}[Proof of $(\ref{Claim1})$] By Lemma \ref{LB11},
\begin{align}\label{SUB1}
     |{\rm P}
(W_{2,k}=0)-{\rm P}
(W_{x,2,k}=m )| \leq \frac{Cd_{(n)}^{2}}{c_{n}}.
\end{align}
Therefore, substituting (\ref{SUB1}) into (\ref{SUB2}), we have
\begin{align*}
   & |{\rm P}(W=k)- {\rm P}(W=k+m |  X_{A_{ijk}}=x )|\notag\\
   &\quad\leq {\rm P}(W_{1}=k)
 \frac{Cd_{(n)}^{2}}{c_{n}}+\bigg|\sum_{i=1}^{k}{\rm P}(W_{1}=k-i)[{\rm P}
(W_{2,k-i}=i)-{\rm P}
(W_{x,2,k-i}=m+i)]\bigg|.\notag
\end{align*}
Summing over $k$ from zero to $m_{n}$ yields
\begin{align}\label{C33}
    & \sum_{k=0}^{m_{n}}|{\rm P}(W=k)- {\rm P}(W=k+m | X_{A_{ijk}}=x)|\notag\\
    &\quad\leq  \frac{Cd_{(n)}^{2}}{c_{n}}+\bigg|\sum_{k=0}^{m_{n}}\sum_{i=1}^{k}{\rm P}(W_{1}=k-i) [{\rm P}(W_{2,k-i}=i)-{\rm P}(W_{x,2,k-i}=m+i) ]\bigg|\notag\\
&\quad\leq  \frac{Cd_{(n)}^{2}}{c_{n}}+\bigg|\sum_{k=0}^{m_{n}}\sum_{i=1}^{k}{\rm P}(W_{1}=k) [{\rm P}
(W_{2,k}=i)-{\rm P}
(W_{x,2,k}=m+i) ]\bigg|\notag\\
  &\quad\leq \frac{Cd_{(n)}^{2}}{c_{n}}+\sum_{k=0}^{m_{n}}{\rm P}(W_{2,k}> 0){\rm P}(W_{1}=k)+\sum_{k=0}^{\infty}{\rm P}(W_{x,2,k}> m){\rm P}(W_{1}=k)\notag\\
   &\quad= C\bigg\{\frac{d_{(n)}^{2}}{c_{n}}+2\sum_{k=0}^{m_{n}} \frac{d_{(n)}^{2}}{c_{n}}{\rm P}(W_{1}=k)\bigg\}\notag\\
   &\quad=  \frac{Cd_{(n)}^{2}}{c_{n}},
\end{align}
where in the third inequality we used Lemma \ref{LB11} again. Since $x$ is arbitrary, substituting (\ref{C33}) into~(\ref{C32}) proves that  $ S(W) \leq S_{2}(W)+Cd_{(n)}^{2}/c_{n}.$
\end{proof}

\section{Proof of (\ref{318})}\label{BB}
Fix $X_{A_{ijk}}=x_{A_{ijk}}$. Let $\mathcal{G}$ be all possible graphs with  the vertex set $V=\{1,2,\ldots, n\}$.  Denote by $V_{1}$   the vertices in $T_{i}$, $T_{j}$, and $T_{k}$, and set  $V_{2}=V\setminus V_1$.  Construct a stationary reversible Markov chain $\{Z_{n},n\geq 0\}$ on $\mathcal{G}$ as  follows.

\textbf{Step 1.}   The initial graph $Z_0$ consists of the following edges. Suppose that we are given a pair of vertices $i, j\in V$. If either $i$ or $j$ is from $V_1$, then assign an edge $e_{ij}$ as in $G(n, p)$; otherwise, generate a new edge with probability $p$ independently.

\textbf{Step 2.} Given the present graph $Z_n$,   $Z_{n+1}$ is obtained by  choosing two vertices from $Z_{n}$ uniformly at random, except from $A_{ijk}$, and independently resampling the edge between them.

It is easy to verify that the Markov chain constructed above is stationary and reversible.   Let $W_0, W_1$, and $W_2$  be the number of triangles in
$Z_0, Z_1$, and  $Z_2$, respectively. Define
    $$ Q_{m}(x)={\rm P} [W_1=W_0+m | Z_0=x ],\quad  q_{m}={\rm E}Q_{m}(Z_0)={\rm P}(W_1=W_0+m),$$
    and
    $$Q_{m_{1}, m_{2}}(x)={\rm P} [W_1=W_0+m_{1}, W_2=W_1+m_{2} | Z_0=x ].$$
We need the following lemma, which is a direct consequence
 of \cite[Theorem 3.7]{rollin2015local}.
\begin{lemma}\label{LB1}  For  any positive integers $m$, $m_{1}$, and $ m_{2}$, we have
\begin{align*}
        S_{2}(W_0) &\leq  \frac{1}{q_{m}^{2}}  (2 \operatorname{Var} Q_{m}(Z_0)+{\rm E} |Q_{m, m}(Z_0)-Q_{m}(Z_0)^{2} | \\
        &  \,\quad+2 \operatorname{Var} Q_{-m}(Z_0)+{\rm E} |Q_{-m,-m}(Z_0)-Q_{-m}(Z_0)^{2} |).
    \end{align*}
\end{lemma}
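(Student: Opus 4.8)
The plan is to obtain the stated inequality as a direct specialization of Theorem 3.7 of \cite{rollin2015local} to the stationary reversible Markov chain $\{Z_n,n\geq 0\}$ constructed in Steps 1--2. Recall that Theorem 3.7 bounds $\mathcal{S}_2$ of a functional of the stationary law of a reversible chain by $q_m^{-2}$ times the sum of the variances of the one-step increment probabilities, here $\operatorname{Var}Q_{m}(Z_0)$ and $\operatorname{Var}Q_{-m}(Z_0)$, and the mean absolute discrepancies between the two-step probabilities $Q_{\pm m,\pm m}(Z_0)$ and the squares $Q_{\pm m}(Z_0)^{2}$. Thus the only work is to verify that the present construction matches the template of that theorem, after which the conclusion is purely formal.

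First I would confirm that $\{Z_n\}$ is stationary and reversible and that $W_0$ is the associated functional. Conditionally on $X_{A_{ijk}}=x_{A_{ijk}}$, Step 1 produces $Z_0$ with exactly the law of $G(n,p)$ given $X_{A_{ijk}}=x_{A_{ijk}}$, which is the product measure that leaves the edges forced by $X_{A_{ijk}}$ fixed and keeps all remaining edges i.i.d.\ Bernoulli$(p)$. Step 2 is a single-edge Glauber update on a uniformly chosen non-frozen edge, which satisfies detailed balance with respect to this product measure; hence $(Z_0,Z_1,Z_2)$ is a stationary reversible segment. Setting $W_m$ to be the number of triangles in $Z_m$, the one-step increment $W_1-W_0$ changes by at most the number of triangles through the resampled edge (so it takes finitely many values, the state space being finite), and $Q_m,q_m,Q_{m_1,m_2}$ are precisely the one- and two-step increment probabilities that enter Theorem 3.7. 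I would then check the residual hypotheses of that theorem---positivity of $q_m$ for the chosen $m$, so that division by $q_m^{2}$ is legitimate, and the integrability conditions, which hold trivially because $\mathcal{G}$ is finite---and plug $W_0$ in to recover the displayed bound.

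The main obstacle is bookkeeping rather than analysis: one must make sure that the ``except from $A_{ijk}$'' restriction in Step 2 genuinely leaves a reversible dynamics on the complementary edge set (i.e.\ freezing the $X_{A_{ijk}}$-determined structure does not break detailed balance), and that the functional $W_0$ together with its signed increments fits the symmetry requirement implicit in Theorem 3.7, under which $+m$ and $-m$ contribute symmetrically to the bound. Once this compatibility is checked, Lemma \ref{LB1} follows with no further computation.
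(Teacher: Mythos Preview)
Your proposal is correct and matches the paper's own treatment exactly: the paper does not give an independent proof but simply records that the inequality is a direct consequence of Theorem~3.7 of \cite{rollin2015local}, applied to the stationary reversible chain $\{Z_n\}$ of Steps~1--2 with functional $W_0$. The hypothesis checks you outline (stationarity and reversibility of the Glauber dynamics on the non-frozen edges, finiteness of the state space, positivity of $q_m$) are precisely what is needed to invoke that theorem, and nothing further is required.
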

Next, it is sufficient to compute $\operatorname{Var} Q_{m}(Z_0)$ and ${\rm E}\left|Q_{m, m}(Z_0)-Q_{m}(Z_0)^{2}\right|$ for $m=1,-1$.
\begin{lemma}\label{LB2}
   We have
     \begin{align}\label{B1}
         &\operatorname{Var} Q_{1}(Z_0)=O(p^{5}),\\
         &\label{B2}
          \operatorname{Var} Q_{-1}(Z_0)=O\bigg(\frac{p^{3}}{n}\bigg).
     \end{align}
\end{lemma}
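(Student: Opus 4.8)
The plan is to write $Q_1$ and $Q_{-1}$ explicitly as functions of the graph $Z_0$ and then to read off their variances from a covariance expansion over pairs of vertex pairs. A single resampling step replaces the status of one uniformly chosen admissible pair $\{u,v\}$, and if $c_{uv}=c_{uv}(Z_0)$ denotes the number of common neighbours of $u,v$ in $Z_0$, the triangle count changes by $+c_{uv}$ exactly when $\{u,v\}$ was a non-edge and becomes an edge (probability $p$), by $-c_{uv}$ when it was an edge and becomes a non-edge (probability $1-p$), and by $0$ otherwise; moreover $c_{uv}$ does not depend on the status of the pair $\{u,v\}$ itself. Hence, writing $N=\Theta(n^2)$ for the number of admissible pairs and abbreviating $E=E(Z_0)$,
\[
Q_1(Z_0)=\frac{p}{N}\sum_{\{u,v\}}\mathbf{1}[\{u,v\}\notin E]\,\mathbf{1}[c_{uv}=1],\qquad
Q_{-1}(Z_0)=\frac{1-p}{N}\sum_{\{u,v\}}\mathbf{1}[\{u,v\}\in E]\,\mathbf{1}[c_{uv}=1].
\]
Since $\mathbb{P}(c_{uv}=1)=(n-2)p^2(1-p^2)^{n-3}=\Theta(np^2)$ under $n^\alpha p\to c$ with $1/2\le\alpha<1$, this also gives $q_1=\Theta(np^3)$ and $q_{-1}=\Theta(np^3)$, which we record for later use.

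Writing $Q_1(Z_0)=\frac pN\sum_\alpha Y_\alpha$ with $Y_\alpha=\mathbf{1}[\alpha\notin E]\,\mathbf{1}[c_\alpha=1]$ over admissible pairs $\alpha$, we have $\operatorname{Var}Q_1(Z_0)=\frac{p^2}{N^2}\sum_{\alpha,\beta}\operatorname{Cov}(Y_\alpha,Y_\beta)$, and I would split this sum according to $|\alpha\cap\beta|\in\{2,1,0\}$. The diagonal ($\alpha=\beta$, $N$ terms) contributes at most $\frac{p^2}{N}\max_\alpha\mathbb{E}Y_\alpha=\Theta(p^4/n)=o(p^5)$, using $np\to\infty$. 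The off-diagonal terms are the crux: $Y_\alpha$ and $Y_\beta$ are independent except through the few edges entering both common-neighbour counts $c_\alpha$ and $c_\beta$, and on expanding $\mathbb{E}[Y_\alpha Y_\beta]$ as a sum over the realised common neighbours of $\alpha$ and of $\beta$, the terms in which those common neighbours are distinct from one another and from $\alpha\cup\beta$ involve disjoint edge sets, hence factor and cancel exactly against $\mathbb{E}Y_\alpha\,\mathbb{E}Y_\beta$. What survives is the contribution of the overlapping realisations, which use at least one fewer independent edge: a shared common neighbour (contributing $\asymp np^3$ rather than $n^2p^4$) when $|\alpha\cap\beta|=1$, or a vertex of one pair acting as a common neighbour of the other (contributing $\asymp p^3$ rather than $p^4$) when $|\alpha\cap\beta|=0$. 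The ``exactly one common neighbour'' constraint and the effect of conditioning $c_\alpha$ on the status of $\beta$ (which forbids only $O(1)$ potential common neighbours) contribute merely bounded multiplicative corrections and are handled routinely. This yields $|\operatorname{Cov}(Y_\alpha,Y_\beta)|=O(np^3)$ for $|\alpha\cap\beta|=1$ and $O(p^3)$ for $|\alpha\cap\beta|=0$; multiplying by the $O(n^3)$, resp.\ $O(n^4)$, numbers of such pairs and by $p^2/N^2=\Theta(p^2/n^4)$ gives $O(p^5)$ in each case, so $\operatorname{Var}Q_1(Z_0)=O(p^5)$.

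For $Q_{-1}$ the same scheme applies to $Q_{-1}(Z_0)=\frac{1-p}{N}\sum_\alpha Z_\alpha$ with $Z_\alpha=\mathbf{1}[\alpha\in E]\,\mathbf{1}[c_\alpha=1]$: the extra requirement $\alpha\in E$ is an edge independent of every edge entering $c_\alpha$ or $c_\beta$, so it multiplies $\mathbb{E}Z_\alpha$ by $p$ and each covariance by $p$ (diagonal) or $p^2$ (off-diagonal) relative to the $Y$-case. Consequently the diagonal now contributes $\Theta\big(\tfrac1N\cdot p\cdot np^2\big)=\Theta(p^3/n)$, while the shared-vertex and disjoint contributions become $O(p^5)$, which is $O(p^3/n)$ because $np^2=O(1)$ under $n^\alpha p\to c$ with $\alpha\ge1/2$. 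This gives $\operatorname{Var}Q_{-1}(Z_0)=O(p^3/n)$.

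The main obstacle is the off-diagonal covariance estimate: proving that the factorised part cancels exactly and that the surviving edge-overlap contributions, together with the conditioning corrections, are a full power of $p$ (or of $np$) smaller than the naive bound $\mathbb{E}[Y_\alpha Y_\beta]+\mathbb{E}Y_\alpha\,\mathbb{E}Y_\beta=\Theta(n^2p^4)$. Once this is in place, the rest is elementary moment bookkeeping for $G(n,p)$ under the scaling $n^\alpha p\to c$, $1/2\le\alpha<1$.
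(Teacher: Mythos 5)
Your representation of $Q_{\pm1}$ and the overlap-classified covariance expansion is essentially the route the paper takes: there one writes $Q_{1}(Z_0)=\binom n2^{-1}\sum_{\{s,t\}}p_{st}\sum_{r}Y_r^{s,t}$ with $Y_r^{s,t}$ the indicator that $\{s,t\}$ is a non-edge having $r$ as its unique common neighbour (so your $\mathbf{1}[\alpha\notin E]\,\mathbf{1}[c_\alpha=1]$ is exactly $\sum_r Y_r^{s,t}$), and the variance is bounded by enumerating $\operatorname{Cov}(Y_r^{s,t},Y_w^{u,v})$ over all overlap patterns of the two triples, the dominant block being imported from R\"{o}llin and Ross. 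Your orders for the diagonal, shared-vertex and disjoint contributions, and the downgrading by $p$ and $p^2$ in the $Q_{-1}$ case, all match the paper's.

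Two concrete gaps remain. First, the step you yourself flag as the crux is not merely deferred but misstated: when the realised common neighbours of $\alpha=\{u,v\}$ and $\beta=\{u,w\}$ are distinct, the corresponding events do \emph{not} involve disjoint edge sets, because the uniqueness products $\prod_{l}(1-E_{ul}E_{vl})$ and $\prod_{l}(1-E_{ul}E_{wl})$ both run over the entire star at the shared vertex $u$ (and, for disjoint pairs, both involve the four cross edges joining $\alpha$ to $\beta$). So there is no exact cancellation against $\mathbb{E}Y_\alpha\,\mathbb{E}Y_\beta$; one must bound the residual correlation, which works out to order $n^3p^7$ per shared-vertex pair and $n^2p^7$ per disjoint pair, and these fit your $O(np^3)$ and $O(p^3)$ budgets only because $np^2=O(1)$ in the regime considered. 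This estimate is precisely the content of the paper's case-by-case covariance table and cannot be waved through as a "bounded multiplicative correction." Second, $Z_0$ is the chain conditioned on $X_{A_{ijk}}=x$, and the bound must hold uniformly in $x$ because it feeds an essential supremum; your computation is carried out in the unconditional $G(n,p)$. The paper handles this by setting $p_{st}=0$ for pairs meeting the $O(1)$ conditioned vertex set $V_1$ and splitting the triples into the classes $E_1,E_2,E_3$; since only $O(n)$ of the $O(n^2)$ pairs are affected the extra terms are indeed lower order, but the argument has to be made.
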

\begin{proof}  We follow the line of the proof in \cite{rollin2015local}. Denote by
    $E_{i,j}$  an indicator that there exists an edge between vertices $i$ and $j$, and set  $V_{i}^{k, j}=E_{i,j} E_{i,k}$. Define
      $$Y_{i}^{j, k}= (1-E_{j,k} ) V_{i}^{j, k} \prod_{l \neq i, j, k} (1-V_{l}^{j, k} )$$
      and
      $$p_{jk}=\left\{\!\begin{array}{ll}
        p  &\ \text{ if }j\text{ and }k \text{ are both in } V_{2}, \\
        0  &\ \text{ if }j\text{ or }k \text{ is in } V_{1}.
     \end{array}\right.$$
 Then,  it follows that
    \begin{equation}\label{Q11}
        Q_{1}(Z_0)=\frac{1}{\binom n2} \sum_{\{j, k\}} p_{jk}\sum_{i \neq j, k} Y_{i}^{j, k}.
    \end{equation}
Let $E_{1}$ be the set of all triples $\{r,s,t\}$ with  $s$ or $t$  in $V_{1}$,    $E_{2}$ be the set of all  triples $\{r,s,t\}$ with only $r$ in $V_{1}$, and $E_{3}$ be the set of all triples $\{r,s,t\}$ with all in $V_{2}$.  After some basic calculations,
 \begin{equation*}\label{B4}
     p_{st}{\rm E} Y_{r}^{s, t}=
     \left\{\begin{array}{ll}
       0  & \ \text{ if  } \{r,s,t\} \in E_{1},\\
       0\ \text{or}\ p(1-p) (1-p^{2} )^{n-7}  &\  \text{ if  } \{r,s,t\} \in E_{2},\\
        0\ \text{or}\ p^{3}(1-p) (1-p^{2} )^{n-8}   &\ \text{ if  } \{r,s,t\} \in E_{3}.\\
     \end{array}\right.
 \end{equation*}
For $\operatorname{Var}Q_{1}(Z_0)$, we need to calculate the  covariances $\operatorname{Cov} (Y_{r}^{s, t}, Y_{w}^{u, v} )$ since $Q_{1}$ is a  sum of $Y$'s.  It follows from   (\ref{Q11}) that
 \begin{align*}
     \operatorname{Var}Q_{1}(Z_0)=\bigg(\sum_{\substack{\{r,s,t\} \in E_{3}\\[1pt]\{u,v,w\} \in E_{3}}}+\sum_{\substack{\{r,s,t\} \in E_{2}\\[1pt]\{u,v,w\} \in E_{2}}}+2\sum_{\substack{\{r,s,t\} \in E_{2}\\[1pt]\{u,v,w\} \in E_{3}}}\bigg)p_{s,t}p_{u,v}\operatorname{Cov} (Y_{r}^{s, t}, Y_{w}^{u, v} ).
 \end{align*}
 We deal with each sum separately.

\textbf{Case 1.} $\{r,s,t\}$ and $\{u,v,w\}$ are both in $E_{3}$.

It follows directly from \cite[Lemma 4.12]{rollin2015local} that
$$\sum_{\substack{\{r,s,t\} \in E_{3}\\[1pt]\{u,v,w\} \in E_{3}}} p_{st}p_{uv}\operatorname{Cov} (Y_{r}^{s, t}, Y_{w}^{u, v} )=  O(n^{4}p^{5}). $$

\textbf{Case 2.}  $\{r,s,t\}$ and $\{u,v,w\}$ are both in $E_{2}$.

If $\{r,s,t\} = \{u,v,w\}$, then $$p_{st}p_{uv}\operatorname{Cov} (Y_{r}^{s, t}, Y_{w}^{u, v} )\leq p^{2}(1-p) (1-p^{2} )^{n-7} (1-(1-p) (1-p^{2} )^{n-7} ),$$
 which contributes $O(n^{2})$ equal covariance terms.

If   $r=w$, then
$$p_{st}p_{uv}\operatorname{Cov} (Y_{r}^{s, t}, Y_{w}^{u, v} )\leq p^{2}(1-p)^{2} (1-p^{2} )^{2n-18}(4p^{3}-7p^{4}+4p^{6}-p^{8}),$$ which
contribute $O(n^{4})$ equal covariance terms.

If $\sharp\{r,s,t\}\cap\{u,v,w\} =1$ and $\sharp \{s,t\}\cap\{u,v\} =1$, then
$$p_{st}p_{uv}\operatorname{Cov} (Y_{r}^{s, t}, Y_{w}^{u, v} )\leq p^{2}(1-p)^{2} ((1-p)^{n-8} (1+p-p^{2} )^{n-8}- (1-p^{2} )^{2 n-14} ),$$ which
contribute $O(n^{3})$ equal covariance terms.

If $r = w$ and $\sharp \{r,s,t\}\cap\{w,u,v\} =2$, then
$$p_{st}p_{uv}\operatorname{Cov} (Y_{r}^{s, t}, Y_{w}^{u, v} )\leq p^{2}(1-p)^{2}((1-2p^{2} + p^{3})^{n-8} - (1 - p^{2})^{2n-14}),$$
which contributes $O(n^{3})$ equal covariance terms.

If $r\neq w$ and $ \{s,t\}=\{u,v\} $, then
$$p_{st}p_{uv}\operatorname{Cov} (Y_{r}^{s, t}, Y_{w}^{u, v} )=0.$$

If $\sharp \{r,s,t\}\cap \{u,v,w\} =0$, then
$$p_{st}p_{uv}\operatorname{Cov} (Y_{r}^{s, t}, Y_{w}^{u, v} )\leq p^{2}(1-p)^{2} (1-p^{2} )^{2n-16}(4p^{3}-7p^{4}+4p^{6}-p^{8}),$$
which contributes $O(n^{4})$ equal covariance terms.

In other cases,
$$p_{st}p_{uv}\operatorname{Cov} (Y_{r}^{s, t}, Y_{w}^{u, v} )=0.$$

\textbf{Case 3.}   $\{r,s,t\}$ is in $E_{2}$ and $\{u,v,w\}$ is in $E_{3}$.

If $\sharp \{r,s,t\}\cap \{u,v,w\} =0$, then
$$p_{st}p_{uv}\operatorname{Cov} (Y_{r}^{s, t}, Y_{w}^{u, v} )\leq p^{4}(1-p)^{2} (1-p^{2} )^{2n-18}(4p^{3}-7p^{4}+4p^{6}-p^{8}),$$
which contributes $O(n^{5})$ equal covariance terms.

If   $s=w$ or $t=w$, then
$$p_{st}p_{uv}\operatorname{Cov} (Y_{r}^{s, t}, Y_{w}^{u, v} )\leq p^{4}(1-p)^{2} (1-p^{2} )^{2 n-17}(-2p+4p^{2}-3p^{4}+p^{6}),$$
which contributes $O(n^{4})$ equal covariance terms.

If  $\sharp\{r,s,t\}\cap\{u,v,w\} =1$, $\sharp \{s,t\}\cap\{u,v\} =1$, and $\sharp\{s,t\}\cap\{u,v\}=1$, then
$$p_{st}p_{uv}\operatorname{Cov} (Y_{r}^{s, t}, Y_{w}^{u, v} )\leq p^{4}(1-p)^{2} ((1-p)^{n-8} (1+p-p^{2} )^{n-9}- (1-p^{2} )^{2 n-15} ),$$
which contributes $O(n^{4})$ equal covariance terms.

If $r\neq w$ and $ \{s,t\}=\{u,v\} $, then
$$p_{st}p_{uv}\operatorname{Cov} (Y_{r}^{s, t}, Y_{w}^{u, v} )=0.$$

In other cases,
$$p_{st}p_{uv}\operatorname{Cov} (Y_{r}^{s, t}, Y_{w}^{u, v} )=0.$$

Combining   Cases  1--3, we prove  (\ref{B1}).

A similar argument can establish (\ref{B2}) since $Q_{-1}(Z_0)$ can be written as
$$Q_{-1}(Z_0)=\frac{1}{\binom n2} \sum_{\{j, k\}} \sum_{i \neq j, k}(1-p_{jk}) E_{j, k} V_{i}^{j, k} \prod_{l \neq i, j, k} (1-V_{l}^{j, k} ).$$
The proof is complete.
\end{proof}
\begin{lemma}\label{LB3}
  We have
    \begin{align*}\label{B3}
        &{\rm E}|Q_{1,1}(Z_0)-Q_{1}(Z_0)^{2}|\leq  \frac{2p^{4}(1-p)(1-p^{2})^{n-8}}{n-1},\\
        &{\rm E}|Q_{-1,-1}(Z_0)-Q_{-1}(Z_0)^{2}|\leq \frac{2p^{3}(1-p)^{2}(1-p^{2})^{n-8}}{n-1}.
    \end{align*}
\end{lemma}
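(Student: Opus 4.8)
The plan is to use the Markov property to turn the two-step quantity $Q_{m,m}$ into an average over the single vertex-pair resampled at the first step. Conditioning on step $1$: if the pair $\{j,k\}$ is chosen (probability $1/\binom n2$), then on the event $\{W_1=W_0+1\}$ the edge $\{j,k\}$ was necessarily absent in $Z_0$ with exactly one cherry apex — i.e. $\sum_i Y_i^{j,k}(Z_0)=1$ — and the resampled graph is deterministically $\sigma_{jk}Z_0:=Z_0\cup\{j,k\}$. Hence, by (\ref{Q11}), $Q_{1,1}(Z_0)$ equals $\frac{1}{\binom n2}\sum_{\{j,k\}}p_{jk}\big[\sum_i Y_i^{j,k}(Z_0)\big]Q_1(\sigma_{jk}Z_0)$, while $Q_1(Z_0)^2$ equals the same sum with $Q_1(\sigma_{jk}Z_0)$ replaced by $Q_1(Z_0)$, so that
\begin{equation*}
Q_{1,1}(Z_0)-Q_1(Z_0)^2=\frac{1}{\binom n2}\sum_{\{j,k\}}p_{jk}\Big[\sum_i Y_i^{j,k}(Z_0)\Big]\big(Q_1(\sigma_{jk}Z_0)-Q_1(Z_0)\big).
\end{equation*}

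I would then expand the perturbation $Q_1(\sigma_{jk}Z_0)-Q_1(Z_0)=\frac{1}{\binom n2}\sum_{\{u,v\}}p_{uv}\big(\sum_{i'}Y_{i'}^{u,v}(\sigma_{jk}Z_0)-\sum_{i'}Y_{i'}^{u,v}(Z_0)\big)$, which is supported on pairs $\{u,v\}$ meeting $\{j,k\}$. The main term is the \emph{diagonal} one, $\{u,v\}=\{j,k\}$: since the edge $\{j,k\}$ is now present we have $\sum_{i'}Y_{i'}^{j,k}(\sigma_{jk}Z_0)=0$, and because $\sum_i Y_i^{j,k}(Z_0)$ is $\{0,1\}$-valued this part equals $-\frac{1}{{\binom{n}{2}}^{2}}\sum_{\{j,k\}}p_{jk}^2\big[\sum_i Y_i^{j,k}(Z_0)\big]\le 0$. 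Taking expectations, using $\mathbb{E}Y_i^{j,k}=(1-p)p^2(1-p^2)^{n-3}$, the at most $\binom n2$ admissible pairs and the $n-2$ choices of apex, this contributes $-\frac{2(n-2)p^4(1-p)(1-p^2)^{n-3}}{n(n-1)}$, whose modulus is at most $\frac{2p^4(1-p)(1-p^2)^{n-8}}{n-1}$, i.e. the claimed bound. For the $m=-1$ statement one runs the identical argument with $Y_i^{j,k}$ replaced by $\tilde Y_i^{j,k}=E_{j,k}V_i^{j,k}\prod_{l\ne i,j,k}(1-V_l^{j,k})$, edge-addition by edge-removal, and $p_{jk}$ by $1-p_{jk}$; the diagonal term then yields $-\frac{2(n-2)p^3(1-p)^2(1-p^2)^{n-3}}{n(n-1)}$, dominated by $\frac{2p^3(1-p)^2(1-p^2)^{n-8}}{n-1}$.

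The remaining task — and the hard part — is to show that the \emph{off-diagonal} contribution, from pairs $\{u,v\}$ sharing exactly one vertex with $\{j,k\}$, does not overwhelm the bound. Here $\sum_{i'}Y_{i'}^{u,v}(\sigma_{jk}Z_0)-\sum_{i'}Y_{i'}^{u,v}(Z_0)$ can be written down explicitly: flipping the edge $\{j,k\}$ can only turn the vertex of $\{j,k\}$ lying outside $\{u,v\}$ into a new cherry apex (or delete an existing one) for $\{u,v\}$, and this difference is nonzero only when that vertex is adjacent in $Z_0$ to the vertex of $\{u,v\}$ lying outside $\{j,k\}$. One must then combine the many edges forced absent by the conditioning $\sum_i Y_i^{j,k}(Z_0)=1$ with the conditional law of the number of cherry apexes at $\{u,v\}$ to check that, after multiplying by the weights $p_{jk}p_{uv}/{\binom{n}{2}}^{2}$ and summing, these terms stay below the stated right-hand side; this careful bookkeeping of the joint edge law is where the real work lies. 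Once the off-diagonal terms are controlled, writing $Q_{1,1}(Z_0)-Q_1(Z_0)^2=(\text{diagonal})+(\text{off-diagonal})$, applying the triangle inequality, and taking expectations finishes the proof, and the $m=-1$ case is entirely parallel.
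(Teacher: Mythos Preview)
Your decomposition into diagonal and off-diagonal pieces is correct and is in fact more careful than the paper's own argument. The paper simply \emph{asserts} the identity
\[
Q_{1,1}(Z_0)= \sum_{\{s, t\}} \frac{p_{st}}{\binom n2}\Big[\sum_{r} Y_{r}^{s, t}\Big] \sum_{\{u, v\} \neq\{s, t\}}\frac{p_{uv}}{\binom n2}\sum_{w} Y_{w}^{u, v},
\]
with every $Y$ evaluated at $Z_0$; this amounts to declaring the off-diagonal perturbation $Q_1(\sigma_{st}Z_0)-Q_1(Z_0)+\tfrac{p_{st}}{\binom n2}\sum_r Y_r^{s,t}$ to be identically zero, after which subtracting $Q_1(Z_0)^2$ leaves only the diagonal and the stated bound drops out in one line. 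So your ``hard part'' is precisely what the paper skips.

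Your worry is not only legitimate, it cannot be resolved at the precision the lemma claims. For $\{u,v\}=\{s,v\}$ sharing one vertex with $\{s,t\}$, on the event $\sum_r Y_r^{s,t}=1$ the perturbation $\Delta Y^{u,v}$ is nonzero exactly when $E_{t,v}=1$ (probability $\asymp p$), and its conditional mean equals $\mathbb P(N_0{=}0)-\mathbb P(N_0{=}1)$ with $N_0$ the number of $\{s,v\}$-apexes in $Z_0$, a quantity bounded away from zero in the regime $np^2=O(1)$ of Theorem~\ref{T2.4}. Summing over the $\asymp n$ admissible $\{u,v\}$ and then over $\{s,t\}$ gives $\mathbb E|Q_{1,1}-Q_1^2|\gtrsim p^5$, which dominates the asserted $2p^4(1-p)(1-p^2)^{n-8}/(n-1)\asymp p^4/n$ as soon as $np\to\infty$. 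Thus the first inequality of the lemma appears to be stated too sharply, and no amount of bookkeeping will push the off-diagonal below $p^4/n$.

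This is not fatal for the paper's application: the lemma is only used via Lemma~\ref{LB1}, where what is needed is $\mathbb E|Q_{1,1}-Q_1^2|/q_1^2=O(n^{-3}p^{-3})$, and the honest bound $O(p^5)$ yields $p^5/q_1^2\asymp (n^2p)^{-1}=np^2\cdot n^{-3}p^{-3}=O(n^{-3}p^{-3})$. For the $m=-1$ case the situation is genuinely better: the conditioning $\sum_r\tilde Y_r^{s,t}=1$ forces $E_{v,s}E_{v,t}=0$ for every $v\neq r_0$, so the only off-diagonal pairs with $\Delta\tilde Y\neq 0$ are $\{s,r_0\}$ and $\{t,r_0\}$, and the off-diagonal really is $O(p^3/n)$. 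The correct fix is therefore to relax the first bound to $O(p^5)$; your plan is sound, but the target you set yourself for the $+1$ case is sharper than what actually holds.
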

\begin{proof}
It easily follows that
     $$Q_{1,1}(Z_0)= \sum_{\{s, t\}} \frac{p_{st}}{\binom n2} \sum_{r \neq s, t} Y_{r}^{s, t} \sum_{\{u, v\} \neq\{s, t\}}\frac{p_{uv}}{\binom n2} \sum_{w \neq u, v} Y_{w}^{u, v}.$$
  For fixed $\{j,k\}$, since at most one $i\in V$ such that $\{Y_{i}^{jk}\}$ is possibly non-zero,
     \begin{equation}\label{B8}
         {\rm E} |Q_{1,1}(Z_0)-Q_{1}(Z_0)^{2} |={\rm E} \sum_{\{s, t\}}\frac{p_{st}^{2}}{\binom n2 ^{2}} \sum_{r \neq s, t} Y_{r}^{s, t}\leq\frac{p}{\binom n2 } q_{1} ,
     \end{equation}
    By the same token, we get
    \begin{equation}\label{B9}
        {\rm E} |Q_{-1,-1}(Z_0)-Q_{-1}(Z_0)^{2} |\leq \frac{1-p}{\binom n2 } q_{1}.
    \end{equation}
Simple calculations yield
\begin{equation}\label{B10}
 q_{1}={\rm E} Q_{1}(Z_0)\leq (n-2) p^{3}(1-p) (1-p^{2} )^{n-8}.
\end{equation}
Substituting (\ref{B10}) into (\ref{B8}) and (\ref{B9}), we see that Lemma \ref{LB3} holds.
\end{proof}

\end{appendix}

\end{document}